
\documentclass[11pt]{amsart}
\usepackage{latexsym, amsfonts, amsmath, amssymb, amsthm, cite, tabls, paralist, xcolor, comment, enumerate, enumitem}
\usepackage[pdftex,citecolor=green,linkcolor=red]{hyperref}


\newtheorem{theorem}{Theorem}[section]
\newtheorem{lemma}[theorem]{Lemma}
\newtheorem{proposition}[theorem]{Proposition}

\newtheorem{corollary}[theorem]{Corollary}

\newcommand{\nor}[1]{\lVert #1 \rVert}
\newcommand{\abs}[1]{\left\lvert #1 \right\rvert}

\theoremstyle{remark}

\theoremstyle{definition}
\newtheorem{definition}[theorem]{Definition}

\theoremstyle{remark}

\numberwithin{equation}{section}


\newcommand{\PMod}[1]{\,(\textnormal{mod}\,#1)}
\newcommand{\Z}{\mathbb{Z}}

\def\Vol{\mathrm{Vol}}
\newcommand{\R}{\mathbb{R}}
\newcommand{\F}{\mathbb{F}}
\newcommand{\N}{\mathbb{N}}
\newcommand{\C}{\mathbb{C}}
\newcommand\p{{\mathcal{P}}}
\newcommand\D{{\mathcal{D}}}
\newcommand\cH{{\mathcal{H}}}
\newcommand{\E}{\mathbb{E}}
\newcommand{\ve}{\mathbf}

\newcommand{\ee}{\varepsilon}
\pagestyle{headings}

\newcommand{\CB}{\mathcal{B}}

\newcommand{\CL}{\mathcal{L}}

\newcommand{\CX}{\mathcal{X}}
\newcommand{\FS}{\mathfrak{S}}

\newcommand{\Prime}{\mathbb{P}}

\usepackage{graphicx}
\usepackage{mathrsfs}
\usepackage{varioref}
\usepackage{enumerate}

\textwidth=15.5cm

\textheight=23cm

\parindent=16pt

\oddsidemargin=0cm

\evensidemargin=0cm

\topmargin=-0.5cm

\begin{document}

\title[A transference principle for linear equations]{A transference principle for systems of linear equations, and applications to almost twin primes}

\author{Pierre-Yves Bienvenu}
\address{Institut f{\"u}r Analysis und Zahlentheorie\\TU Graz\\Kopernikusgasse 24\\8010 Graz, Austria}
\email{bienvenu@tugraz.at}

\author{Xuancheng Shao}
\address{Department of Mathematics, University of Kentucky, Lexington, KY, 40506, USA}
\email{xuancheng.shao@uky.edu}

\author{Joni Ter\"av\"ainen}
\address{Mathematical Institute\\ Radcliffe Observatory Quarter\\ Woodstock Road\\ Oxford OX2 6GG \\ United Kingdom}
\email{joni.teravainen@maths.ox.ac.uk}

\begin{abstract}
The transference principle of Green and Tao enabled various authors to transfer Szemer\'edi's theorem on
long arithmetic progressions in dense sets to various sparse sets of integers, mostly sparse sets of primes. In this paper, 
we provide a transference principle which applies to general affine-linear configurations of finite complexity. 

We illustrate the broad applicability of our transference principle with the case of \emph{almost twin primes}, by which we mean either Chen primes or
``bounded gap primes'', as well as with the case of primes of the form $x^2+y^2+1$. Thus,
we show that in these sets of primes the existence of solutions to finite complexity systems of linear equations is determined
by natural local conditions. These applications rely on a recent work of the last two authors
on Bombieri--Vinogradov type estimates for nilsequences.
\end{abstract}

\maketitle

\section{Introduction}
\subsection{The problem and its background}
Green and Tao famously proved \cite{GT-kAPinPrime} that the primes contain arbitrarily
long arithmetic progressions. Their proof introduced an influential transference principle, stating that if a set of integers is dense inside a  \emph{pseudorandom} set, then it contains arbitrarily long arithmetic progressions. This is called a transference principle, since it transfers Szemer\'edi's theorem, which states that any dense subset of $\mathbb{Z}$ contains arbitrarily long arithmetic progressions, to a sparse setting. In fact, the proof of Green and Tao relied on Szemer\'edi's theorem as a black box.

More generally, given any admissible\footnote{\label{foot0}We say that $\Psi=(\psi_1,\ldots,\psi_t)$ is \textit{admissible} if $(\psi_i(\mathbf{n}))_{\mathbf{n}\in \mathbb{Z}^d}$ has no fixed prime divisor for each $i\in[t]$.} affine-linear map $\Psi : \Z^d\rightarrow\Z^t$, and a subset $\mathcal{P}$ of the primes, one may ask whether $\mathcal{P}^t$ contains a tuple of the form $\Psi(\mathbf{n})$ with $\mathbf{n}\in\Z^d$. Since the image of an affine-linear map may always be realised as the kernel of another affine-linear map and vice-versa, this may be formulated as the problem of determining which linear systems of equations can be solved inside $\mathcal{P}$. 

Since the Green--Tao theorem, a lot of research has been devoted to this question. Note that $k$-term arithmetic progressions correspond to the map $\Psi(n,d)=(n,n+d,\ldots,n+(k-1)d)$, so this case is handled by the Green--Tao theorem, which actually holds for dense subsets of the primes (or even not too sparse subsets, see \cite{Rimanic}). Further, since Szemer\'edi's theorem holds for any given translation-invariant linear configuration in place of arithmetic progressions (that is, for homogeneous linear maps $\Psi$
 such that $(1,\ldots,1)\in\Psi(\Z^d)$),
the Green--Tao theorem also holds for these linear configurations.

Regarding general linear configurations, under the mere assumption that $\Psi=(\psi_1,\ldots,\psi_t)$ has finite complexity, (that is, no two of the forms $\psi_i$ are affinely related), Green and Tao \cite{GT-linear-equations} provided
a complete answer in the case where $\mathcal{P}$ is the full set $\mathbb{P}$ of primes, in fact giving an asymptotic formula for the number of
$\mathbf{n}\in [N]^d$ for which $\Psi(\mathbf{n})\in \mathcal{P}^t$ as $N\rightarrow \infty$.

Regarding subsets of the primes, it is known that a number of interesting sparse subsets of the primes contain arbitrarily long arithmetic progressions (or again, any given translation-invariant linear configuration). Indeed, the \textit{Chen primes} 
\begin{align*}
\mathcal{P}_{\textnormal{Chen}}:=\{p\in \mathbb{P}:\,\, p+2\in P_2\},    
\end{align*}
where $P_2$ is the set of integers which have at most two prime factors (counted with multiplicity),
have this property by \cite{kAP-Chen}, and the \emph{bounded gap primes}
\begin{align*}
\mathcal{P}_{\textnormal{bdd},H}:= \{n\in \mathbb{P}:\,\, |[n,n+H]\cap \mathbb{P}|\geq 2\}  \end{align*}
for large $H$  have this property by  \cite{kAP-Maynard,kAP-Maynard2}.  Primes of the form $x^2+y^2+1$ by \cite{kAP-twoSquaresPlusOne} have this property as well and for any $k$, there exists $c_k>1$ such that
for any $c\in [1,c_k)$ the set $\mathbb{P}\cap\{\lfloor n^c\rfloor : n\in\N\}$ of Piatetski--Shapiro primes contains progressions of length $k$ by \cite{li2019greentao}

However, very little is known when $\Psi$ is not translation-invariant and simultaneously $\mathcal{P}$ is not the full set of
the primes. When $\mathcal{P}\subset \mathbb{P}$ is the (dense) set of the shifted squarefree primes (i.e. primes $p$ such that $p-1$ is squarefree), for any finite complexity $\Psi$, an asymptotic for the number of
$\mathbf{n}\in [N]^d$ for which $\Psi(\mathbf{n})\in \mathcal{P}$ was proven by the first author \cite{bienvenu}.
When it comes to non-translation-invariant configurations in subsets of the primes, previous research has concentrated on the ternary Goldbach system, that is, 
the affine-linear map
$\Psi_N(n,m)=(n,m,N-n-m)$.
The subsets of the primes where it was studied include 
 subsets of relative density above a certain threshold \cite{Li-Pan,shaoDensity}, the set of primes of the form $x^2+y^2+1$
 \cite{joni}, the set of primes in a given Chebotarev class \cite{kane},
 the set of Fouvry--Iwaniec primes  $x^2+p^2$ with $p$ prime \cite{grimmelt},
and  the set of primes admitting a given primitive root \cite{FKS}.
More relevantly for the present study, Matom\"aki and the second author \cite{matomaki-shao} showed that any sufficiently large odd integer (resp. integer congruent to three modulo six) is a sum of three bounded gap primes (resp. three Chen primes). Both of these types of primes have properties akin to those of twin primes, and are therefore referred to as \emph{almost twin primes}.

We mention that all of the papers \cite{matomaki-shao,joni,grimmelt,kane,FKS,shaoDensity} rely on classical Fourier analysis, which is considerably simpler than higher order Fourier analysis, and hence the proofs do not adapt to any systems $\Psi$ of complexity at least $2$. The papers \cite{kAP-Chen,kAP-Maynard,kAP-Maynard2,kAP-twoSquaresPlusOne,li2019greentao}, in turn, all use the Green--Tao transference principle, and hence the proofs do not adapt to any non-translation-invariant configurations $\Psi$. Our main result handles the case of arbitrary finite complexity systems $\Psi$ when $\mathcal{P}$ is the set of almost twin primes. 


\subsection{Results on linear equations}

Now we state our results on linear equations in almost twin primes precisely.  
Let $\mathcal{H} = \{h_1,\ldots,h_m\}$ be an admissible $m$-tuple: for every prime $p$, there exists $n\in\N$ such that
$\prod_{i\in[m]}(n+h_i)\not\equiv 0\PMod p$.
Let $\mathcal{P}_{\mathcal{H}}:=\{n\in \mathbb{N} : \abs{(n+\mathcal{H})\cap\mathbb{P}}\geq 2\}$.
Note that $\mathcal{P}_{\mathcal{H}}$ is actually not a subset of the primes; in fact $\mathcal{P}_{\textnormal{bdd},H}=\mathbb{P}\cap \bigcup_{\mathcal{H}\subset [0,H]}\mathcal{P}_{\mathcal{H}}$. Define the weighted indicator functions of the Chen primes and $\mathcal{P}_{\mathcal{H}}$ by
\begin{align*}
\theta_{1}(n):=(\log n)^{2}1_{\mathcal{P}_{\textnormal{Chen}}}(n)1_{p\mid n(n+2)\implies p\geq n^{1/10}},\quad \theta_2(n):=(\log n)^{m}1_{\mathcal{P}_{\mathcal{H}}}(n)1_{p\mid \prod_{i=1}^{m}(n+h_i)\implies p\geq n^{\rho}},    
\end{align*}
where $m=\abs{\mathcal{H}}\geq 2$ and $\rho\in (0,1)$.

Then we know by Chen's theorem \cite{chen} and  Maynard's theorem \cite{MaynardII}, upon assuming that $m$ is large enough and $\rho$ is small enough, that $\sum_{n\leq N}\theta_i(n)\gg N$ for $i\in\{1,2\}$ (and we have upper bounds of the same order of magnitude by Selberg's sieve). Throughout this paper, we fix such $m,\rho$, and we also fix an admissible $m$-tuple $\mathcal{H}$ in the definition of $\theta_2$.

\begin{theorem}[Arbitrary linear configurations weighted by almost twin primes]
\label{th:linEqTwinPrimes}
Let $i\in\{1,2\}$.
Let $\eta>0$, $N,d,t,L\geq 1$, and let $\Psi=(\psi_1,\ldots,\psi_t):\Z^d\rightarrow\Z^t$ be a system of affine-linear forms of finite complexity whose homogeneous parts have coefficients bounded in modulus by $L$.

Then there exists a constant $C_i(\Psi)\geq 0$ such that the following holds. 
Let $K\subset [-N,N]^d$ be a convex body satisfying
$\mathrm{Vol}(K)\geq \eta N^d$ and
 $\Psi(K)\subset [1,N]^t$.
 Then, for $N\geq N_0(L,\eta,d,t)$, we have
 \begin{equation}
 \label{eq:linEqTwinPrimes}
 \sum_{\mathbf{n}\in K\cap \Z^d}\prod_{j\in [t]}\theta_i(\psi_j(\mathbf{n}))\gg_{L,\eta,d,t} C_i(\Psi)\Vol(K).
 \end{equation} 
 Further $C_i(\Psi)>0$ unless there is an obstruction modulo some prime $p$.
 More precisely, $C_1(\Psi)>0$ as soon as for every prime $p$ there exists $\mathbf{n}\in\Z^d$
 such that $\prod_{i\in[t]}\psi_i(\mathbf{n})(\psi_i(\mathbf{n})+2)\not\equiv 0\PMod p$
 and $C_2(\Psi)>0$ as soon as for every prime $p$ there exists $\mathbf{n}\in\Z^d$
 such that $\prod_{i\in[t]}\prod_{j\in [m]}(\psi_i(\mathbf{n})+h_j)\not\equiv 0\PMod p$.
 \end{theorem}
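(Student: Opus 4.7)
The plan is to deduce \eqref{eq:linEqTwinPrimes} from a general transference principle for finite-complexity systems (established earlier in the paper), by verifying its hypotheses for the almost twin prime weights $\theta_i$. Unlike the Green--Tao transference for arithmetic progressions, which reduces matters to Szemer\'edi's theorem, here the transference must produce an asymptotic with the correct singular series $C_i(\Psi)$ because $\Psi$ is not assumed translation-invariant. The key inputs needed are (i) a pseudorandom majorant for $\theta_i$ and (ii) the Bombieri--Vinogradov type estimate for nilsequences of Shao--Ter\"av\"ainen mentioned in the abstract.

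First I would construct a Selberg-type majorant $\nu_i \geq C^{-1}\theta_i$ on $[N]$, using Chen's sieve setup for $i=1$ and the Maynard--Tao sieve for $i=2$, and verify that $\nu_i$ satisfies the pseudorandomness conditions required by the transference principle: the linear forms and correlation conditions relative to $\Psi$, together with a Gowers-norm bound $\|\nu_i - 1\|_{U^s} = o(1)$ at the complexity $s = s(\Psi)$. Next, the transference principle decomposes $\theta_i = \theta_i^\flat + \theta_i^\sharp$, with $\theta_i^\flat$ pointwise bounded and $\theta_i^\sharp$ Gowers-uniform; the generalized von Neumann theorem then discards the $\theta_i^\sharp$ contribution from the left-hand side of \eqref{eq:linEqTwinPrimes}, leaving $\sum_{\mathbf{n}\in K \cap \mathbb{Z}^d}\prod_{j\in [t]}\theta_i^\flat(\psi_j(\mathbf{n}))$ to analyse. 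Because $\theta_i^\flat$ is pointwise $O(1)$, this sum can now be tackled by higher-order Fourier analysis.

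The $U^s$-inverse theorem of Green--Tao--Ziegler reduces the evaluation of the reduced sum to correlations of $\theta_i$ itself against Lipschitz nilsequences along arithmetic progressions. This is precisely where the Bombieri--Vinogradov estimate for nilsequences enters: it shows that almost twin primes equidistribute inside such nilsequences with the locally predicted density. Assembling the local contributions produces the singular series $C_i(\Psi) = \prod_p \beta_p$, and a standard sieve-theoretic unfolding identifies $\beta_p > 0$ with the solvability condition at $p$ stated in the theorem, giving the positivity claim. The main obstacle is this third step: the equidistribution of almost twin primes in nilsequences is delicate and requires the full strength of the Bombieri--Vinogradov machinery, particularly to obtain the uniformity in moduli needed to handle the non-translation-invariant affine forms $\psi_j$; verifying the Gowers-norm bound on $\nu_i-1$ at arbitrary complexity $s$ is a secondary, but nontrivial, technical hurdle.
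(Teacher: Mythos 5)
Your high-level outline --- pseudorandom majorant plus transference plus nilsequence Bombieri--Vinogradov --- names the right ingredients, but the argument you sketch takes a genuinely different (and incomplete) route at the most delicate point. Your plan is to produce a Green--Tao-style dense model $\theta_i^\flat$ that is merely pointwise \emph{bounded}, kill the Gowers-uniform error with the generalized von Neumann theorem, and then evaluate the remaining count $\sum_{\mathbf{n}\in K}\prod_j \theta_i^\flat(\psi_j(\mathbf{n}))$ by a further round of higher-order Fourier analysis. For non-translation-invariant $\Psi$ that last step is exactly where the difficulty lies: a bounded model need not be dense on higher-order Bohr sets, so you cannot conclude the $\Psi$-count is positive, and Szemer\'edi-type theorems are unavailable. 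Appealing to the $U^s$-inverse theorem does not resolve this --- evaluating a $\Psi$-count of a bounded function would require an arithmetic regularity lemma together with a counting lemma on nil-factors, plus control of the model's distribution on nil-Bohr sets; you supply none of these, and the paper deliberately avoids the counting lemma route.

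The paper's key idea, absent from your proposal, is to build a dense model that is pointwise \emph{lower} bounded by $(1-\rho)\delta$ (Proposition \ref{prop:crux}). This is achieved by combining the Dodos--Kanellopoulos dense model theorem, the weak arithmetic regularity lemma, and a verified hypothesis that $\theta_i$ (after $W$-tricking) is ``dense in higher-order Bohr sets'' (hypothesis (ii) of Theorem \ref{th:transference}); it is precisely in verifying this hypothesis --- Propositions \ref{prop:twin-prime-nil} and \ref{prop:chen-prime-nil} reduced via a factorization theorem to the equidistributed Propositions \ref{prop:twin-prime-eq-nil}, \ref{prop:chen-prime-eq-nil} --- that the Bombieri--Vinogradov estimates for nilsequences enter. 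Once the model is pointwise lower bounded, the all-model term trivially contributes $\geq 0.99\delta^t \mathrm{Vol}(K)$ and no further Fourier analysis is needed. So the nilsequence input is used \emph{before} the decomposition to certify a property of $\theta_i$, not afterwards to compute a count. Two additional discrepancies: you invoke the correlation conditions of Green--Tao, which the paper explicitly dispenses with (this is one point of the Dodos--Kanellopoulos input); and you omit the $W$-trick (Proposition \ref{prop: wtrick}), without which $\theta_i$ is biased at small primes, admits no pseudorandom majorant, and the singular series $C_i(\Psi)=\prod_p\beta_p$ never materializes.
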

 Note that the hypotheses imply that the non-homogeneous coefficients of $\Psi$ are bounded in modulus by $(dL+1)N$.
 It turns out that $C_i(\Psi)\gg_{d,t,L,i}1$ whenever $C_i(\Psi)>0$; therefore the right-hand side of
 the estimate \eqref{eq:linEqTwinPrimes} is $\gg_{d,t,L,i} \Vol(K)$ whenever it is not 0.
 
 We can also obtain an analogous result for primes of the form $x^2+y^2+1$. Let
\begin{align}\label{equ21}
 \theta_3(n):= (\log(2n))^{3/2}1_{\mathbb{P}}(n)1_{n=x^2+y^2+1, x,y\in \Z}  
\end{align}
be the weighted indicator function of primes of the form $x^2+y^2+1$. By a result of Iwaniec \cite{iwaniec-quadraticform}, we have $\sum_{n\leq N}\theta_3(n)\gg N$ (and we have an upper bound of the same order of magnitude from Selberg's sieve).  
 
 \begin{theorem}[Arbitrary linear configurations weighted by primes of the form $x^2+y^2+1$] \label{th: Linnik} Theorem \ref{th:linEqTwinPrimes} continues to hold with $\theta_3$ in place of $\theta_i$. Moreover, $C_3(\Psi)>0$ as soon as for every prime $p$ there exists $\mathbf{n}\in \mathbb{Z}^d$ such that $\prod_{i\in [t]}\psi_i(\mathbf{n})(\psi_i(\mathbf{n})+a(p))\not \equiv 0\PMod p$, where $a(p)=-1$ if $p\equiv -1 \PMod 4$ and $a(p)=0$ otherwise.
 \end{theorem}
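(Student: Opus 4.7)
The plan is to deduce Theorem~\ref{th: Linnik} from the abstract transference principle underlying Theorem~\ref{th:linEqTwinPrimes}, by checking that the weight $\theta_3$ satisfies the same two structural hypotheses that the paper verifies for $\theta_1$ and $\theta_2$. Namely, I would verify that (i) after the usual $W$-trick removing small-prime biases of $\theta_3$, the resulting weight $\widetilde{\theta}_3$ is pointwise majorised by a pseudorandom measure $\nu_3$ satisfying the linear forms condition associated with $\Psi$, and (ii) $\|\widetilde{\theta}_3 - 1\|_{U^{s+1}[N]} = o(1)$ for an integer $s$ depending only on the complexity parameters $d,t,L$. Once (i) and (ii) are in hand, the lower bound \eqref{eq:linEqTwinPrimes}, the fact that a positive $C_3(\Psi)$ is always of size $\gg_{d,t,L} 1$, and the specific form of the local criterion all fall out of the same machinery as in Theorem~\ref{th:linEqTwinPrimes}; the only work genuinely new to $\theta_3$ is the local factor computation at the end.

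For (i), Iwaniec's sieve treatment \cite{iwaniec-quadraticform} already constructs a Selberg-type upper bound majorant for the characteristic function of primes of the form $x^2+y^2+1$, tailored to the half-dimensional sieve needed there. After $W$-tricking and normalising this produces a $\nu_3$ of mean $1+o(1)$ with $\widetilde{\theta}_3 \leq C\nu_3$. Verifying the linear forms condition for $\nu_3$ along $\Psi$ then follows the scheme of \cite{GT-linear-equations}: expand the Selberg weights, reduce to counting solutions of congruences modulo the product of the sieve moduli, and control the resulting multiplicative sums, whose local factors involve sum-of-two-squares densities but behave no worse than the corresponding factors in the classical majorant of the primes.

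The main obstacle is (ii). By the inverse theorem for the Gowers $U^{s+1}$-norm, showing $\|\widetilde{\theta}_3 - 1\|_{U^{s+1}[N]} = o(1)$ is equivalent to proving
\begin{equation*}
\sum_{n \leq N} \bigl(\widetilde{\theta}_3(n) - 1\bigr)\, \overline{F(g(n)\Gamma)} = o(N)
\end{equation*}
uniformly in all degree-$s$ polynomial nilsequences $F(g(\cdot)\Gamma)$ on nilmanifolds of bounded complexity. This is precisely the regime of the recent Bombieri--Vinogradov theorem for nilsequences proved by the last two authors. Concretely, one would decompose $\theta_3$ into type~I and type~II bilinear sums via a Heath-Brown or Vaughan identity; the type~I terms are dispatched by nilsequence equidistribution along arithmetic progressions, weighted by the representation function $r(m) = \#\{(x,y) : x^2+y^2=m\}$, and the type~II terms are absorbed by the Bombieri--Vinogradov nilsequence input.

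Finally, I would verify the criterion for $C_3(\Psi)>0$. The local factor at a prime $p$ is a product over $i\in[t]$ of the $p$-adic probability that $\psi_i(\mathbf{n})$ lies in a residue class attainable by a prime of the form $x^2+y^2+1$. For $p=2$ or $p \equiv 1 \PMod 4$ every non-zero residue class admits many representations as $x^2+y^2+1 \PMod p$, and primality forces only $\psi_i(\mathbf{n}) \not\equiv 0 \PMod p$; this matches $a(p)=0$. For $p \equiv -1 \PMod 4$, by contrast, $p \mid x^2+y^2$ forces $p \mid x$ and $p \mid y$, so the class $\psi_i(\mathbf{n}) \equiv 1 \PMod p$ would require $p^2 \mid \psi_i(\mathbf{n})-1$, which has $p$-adic density $1/p^2$ instead of $1/p$ and thus contributes negligibly to the sieve main term. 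The dominant contribution comes from $\psi_i(\mathbf{n}) \not\equiv 0,1 \PMod p$, corresponding exactly to $a(p)=-1$. Combining these local constraints yields the stated criterion.
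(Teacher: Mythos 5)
Your overall framework is right: check a pseudorandom majorant and a second hypothesis, then invoke the transference machinery. Your local-factor calculation at the end is also essentially correct, and you correctly identify the role of the Bombieri--Vinogradov theorem for nilsequences and a Vaughan/Heath-Brown decomposition. However, your item (ii) contains a genuine gap that makes the proposed approach unworkable as stated.

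You claim that the second hypothesis to verify is
$\|\widetilde{\theta}_3 - 1\|_{U^{s+1}[N]} = o(1)$.
This is not the hypothesis required by the paper's transference principle (Theorem~\ref{th:transference}), and it is in fact unprovable: already the $U^1$ case of this estimate would force $\mathbb{E}_{n\leq N}\widetilde{\theta}_3(n) \to 1$, i.e.\ an asymptotic for the number of primes of the form $x^2+y^2+1$ up to $N$. But Iwaniec's theorem and Selberg's sieve give only $\sum_{n\leq N}\theta_3(n) \asymp N$, with a lower-bound constant that is not known to match the conjectural Hardy--Littlewood constant; obtaining the asymptotic is open. The same objection applies to $\theta_1$ and $\theta_2$: Chen's theorem and Maynard's sieve give positive lower density but not an asymptotic, so neither $\theta_1 - 1$ nor $\theta_2 - 1$ is Gowers-uniform. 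The correct hypothesis is the \emph{one-sided} ``denseness in higher order Bohr sets'' condition
\[
\mathbb{E}_{n\in[N]}\widetilde{\theta}_3(n)\xi(n) \geq \delta\,\mathbb{E}_{n\in[N]}\xi(n)
\]
for $[0,1]$-valued nilsequences $\xi$ of bounded complexity and non-negligible mean, where $\delta>0$ is a fixed constant (and possibly far from $1$). The paper establishes this by running a weighted version of the half-dimensional sieve for primes of the form $x^2+y^2+1$ (Proposition~\ref{prop: sumsoftwosquares}) with weights $\omega_n=\xi(n)$, and verifying the sieve's Bombieri--Vinogradov hypotheses via Theorems~\ref{equidist-1/3}, \ref{equidist-1/2}, Lemmas~\ref{typeI-BV}, \ref{typeII-wellfactorable}, \ref{typeII-partially-wellfactorable}. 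Your type I/II machinery is the right tool, but it must be aimed at these one-sided sieve hypotheses rather than at Gowers uniformity of $\widetilde{\theta}_3 - 1$.

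A secondary difference is that (i) is obtained by citing Sun--Pan \cite{kAP-twoSquaresPlusOne} for the pseudorandom majorant rather than rebuilding it from Iwaniec's sieve, but this is merely a matter of reference. Also, after fixing (ii), you would need the intermediate reduction step (Proposition~\ref{prop:iwaniec-prime-nil} reducing to the equidistributed nilsequence case, Proposition~\ref{prop:iwaniec-eq-nil}, via the Green--Tao factorization theorem) to deal with nilsequences that are not equidistributed; your proposal omits this step entirely.
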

 
 Theorem \ref{th:linEqTwinPrimes} has an immediate corollary to linear systems of equations within the Chen or bounded gap primes.
 
 \begin{corollary}[Linear equations in almost twin primes] \label{cor: lineq} Let $L_1,\ldots, L_t:\mathbb{Z}^d\to \mathbb{Z}$ be linear forms. Consider the linear system of equations 
 \begin{align}\label{equ13}
L_i(\mathbf{n})=0\quad \forall\,\, i\in 1,\ldots, t.
 \end{align}
 Suppose that the system has a solution in the positive real numbers. Then
 \begin{enumerate}[label=\upshape(\roman*)]
     \item The system \eqref{equ13} has a solution in $\mathcal{P}_{\textnormal{Chen}}^d$, provided that it has a solution in $A_p^d$ for every prime $p$, where $A_p=\{x\in \mathbb{Z}/p\mathbb{Z}:\,\,x(x+2)\not \equiv 0\PMod p\}$.
     \item The system \eqref{equ13} has a solution in $\mathcal{P}_{\mathcal{H}}^d$, provided that $0\in\mathcal{H}$ and it has a solution in $B_p^d$ for every prime $p$, where $B_p=\{x\in \mathbb{Z}/p\mathbb{Z}:\,\, \prod_{j\in[m]}(x+h_j)\not \equiv 0\PMod p\}$.
 \end{enumerate}
 \begin{proof}
 We may assume that each $L_i$ is primitive (i.e. its coefficients have no common prime factor) and that the linear forms are
 linearly independent (so $t\leq d$ and the system has full rank $t$). Since our system is homogeneous, we may assume that the span of the linear forms $L_i$ does not contain a linear form which has exactly two or one nonzero coefficients; indeed, otherwise there exists  
 $(i,j)\in [d]^2$ and coefficients $(a_i,a_j)\in\Z^2\setminus\{0,0\}$ such that $i\neq j$ and for any solution
 $(n_1,\ldots,n_d)\in\Z^d$ of the system we have $a_in_i-a_jn_j=0$.
 If $a_ia_j=0$ then $n_in_j=0$ and so the system has no solution in $A_p^d$ nor
 in $B_p^d$ (because $0\in\mathcal{H}$). 
 So we may assume that both $a_i$ and $a_j$ are nonzero and coprime. But then either $a_i=a_j=1$ and
  we may eliminate a variable to obtain an equivalent system with fewer variables, or there is a prime $p$ dividing $a_i$ but not $a_j$ (or vice versa).
We infer $n_in_j\equiv 0\PMod p$, so the system has no solution in $A_p^d$ nor
in $B_p^d$.

 Therefore, the lattice of integer solutions of the system has a multiplicity-free parametrization of the form $\Psi(\Z^{d-t})$, where $\Psi:\Z^{d-t}\rightarrow\Z^d$ is a system of linear forms. The system $\Psi$ has finite complexity, since no two forms of $\Psi$ are linearly dependent, owing to the assumption about the span of the $L_i$'s not containing linear forms with exactly one or two nonzero coefficients.
 
 Further, the local conditions (i) and (ii) imply that $C_1(\Psi)>0$ and $C_2(\Psi)>0$ respectively.
 We can then apply Theorem \ref{th:linEqTwinPrimes} to the convex body $K=\{\mathbf{x}\in \mathbb{R}^{d-t}:\,\, \Psi(\mathbf{x})\in [1,N]^d\}$  with $N\to \infty$, which satisfies $\Vol(K)\gg N^{d-t}$ since the original system of equations has a solution in the positive real numbers, to conclude the proof.
  \end{proof}

 \end{corollary}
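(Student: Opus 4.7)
The plan is to reduce the corollary to a direct application of Theorem \ref{th:linEqTwinPrimes} by producing an explicit parameterisation of the integer solution set of the homogeneous system \eqref{equ13} as the image of an affine-linear map $\Psi$ of finite complexity, and then verifying that the hypotheses translate into the local non-degeneracy conditions ensuring $C_i(\Psi) > 0$.

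First I would perform standard reductions: assume each $L_i$ is primitive and that the forms are linearly independent, so the system has full rank $t \leq d$. The crucial preliminary step is then to show we may further assume the span of the $L_i$'s contains no linear form supported on only one or two variables; this is what will secure the finite complexity of the parameterising map $\Psi$ in the next step. If such a form existed, it would impose a constraint $a_i n_i = a_j n_j$ with $(i,j)$ distinct. The case $a_i a_j = 0$ forces some $n_k = 0$, obstructing solvability modulo any prime since $0 \notin A_p$ and (because we assume $0\in\mathcal{H}$) $0 \notin B_p$. If both $a_i,a_j$ are nonzero and coprime with $|a_i|+|a_j|>2$, then some prime $p$ divides exactly one of them, producing a local obstruction. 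The remaining case $a_i=a_j=1$ allows eliminating a variable and restarting the reduction.

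With this in place, the lattice of integer solutions admits a multiplicity-free parameterisation $\mathbf{n} = \Psi(\mathbf{x})$ for $\mathbf{x}\in \Z^{d-t}$, where $\Psi:\Z^{d-t}\to\Z^d$ is a system of $d$ linear forms; no two components of $\Psi$ are proportional (otherwise a two-variable linear form would lie in the span of the $L_i$), so $\Psi$ has finite complexity. The assumed local solvability in $A_p^d$ or $B_p^d$ pulls back along $\Psi$ to the local solvability hypothesis on $\Psi$ required to ensure $C_1(\Psi)>0$ or $C_2(\Psi)>0$ in Theorem \ref{th:linEqTwinPrimes}.

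Finally, take the convex body $K = \{\mathbf{x}\in\R^{d-t} : \Psi(\mathbf{x})\in [1,N]^d\}$ for a parameter $N\to\infty$. Because the original system has a solution in the positive reals, $\Psi(\R^{d-t})$ meets the positive orthant transversely, giving $\Vol(K) \gg N^{d-t}$. Theorem \ref{th:linEqTwinPrimes} then yields $\sum_{\mathbf{x}\in K\cap \Z^{d-t}} \prod_{j\in[d]} \theta_i(\psi_j(\mathbf{x})) \gg \Vol(K) > 0$, which forces the existence of $\mathbf{x}$ with every $\psi_j(\mathbf{x})$ in the support of $\theta_i$, producing the desired solution. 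The main obstacle is the first reduction step: cleanly verifying that the excluded two-variable configurations are exactly those that are either locally obstructed or reducible, since this is what makes the reduction compatible with the local hypotheses of the corollary rather than strengthening them.
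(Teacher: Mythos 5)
Your proposal follows the paper's proof essentially verbatim: the same preliminary reductions (primitivity, linear independence, excluding two-variable forms in the span by producing local obstructions or eliminating a variable), the same finite-complexity parameterisation of the solution lattice, the same translation of the local hypotheses into positivity of $C_i(\Psi)$, and the same convex body $K$ with $\Vol(K)\gg N^{d-t}$ fed into Theorem \ref{th:linEqTwinPrimes}. No substantive difference from the paper's argument.
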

 
 As we will see, our method works more generally for 
  $\mathcal{P}_{\mathcal{H},k}:=\{n\in \mathbb{N} : \abs{(n+\mathcal{H})\cap\mathbb{P}}\geq k\}$
  instead of $\mathcal{P}_{\mathcal{H}}$
whenever the admissible tuple $\mathcal{H}$ is sufficiently large in terms of $k$.

\subsection{Transference principles}
Given a finite complexity affine-linear map $\Psi : \Z^d\rightarrow\Z^t$ (often referred to as a system of finite complexity), a function $f : [N]\rightarrow\R_{\geq 0}$ (typically a weighted indicator function of a set of arithmetic interest) and a convex body $K\subset \R^d$,
the $\Psi$-count of $f$ in $K$ is given by
\begin{align*}
T_\Psi(f,K):=\sum_{\mathbf{n}\in K\cap\Z^d}\prod_{i\in [t]}f(\psi_i(\mathbf{n})).
\end{align*}
Thus Theorem \ref{th:linEqTwinPrimes} is about lower-bounding $T_\Psi(\theta_i,K)$
for $i\in\{1,2\}$.
Assume that 
\begin{align*}
\sum_{n\in [N]}f(n)\geq \delta N    
\end{align*}
for some $\delta >0$ and infinitely many integers $N$.
If $f$ takes its values in $[0,1]$
and if additionally 
$\Psi$ is a homogeneous translation-invariant linear system, a functional version of Szemer\'edi's theorem (see \cite[Theorem 11.1]{TV10}) allows one to prove that $T_\Psi(f,K)\gg_{\delta} \Vol(K)$. Now, if $f$ is instead unbounded (for example, the von Mangoldt function), the Green--Tao transference principle consists in approximating $f$ (assuming that $f\leq \nu$ for some ``pseudorandom measure'' $\nu$), by a bounded function $\tilde{f} : [N]\rightarrow [0,1]$ (called a dense model of $f$) in such a way that $T_\Psi(f,K)\approx T_\Psi(\tilde{f},K)$,
and invoking Szemer\'edi's theorem.

In our case, however, as we are interested in non-translation-invariant systems, we will need a different dense model and hence a different transference principle. To see the need for a different transference principle, consider the set $A=\{n\in \mathbb{N}:\,\, \{\sqrt{2}n^2\}\in [1/3,1/3+1/100]\}$ where $\{\cdot\}$ denotes the fractional part of a real number; any translation-invariant configuration can be found inside this set since it is dense by Weyl's criterion, but note that the configuration $(x,x+y,x+2y,y)$ does not occur in $A$ due to the relationship
$(x+2y)^2-2(x+y)^2+x^2-2y^2=0$.

In the case of the ternary Goldbach system 
$\Psi=\Psi_N:(n,m)\mapsto(n,m,N-n-m)$, the Matom\"aki--Shao transference principle \cite{matomaki-shao}  provides, under a Fourier-type condition, an approximating function $\tilde{f}$ to $f$ satisfying again $T_\Psi(f,K)\approx T_\Psi(\tilde{f},K)$, which is lower bounded
pointwise: $\tilde{f}(n)\gg_\delta 1$; however, this does not generalize to the higher complexity case, as the set $A$ above (which is Fourier uniform) demonstrates.

The proof of our main theorem produces more generally a lower bounded dense model for any system of finite complexity.
This results in a transference principle (Theorem \ref{th:transference}) of independent interest, which allows us to lower bound  $T_\Psi(f,K)$ as desired for any function $f$ which is bounded by a pseudorandom measure and dense in every ``higher order Bohr set'' (to be defined precisely later).
We then check these two conditions for our weighted indicator functions of almost twin primes, i.e. functions $\theta_1$ and $\theta_2$. This will follow by working out a reduction to the case of equidistributed higher order Bohr sets (Section \ref{sec:twin-prime-bohr}) and then adapting a Bombieri--Vinogradov theorem for primes twisted by nilsequences \cite{shao2020bombierivinogradov}, proven by the last two authors. We also note that our transference principle is slightly stronger than the Green--Tao transference principle even for translation-invariant systems in the sense that our pseudorandomness requirement is weaker (we do not need the correlation condition from \cite{GT-kAPinPrime}); we achieve this relaxation by applying work of Dodos and Kanellopoulos \cite{greek}.

\subsection{Acknowledgments}

PYB is grateful for the financial support and hospitality of the Max Planck Institute for Mathematics, Bonn.
While finishing up he  was supported by the joint FWF-ANR project Arithrand: FWF: I 4945-N and ANR-20-CE91-0006.
 XS was supported by the NSF grant DMS-1802224. JT was supported by a Titchmarsh Research Fellowship. We thank the anonymous referee for a thorough reading of the paper and many insightful comments and suggestions.

\section{Notation and preliminary definitions}

Throughout the paper, we will use bold face characters to denote vectors or tuples. The set of nonnegative reals is denoted by $\R_{\geq 0}$.
The expectation notation $\E_{x\in X}$ shall mean, for a finite set $X$, the averaging operator
$\frac{1}{\abs{X}}\sum_{x\in X}$.
Further we will use the Vinogradov notation $f\ll g$ or $g\gg f$ whenever two functions
$f$ and $g$ from $\N$ to $\R$ satisfy $|f|\leq Cg$ for some constant $C>0$; the parameters on which the implied constant $C$ depends may be indicated as subscripts.
The conjunction $f\ll g$ and $g\ll f$ will be denoted by $f\asymp g$.
For any assertion $A$, the number $1_A$ is 1 if $A$ is true and 0 if it is false. 
The indicator function of a set $X$ will also be denoted by $1_X$, which should generate no ambiguity.
As usual, we denote by $\Lambda, \varphi, d_k$ the von Mangoldt, Euler, and $k$-fold divisor functions, respectively. The greatest common divisor of two integers $n$ and $m$ will be denoted by $(n,m)$.
A vector or tuple of numbers will usually be denoted in bold font and its components in regular font.
Given an integer $N$, we denote the interval of integers $\{1,\ldots,N\}$ by $[N]$.
We will often identify the sets $[N]$ and $\Z/N\Z$, which we always implicitly do in the natural way (reduction modulo $N$). Thus a function $f$ defined on $[N]$ may naturally be seen as a function on $\Z/N\Z$ and vice versa. When $x$ is a positive real number, we define $[x]=[N]$ where $N=\lfloor x\rfloor$ is the integral part of $x$.

\subsection{Systems of linear forms}

Let $\Psi=(\psi_1,\ldots,\psi_t) : \Z^d \rightarrow \Z^t$
be a system of affine-linear forms. 
We first define a quantity that captures the  local behaviour of $\Psi$ modulo a prime $p$.

\begin{definition}[Local factors]
\label{def:locFac}
For each prime $p$, define the \emph{$p$-adic local factor} of $\Psi$ as
$$\beta_p(\Psi):=\E_{\mathbf{a}\in(\Z/p\Z)^d}\prod_{i\in[t]}\frac{p}{\varphi(p)}1_{\psi_i(\mathbf{a})\not\equiv 0 \PMod p}.$$ Observe that $\Psi$ is admissible as defined in footnote \ref{foot0} if and only if $\beta_p(\Psi)\neq 0$ for each $p$.
\end{definition}
We need to control the asymptotic behaviour of $\beta_p$ as $p$ approaches infinity,
whence the following easy variant of \cite[Lemma 1.3]{GT-linear-equations}.
\begin{lemma}
\label{locfac}
Let $\Psi=(\psi_1,\ldots,\psi_t) : \Z^d \rightarrow \Z^t$
be an admissible system of affine-linear forms, and let $p$ be a prime.
Suppose that there are $t_p$ linear forms among $\psi_1,\cdots,\psi_t$ modulo $p$ such that no two of them are linearly dependent over $\mathbb{F}_p$, and that $t_p$ is maximal for this property.
Then 
\[ \beta_p(\Psi)= \left(\frac{p}{\varphi(p)}\right)^{t-t_p} \left(1+O_{d,t}(p^{-2})\right). \]
\end{lemma}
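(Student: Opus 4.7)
The plan is to expand $\beta_p(\Psi)$ directly from Definition~\ref{def:locFac}, reduce each form mod $p$ to a representative of its scalar-proportionality class, and then apply inclusion-exclusion truncated at first order. The structure mirrors \cite[Lemma 1.3]{GT-linear-equations}, with a little extra care in the affine (rather than homogeneous) setting.

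First, rewrite $\beta_p(\Psi) = (p/\varphi(p))^t \cdot \mathbb{P}_{\mathbf{a}\in (\Z/p\Z)^d}(\psi_i(\mathbf{a})\not\equiv 0 \PMod{p} \text{ for all } i\in[t])$. Next, split $[t]$ into the ``constant'' indices (those $i$ for which $\psi_i \bmod p$ has zero linear part) and the ``non-constant'' ones. Admissibility forces every constant reduction to be a \emph{nonzero} constant, since otherwise $\beta_p(\Psi)=0$; so the indicator $1_{\psi_i(\mathbf{a})\not\equiv 0}$ is identically $1$ at those indices. Among the non-constant forms, grouping by scalar proportionality of the full affine coefficient vectors over $\mathbb{F}_p$ yields exactly $t_p$ classes by hypothesis; pick one representative $\tilde\psi_j$ from each. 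Since the indicator depends only on the class, the probability collapses to $\mathbb{P}(\tilde\psi_j(\mathbf{a})\not\equiv 0 \text{ for all } j\in[t_p])$.

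Now apply inclusion-exclusion. For a single representative $\tilde\psi_j$, the vanishing locus is an affine hyperplane, so $\mathbb{P}(\tilde\psi_j\equiv 0) = p^{-1}$ exactly. For any two distinct representatives, either their linear parts are non-proportional (so the $2\times d$ coefficient matrix has rank $2$ and the joint vanishing set is a codimension-$2$ affine subspace, of probability $p^{-2}$), or their linear parts are proportional but the full affine forms are not (so the two equations are inconsistent and the joint probability is $0$). In either subcase the pairwise joint probability is $\leq p^{-2}$, which by monotonicity dominates all higher-order intersection probabilities as well; summing over the at most $2^{t_p}\leq 2^t$ subsets $S\subseteq [t_p]$ with $|S|\geq 2$ bounds the inclusion-exclusion remainder by $O_{d,t}(p^{-2})$. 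Thus
\[
\mathbb{P}\bigl(\tilde\psi_j(\mathbf{a})\not\equiv 0 \text{ for all } j\bigr) = 1 - t_p p^{-1} + O_{d,t}(p^{-2}).
\]

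Combining this with $(p/\varphi(p))^t = (1-p^{-1})^{-t}$ and the elementary Taylor expansion $(1-p^{-1})^{-t_p}(1-t_p p^{-1}) = 1 + O_t(p^{-2})$ gives $\beta_p(\Psi) = (p/\varphi(p))^{t-t_p}(1 + O_{d,t}(p^{-2}))$, as claimed. The main point demanding genuine care is the case split in the pairwise analysis: in the purely homogeneous Green--Tao setup the ``inconsistent'' subcase does not arise, but in the affine setting one must verify that even representatives whose linear parts \emph{are} proportional still contribute at most $p^{-2}$ to a joint vanishing probability. The observation above handles this, and the remaining computations are bookkeeping.
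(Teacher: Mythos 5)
Your argument follows the same route as the paper: split off the forms that become proportional mod $p$ (each contributing a bare factor $p/\varphi(p)$), reduce to $t_p$ pairwise non-proportional representatives, and run inclusion--exclusion on them. The only difference is that you carry out the inclusion--exclusion (including the affine-specific subcase where two representatives have proportional linear parts but distinct constant terms, forcing the joint vanishing probability to zero) explicitly, whereas the paper delegates exactly this step to the proof of \cite[Lemma 1.3]{GT-linear-equations}; your handling of the constant-mod-$p$ forms via admissibility is also consistent with the reading of $t_p$ that makes the lemma's exponent $t-t_p$ come out right.
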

\begin{proof}
Without loss of generality, assume that no two of $\psi_1,\cdots,\psi_{t_p}$ are proportional modulo $p$, and let $\Psi_p = (\psi_1,\cdots,\psi_{t_p})$. By maximality of $t_p$, $\beta_p(\Psi) = (p/\varphi(p))^{t-t_p} \beta_p(\Psi_p)$. Since no two of $\psi_i,\psi_j$ with $1 \leq i < j \leq t_p$ can be linearly dependent modulo $p$, one can  follow the proof of \cite[Lemma 1.3]{GT-linear-equations}
to conclude that $\beta_p(\Psi_p) = 1 + O_{d,t}(p^{-2})$.
\end{proof}

The next crucial condition on linear systems that we will require is the aforementioned \emph{finite complexity} condition, which we now quantify.
For an affine-linear form $\psi$, let $\dot{\psi}$ be its linear
part.
\begin{definition}[Complexity of a system]
For $A\subset [t]$, let $V_A$ be the set
of linear forms on $\Z^d$ generated by $\{\dot{\psi}_i\mid i\in A\}$.
Let $i\in[t]$. A system $\Psi$ of linear forms is said to be of {\em complexity at most $k$ at $i$}
if
there exists a partition of $[t]\setminus\{i\}$ into at most $k+1$ parts such that 
$\dot{\psi}_i\notin V_A$
for each part $A$ of the partition. 
It is said to be of \emph{complexity at most $k$} if it is of complexity at
most $k$ at any $i\in[t]$. The complexity is the minimum $k$ such that
the complexity is at most $k$, if there is any such $k\in \N$, in which case $k\leq t-2$.
Otherwise, it is said to be infinite.
\end{definition}
A convenient parametrization of a system of finite complexity is the normal form (cf. \cite[Definition 4.2]{GT-linear-equations}), which we now define; it facilitates
 multiple applications of the Cauchy--Schwarz inequality, yielding the
generalized von Neumann theorem \cite[Proposition 7.1]{GT-linear-equations} which we will use later.
Let $\mathbf{e_1},\ldots,\mathbf{e_d}$ be the canonical basis of $\Z^d$.
\begin{definition}[Normal form of a system]
The system $\Psi$ is in $s$-\textit{normal form at} $i\in[t]$
if
there exists a set $J_i\subset [t]\setminus\{i\}$ of cardinality at most $s+1$ such that
$\prod_{j\in J_i}\dot{\psi_i}(\mathbf{e_j})\neq 0$ whereas
for all $k\in[t]\setminus\{i\}$, we have
$\prod_{j\in J_i}\dot{\psi_k}(\mathbf{e_j})=0$.
The system $\Psi$ is in $s$-\textit{normal form} if it is in
$s$-normal form at each $i\in [t]$.
\end{definition}
One may assume that $s\leq t-2$.
Clearly, a system in $s$-normal form has complexity at most $s$.
Due to a simple linear-algebraic argument from \cite[Theorem 4.5]{GT-linear-equations},
 we may assume in Theorems
\ref{th:linEqTwinPrimes} and \ref{th: Linnik} that the system $\Psi$ is in $s$-normal form for some $s\leq t-2$. We summarise this reduction in the following proposition.
\begin{proposition}
\label{prop:newsyst}
Let $\Psi :\Z^d\rightarrow\Z^t$ be a system of
complexity $s$ and $K\subset [-N,N]^d$
be a convex body such that $\Psi(K)\in[1,N]^t$. Suppose that the homogeneous coefficients of $\Psi$ are bounded by $L$.
Then there exist an integer $d'=O_{d,t}(1)$, an integer $N'=O(N)$, a real number $L'=O(L^{O(1)})$, a convex body $K'\subset [-N',N']^d$ and a system 
$\Psi' : \Z^{d'}\rightarrow\Z^t$ of affine-linear forms
in $s$-normal form
such that 
for any $t$ functions $g_1,\ldots,g_t:  \Z\rightarrow\R$, we have
$$
\frac{1}{\Vol(K)}\sum_{\mathbf{n}\in\Z^d\cap K}\prod_{i\in[t]}g_i(\psi_i(\mathbf{n}))=\frac{1}{\Vol(K')}\sum_{\mathbf{n}\in\Z^{d'}\cap K'}\prod_{i\in[t]}g_i(\psi_i'(\mathbf{n})).
$$
Further, we have $\Vol(K')/N'^d\gg \Vol(K)/N^d$.
\end{proposition}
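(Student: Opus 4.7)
The plan is to apply the normal-form reduction of Green and Tao \cite[Theorem 4.5]{GT-linear-equations}, tailored so as to obtain a pointwise equality of normalized sums rather than a mere asymptotic. The construction introduces $t(s+1)$ auxiliary integer variables, one ``block'' of $s+1$ for each form, together with a carefully chosen integer shear of the convex body.

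For each $i \in [t]$, the complexity-$s$ hypothesis yields a partition $[t]\setminus\{i\} = A_{i,0} \sqcup \cdots \sqcup A_{i,s}$ with $\dot{\psi_i} \notin V_{A_{i,r}}$ for every $r$. By duality and Siegel's lemma (or Cramer's rule applied to a bounded-coefficient linear system), I select integer vectors $v_{i,r} \in \Z^d$ of sup-norm $\ll_{d,t} L^{O_{d,t}(1)}$ satisfying $\dot{\psi_k}(v_{i,r}) = 0$ for $k \in A_{i,r}$ and $\dot{\psi_i}(v_{i,r}) \neq 0$. Setting $d' := d + t(s+1)$ and introducing new variables $\mathbf{u} = (u_{i,r})$, I define
\[
\psi_k'(\mathbf{n}, \mathbf{u}) := \psi_k\Bigl(\mathbf{n} + \sum_{i,r} u_{i,r} v_{i,r}\Bigr).
\]
Taking $J_i := \{(i,0), \ldots, (i,s)\}$ witnesses $s$-normal form at $i$: one has $\dot{\psi_i'}(\mathbf{e}_{(i,r)}) = \dot{\psi_i}(v_{i,r}) \neq 0$ for every $r$, while any $k \neq i$ lies in a unique $A_{i,r_0}$ and so $\dot{\psi_k'}(\mathbf{e}_{(i,r_0)}) = \dot{\psi_k}(v_{i,r_0}) = 0$.

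For the convex body I take $B := [0,N)^{t(s+1)}$ and $K' := \{(\mathbf{n}, \mathbf{u}) : \mathbf{n} + \sum_{i,r} u_{i,r} v_{i,r} \in K,\ \mathbf{u} \in B\}$, the preimage of the convex set $K \times B$ under a linear map. The substitution $\mathbf{m} := \mathbf{n} + \sum_{i,r} u_{i,r} v_{i,r}$ is a unimodular shear of $\Z^{d'}$ identifying $K' \cap \Z^{d'}$ with $(K \cap \Z^d) \times (B \cap \Z^{t(s+1)})$ and turning $\psi_k'(\mathbf{n}, \mathbf{u})$ into $\psi_k(\mathbf{m})$; so the sum over $K'$ factors as $|B \cap \Z^{t(s+1)}|$ times the sum over $K$, and the same Jacobian-one substitution gives $\Vol(K') = \Vol(B) \cdot \Vol(K)$. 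Since $B$ is a half-open box with integer corners, $|B \cap \Z^{t(s+1)}| = \Vol(B) = N^{t(s+1)}$ exactly, delivering the equality of normalized sums.

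The remaining bounds are bookkeeping: the coefficients of $\psi_k'$ are at most $L(1 + t(s+1)\max_{i,r}\|v_{i,r}\|_\infty) = O(L^{O(1)})$; every $(\mathbf{n}, \mathbf{u}) \in K'$ satisfies $\|(\mathbf{n}, \mathbf{u})\|_\infty \ll_{d,t,L} N$, so $N' = O(N)$ suffices; and $\Vol(K') = \Vol(K) \cdot N^{t(s+1)}$ yields $\Vol(K')/(N')^{d'} \gg \Vol(K)/N^d$. There is no genuinely hard step; the only delicate point is the insistence on integrality, both of the shear vectors $v_{i,r}$ and of the corners of $B$, which is precisely what upgrades the Green--Tao normal-form reduction from an asymptotic to the pointwise equality of normalized sums demanded by the proposition.
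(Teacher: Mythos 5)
Your construction---extending by $t(s+1)$ auxiliary variables, choosing integer shear vectors annihilating $V_{A_{i,r}}$ but not $\dot\psi_i$, composing with a unimodular shear, and using a box $B$ with $|B\cap\Z^{t(s+1)}|=\Vol(B)$ to get an exact (rather than asymptotic) equality---is precisely the normal-form reduction of \cite[Theorem 4.5]{GT-linear-equations} that the paper invokes (via \cite[Proposition 2.5]{bienvenuThesis}), so this is essentially the same approach. The only cosmetic remarks: the constant in $N'=O(N)$ inherits a dependence on $d,t,L$ through $\max\|v_{i,r}\|_\infty$, and if one insists that ``convex body'' mean a compact set, replace $[0,N)^{t(s+1)}$ by $[\tfrac12,N+\tfrac12]^{t(s+1)}$, which still has exactly $N^{t(s+1)}$ lattice points and volume $N^{t(s+1)}$.
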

In this form, this proposition is essentially \cite[Proposition 2.5]{bienvenuThesis}.

\subsection{Gowers norms}

\begin{definition}[Gowers norms over abelian groups]
Let $Z$ be a finite abelian group.
Let $g:Z\rightarrow\C$ be a function and $k\geq 1$ an integer.
The \emph{Gowers $U^k$ norm} of $g$  is the expression
$$\nor{g}_{U^k(Z)}:=\Big(\E_{x\in Z}\E_{\mathbf{h}\in Z^k}\prod_{\boldsymbol\omega
\in\{0,1\}^k}\mathcal{C}^{\abs{\boldsymbol\omega}}g(x+\boldsymbol\omega\cdot \mathbf{h})\Big)^{2^{-k}},$$
where 
$\mathcal{C}$ is the conjugation operator and $\abs{\boldsymbol\omega}:=\sum_{i\in [k]}\omega_i$.
\end{definition}
For $k\geq 2$, this does define a norm, whereas 
$\nor{f}_{U^1(Z)}=\abs{\E_{x\in Z}f(x)}$. 
For every $k\geq 1$, we have $\nor{g}_{U^k(Z)}\leq \nor{g}_{U^{k+1}(Z)}$.

\begin{definition}[Gowers norms over intervals]
Given a function $f:\Z\to \mathbb{C}$ and an integer $N$,
we define its Gowers norm $\|f\|_{U^k[N]}$ over the interval $[N]$ as
\begin{align*}
\|f\|_{U^k[N]}:=\frac{\|f\cdot 1_{[N]}\|_{U^k(\mathbb{Z}/N'\Z)}}{\|1_{[N]}\|_{U^k(\mathbb{Z}/N'\Z)}},    
\end{align*}
where $N'>2N$ (say $N'=2N+1$ for concreteness) and $f\cdot 1_{[N]}$ and $1_{[N]}$ are extended to $\mathbb{Z}/N'\Z$ in the natural way. By \cite[Lemma A.2]{FrHo14}, this definition is independent of the choice of $N'$.
\end{definition}
Observe that if $N$ and $N'$ are two integers satisfying $\alpha N'\leq N\leq N'$ for some $\alpha >0$ and a 
function $f : [N]\rightarrow \mathbb{C}$ is extended to $\Z/N'\Z$ by setting $f(n)=0$ for $n\in \Z/N'\Z\setminus [N]$, then $\nor{f}_{U^s[N]}\asymp_{\alpha,s}\nor{f}_{U^s(\Z/N'\Z)}$ (see \cite[Lemma B.5]{GT-linear-equations}).
Another norm that we will need is the $L^p$ norm on $[N]$ equipped with the uniform probability measure,
thus 
$$\nor{f}_{L^p[N]}:=\left(\E_{x\in [N]}\abs{f(x)}^p \right)^{1/p},$$
for $p\geq 1$ a real number.
Finally, we define the dual Gowers norm over an interval by  $$\|f\|_{U^k[N]^{*}}:=\sup_{\|g\|_{U^k[N]}=1}|\mathbb{E}_{x\in [N]}f(x)g(x)|.$$

\subsection{Nilsequences}
\begin{definition}[Nilsequences] Let $G$ be a connected, simply-connected nilpotent Lie group,  and let $\Gamma\leq G$ be a lattice. A \emph{filtration} $G_{\bullet}=(G_i)_{i=0}^{\infty}$ on $G$ is an infinite sequence of subgroups of $G$ (which are also connected, simply-connected nilpotent Lie groups) satisfying
\begin{align*}
G=G_0=G_1\supset G_2\supset\cdots,     
\end{align*}
and such that the commutators satisfy $[G_i,G_j]\subset G_{i+j}$, and with the additional conditions that $\Gamma_i:=\Gamma\cap G_i$ is a lattice in $G_i$ for $i\geq 0$ and $G_{s+1}=\{\textnormal{id}\}$ for some $s$. 

The least such $s$ is called the \emph{degree} of $G_{\bullet}$ and the manifold $G/\Gamma$ is called a \emph{nilmanifold}.

A \emph{polynomial sequence} on $G$ (adapted to the filtration $G_{\bullet}$) is a sequence $g:\mathbb{Z}\to G$ satisfying the derivative condition
\begin{align*}
\partial_{h_1}\cdots \partial_{h_k}g(n) \in G_k     
\end{align*}
for all $k\geq 0$, $n\in \Z$ and $h_1,\ldots, h_k\in \mathbb{Z}$, where $\partial_h g(n):=g(n+h)g(n)^{-1}$ denotes the discrete derivative with shift $h$.

Now fix a nilmanifold $G/\Gamma$, a filtration $G_{\bullet}$ of degree $s$ and a polynomial sequence  
$g:\mathbb{Z}\to G$.
Further, assume that  the nilmanifold is equipped with a  Mal'cev basis  $\mathcal{X}$ (see \cite[Definition 2.1, Definition 2.4]{GT-nilsequence}; note that the Mal'cev basis depends on the fixed filtration, not only on the manifold).
A Mal'cev basis induces a right-invariant metric on $G$ (see \cite[Definition 2.2]{GT-nilsequence}), which descends to a right-invariant metric on $G/\Gamma$ and will usually be denoted by $d_{\mathcal{X}}(\cdot,\cdot)$.
If $F:G/\Gamma\to \mathbb{C}$ is Lipschitz with respect to the metric on $G/\Gamma$ induced by $\mathcal{X}$, it is bounded by compacity so
we let $$\nor{F}_{\mathrm{Lip}(\mathcal{X})}=\nor{F}_\infty+\sup_{\substack{x,y\in G/\Gamma\\x\neq y}}\frac{\abs{F(x)-F(y)}}{d_{\mathcal{X}}(x,y)},$$
and we call a sequence of the form $n \mapsto F(g(n)\Gamma)$ a \emph{nilsequence}. 
The degree of the nilsequence is then $s$, and it is said to be of \textit{complexity} 
at most $M$ if  each of the degree $s$, the dimension of $G/\Gamma$, the rationality of $\mathcal{X}$
and the Lipschitz constant of $F$ is at most $M$.
\end{definition}
We now introduce a class of nilsequences of bounded degree and controlled complexity.
\begin{definition}
\label{def:PsiDeltaK}
Let $s\geq 1$ and $\Delta, K \geq 2$. Define $\Xi_s(\Delta, K)$ to  be  the  collection of all nilsequences  $\xi:  \Z \to  \C$ of the form $\xi(n) = F(g(n)\Gamma)$, where

\begin{enumerate}

\item $G/\Gamma$ is a nilmanifold of dimension at most $\Delta$, equipped with a filtration $G_{\bullet}$  of degree $\leq s$ and a
$K$-rational  Mal'cev basis  $\mathcal{X}$ ;

\item $g: \Z \to G$ is a  polynomial sequence adapted to  $G_{\bullet}$;

\item $F: G/\Gamma \to  \mathbb{C}$ is a Lipschitz function satisfying $\|F\|_{\operatorname{Lip}(\mathcal{X})} \leq 1$.
\end{enumerate}
\end{definition}

\begin{definition}[Equidistributed nilsequences]
\label{def:PsiDeltaKeta}
For $\eta \in (0,1)$ and $x \geq 2$, define $\Xi_s(\Delta, K; \eta, x)$ to be the collection of those nilsequences $\xi \in \Xi_s(\Delta, K)$ of the form $\xi(n) = F(g(n)\Gamma)$ that fulfill the additional condition that the sequence $(g(n)\Gamma)_{1 \leq n \leq 10x}$ is totally $\eta$-equidistributed in $G/\Gamma$ (defined in~\cite[Definition 1.2]{GT-nilsequence}).
\end{definition}

We will loosely call such nilsequences $\eta$-\emph{equidistributed}. We caution that this notation is slightly different from~\cite{shao2020bombierivinogradov}, in that we do not require  $\int_{G/\Gamma} F = 0$ (where the integral is taken with respect to the unique Haar measure on $G/\Gamma$). We shall use $\Xi_s^0(\Delta, K; \eta, x)$  to denote the set of $\eta$-equidistributed nilsequences in $\Xi_s(\Delta, K; \eta, x)$ satisfying the additional condition that $\int_{G/\Gamma} F = 0$.

\section{A transference principle for arbitrary systems of linear equations}
\label{sec:reg}
A fundamental notion related to transference principles is that of \emph{pseudorandom measures},
the basic philosophy being that if a function is bounded by such a measure, it behaves as if
it was bounded by 1.
\begin{definition}
A  function $\nu: \Z/N\Z\rightarrow \R_{\geq 0}$ is said to satisfy the $(d_0,t_0,L_0,\varepsilon)$-\textit{linear forms conditions} if it satisfies the following. Let $1\leq d\leq d_0$ and $1\leq t\leq t_0$. For every finite complexity system of affine-linear forms $\Psi=(\psi_1,\ldots,\psi_t) : \Z^d\rightarrow \Z^t$ with linear coefficients bounded by $L_0$ in modulus,  the following estimate holds :
\begin{equation}
\label{defeqlinforms}
\abs{\E_{\mathbf{n}\in (\Z/N\Z)^d}\prod_{i\in [t]} \nu(\psi_i(\mathbf{n}))-1}\leq\varepsilon.
\end{equation}
If it satisfies the $(M,M,M,\varepsilon)$-linear forms conditions, it is said to be $(M,\varepsilon)$-pseudorandom.
\end{definition}
Observe that $\nor{\nu-1}_{U^k(\Z/N\Z)}=O(\varepsilon^{1/2^k})$ as soon as
$\nu$ satisfies the $(k+1,2^k,1,\varepsilon)$-linear forms conditions, and that
the constant coefficients of $\Psi$ are unrestricted.
Note that our definition
is less restrictive than that of Green and Tao \cite{GT-linear-equations}, since
we do not require the so-called correlation condition.

The aim of this section is to prove the following theorem.

\begin{theorem}[Transference principle for linear systems]
\label{th:transference}
Let $t,d,L, s \geq 1$ be integers, and let $\delta, \eta>0$ be real numbers. 
Then there exist constants $M\geq 1$ depending on $d,t,L$ and $Y,\varepsilon >0$ depending on $d,t,L,\delta$ such that the following holds.
Let $\Psi=(\psi_1,\ldots,\psi_t):\Z^d\rightarrow\Z^t$ be a system of affine-linear forms of complexity $s$ whose homogeneous parts have coefficients bounded by $L$.
Let $N$ be a large enough prime  and $\alpha$ be small enough (both in terms of $t,d,L,\eta$).
Let $K\subset [-N,N]^d$ be a convex body satisfying
$\mathrm{Vol}(K)\geq \eta N^d$ and
 $\Psi(K)\subset [1,N]^t$. Lastly, for each $i\in[t]$, let $\lambda_i:[N]\to \mathbb{R}_{\geq 0}$ be a function. Assume that the following additional hypotheses hold.
\begin{enumerate}[label=\upshape(\roman*)]
    \item There exists an $(M,\alpha)$-pseudorandom measure $\nu : \Z/N\Z\rightarrow\R_{\geq 0}$
    such that 
    $$\lambda_i(n)\leq \nu(n)\quad \textnormal{for each}\,\, i\in [t]\,\, \textnormal{and}\,\,n\in [N]$$
    where we identify $[N]$ and $\Z/N\Z$ in the natural way.
    \item Each function $\lambda_i$ is \emph{dense in higher order Bohr sets} in the sense that
    \begin{align*}
     \mathbb{E}_{n \in [N]}\lambda_i(n)\xi(n)\geq \delta  \mathbb{E}_{n \in [N]}\xi(n)   
    \end{align*}
    for every nilsequence $\xi:\mathbb{Z}\to [0,1]$ of degree $\leq s$ and of complexity at most $Y$ that satisfies $\mathbb{E}_{n\leq N}\xi(n)\geq \varepsilon $.
\end{enumerate}

Then we have
\begin{align}\label{equ1}
\mathbb{E}_{\mathbf{n}\in K\cap\Z^d}\lambda_1(\psi_1(\mathbf{n}))\lambda_2(\psi_2(\mathbf{n}))\cdots \lambda_t(\psi_t(\mathbf{n}))\geq 0.99\delta^{t}.   
\end{align}

\end{theorem}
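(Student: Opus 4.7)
The plan is to follow the standard transference template, but with a crucial twist: instead of passing to Szemer\'edi's theorem (which applies only to translation-invariant systems), we produce a dense model $\widetilde\lambda_i : [N]\to[0,1]$ that is pointwise at least $\delta - o(1)$ on almost all of $[N]$, and combine this with a counting lemma. This works for arbitrary finite-complexity systems because the lower bound is purely pointwise. Throughout, by Proposition \ref{prop:newsyst} we may assume $\Psi$ is in $s$-normal form.

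The first technical step is to build the dense model. For each $i\in[t]$, I would apply the Dodos--Kanellopoulos dense model theorem \cite{greek} to produce $\widetilde\lambda_i : [N]\to [0,1]$ with $\|\lambda_i - \widetilde\lambda_i\|_{U^{s+1}[N]} = o_{\alpha\to 0}(1)$; the reason for citing \cite{greek} rather than the Green--Tao construction is that it requires only the linear-forms condition on $\nu$, not the correlation condition. The $\Psi$-counts of the $\lambda_i$ and $\widetilde\lambda_i$ are then compared via the telescoping identity
\begin{align*}
T_\Psi(\lambda_1,\ldots,\lambda_t;K) - T_\Psi(\widetilde\lambda_1,\ldots,\widetilde\lambda_t;K) = \sum_{i=1}^{t} T_\Psi(\widetilde\lambda_1,\ldots,\widetilde\lambda_{i-1},\lambda_i-\widetilde\lambda_i,\lambda_{i+1},\ldots,\lambda_t;K)
\end{align*}
and the Green--Tao generalized von Neumann inequality \cite[Proposition 7.1]{GT-linear-equations}. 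Since each factor is dominated by $1+\nu$ (which inherits a sufficient linear forms condition from $\nu$), and $\Psi$ has complexity $s$, the $U^{s+1}$-smallness of $\lambda_i - \widetilde\lambda_i$ bounds each summand by $o_{\alpha\to 0}(1)$.

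The main obstacle is to show that $\widetilde\lambda_i(n) \geq \delta - \sigma$ outside a set of density at most $\sigma$. Because a degree-$s$ nilsequence of bounded complexity has bounded $U^{s+1}$-dual norm, hypothesis (ii) transfers from $\lambda_i$ to $\widetilde\lambda_i$: for every nilsequence $\xi$ of degree $\leq s$ and complexity $\leq Y$ with $\mathbb{E}_n\xi(n) \geq \varepsilon$, one has $\mathbb{E}_n \widetilde\lambda_i(n)\xi(n) \geq \delta\,\mathbb{E}_n \xi(n) - o_{\alpha\to 0}(1)$. Suppose for contradiction that $S_i := \{n\in[N] : \widetilde\lambda_i(n) < \delta - \sigma\}$ has density at least $\sigma$. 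Applying the Green--Tao arithmetic regularity lemma to the bounded function $1_{S_i}$ yields a decomposition $1_{S_i} = \chi + f_{\mathrm{sml}} + f_{\mathrm{unf}}$, where $\chi$ is a $[0,1]$-valued degree-$s$ nilsequence of complexity at most $Y$ (by calibrating the regularity parameter to $Y$), $\|f_{\mathrm{sml}}\|_{L^2}$ is tiny and $\|f_{\mathrm{unf}}\|_{U^{s+1}}$ is tiny. Then $\mathbb{E}_n \widetilde\lambda_i(n)\chi(n) \leq \mathbb{E}_n \widetilde\lambda_i(n)1_{S_i}(n) + o(1) \leq (\delta-\sigma)\sigma + o(1)$, which, upon taking $\varepsilon \leq \sigma/2$, contradicts the transferred correlation hypothesis applied to $\chi$ (whose mean is $\geq |S_i|/N - o(1) \geq \sigma/2$).

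To finish, I would observe that since each $\psi_i$ has coefficients bounded by $L$ and $K$ is a convex body with $\mathrm{Vol}(K)\geq\eta N^d$, a linear form with small image density lifts to a small preimage density on $K\cap\mathbb Z^d$; hence the preimage in $K\cap\mathbb{Z}^d$ of $\bigcup_{i\in[t]}\psi_i^{-1}(S_i)$ has density $O_{L,d,t,\eta}(\sigma)$. On the complement, $\prod_i \widetilde\lambda_i(\psi_i(\mathbf n)) \geq (\delta-\sigma)^t$, and choosing $\sigma$ small enough in terms of $\delta,t,d,L,\eta$ yields $T_\Psi(\widetilde\lambda_1,\ldots,\widetilde\lambda_t;K) \geq 0.995\,\delta^t$. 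Combined with the telescoping estimate this gives the claimed bound $0.99\,\delta^t$. The delicate parts of the argument are the calibration of the regularity-lemma parameters against the complexity bound $Y$ and the density threshold $\varepsilon$ in hypothesis (ii), which is where the quantitative dependencies among $M$, $Y$, $\varepsilon$, and $\alpha$ are fixed.
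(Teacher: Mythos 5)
Your overall strategy — dense model via Dodos--Kanellopoulos, telescoping plus generalised von Neumann to transfer counts, arithmetic regularity to extract a pointwise lower bound, then a direct counting argument — is the same template as the paper's. But there is a genuine gap in the crucial step where you try to show that $\widetilde\lambda_i \geq \delta - \sigma$ outside a density-$\sigma$ set.

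You apply the regularity lemma to $1_{S_i}$ and write $1_{S_i} = \chi + f_{\mathrm{sml}} + f_{\mathrm{unf}}$, then assert
$\mathbb{E}_n \widetilde\lambda_i(n)\chi(n) \leq \mathbb{E}_n \widetilde\lambda_i(n) 1_{S_i}(n) + o(1)$.
This requires bounding $\mathbb{E}_n\widetilde\lambda_i(n) f_{\mathrm{unf}}(n)$, but the only control available on $f_{\mathrm{unf}}$ is that $\|f_{\mathrm{unf}}\|_{U^{s+1}}$ is small, and $\widetilde\lambda_i$ is merely a $[0,1]$-valued function, which does \emph{not} have bounded dual Gowers norm. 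A bounded function can correlate with a function of tiny $U^{s+1}$-norm but $\Theta(1)$ size (for instance if $f_{\mathrm{unf}}$ happens to be built out of $\widetilde\lambda_i$ itself). So the inequality above has no justification; the regularity decomposition of $1_{S_i}$ is simply not the right object to test against $\widetilde\lambda_i$.

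The paper avoids this by applying the weak regularity lemma to the dense model $f_1$ (the bounded part of $\lambda_i$) itself, writing $f_1 = h + g$ with $h = \E[f_1\mid\CB]$ and $\|g\|_{U^{s+1}}$ small. The quantity to lower bound is then the conditional average $c_E = \E[f_1\mid E]$ on a large atom $E$, and this is approached by replacing $1_E$ with a nilsequence $\xi_E$ (via $s$-measurability). The reason this works and your route does not: $\xi_E$ is a nilsequence of controlled complexity, so by \cite[Proposition 11.2]{GT-linear-equations} one can split $\xi_E = \xi_1 + \xi_2$ with $\xi_1$ of bounded $U^{s+1}$-dual norm and $\|\xi_2\|_\infty$ small, which is exactly what is needed to control $\sum_n f_2(n)\xi_E(n)$ using $\|f_2\|_{U^{s+1}}$ and $|f_2|\le \nu+1$. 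In other words, the paper tests the uniformity hypothesis against a nilsequence of bounded complexity (whose dual norm is controlled) rather than against a merely bounded function, and the pointwise lower bound is achieved for the conditionally averaged function $h$ (after modifying it on small atoms), not for $\widetilde\lambda_i$ itself. If you restructure your argument to regularise $\widetilde\lambda_i$ rather than $1_{S_i}$, and shift the pointwise lower bound from $\widetilde\lambda_i$ to its structured part, you essentially recover the paper's proof; as written, the step is unjustified.

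A smaller imprecision: the arithmetic regularity lemma does not produce a single $[0,1]$-valued nilsequence $\chi$ of complexity $Y$; it produces a partition $\CB$ into $s$-measurable atoms, and $\chi = \E[\,\cdot\mid\CB]$ is a step function on those atoms, each of which is individually approximable by a nilsequence. You would need to pass to the individual atoms as the paper does.
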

We now present the tools which will enable us to prove this.
We need a notion of higher order Bohr sets, which are roughly speaking  sets that are approximated by level sets of nilsequences to any given accuracy. The following  definitions of $s$-measurable sets and $s$-factors are from \cite[Section 2]{GT10corr}.

\begin{definition}[$s$-measurable sets]\label{def:bohr}
Let $s \geq 1$ and let $\Phi: \R \to \R$ be a growth function. A subset $E \subset [N]$ is called \emph{$s$-measurable} with growth function $\Phi$ if, for any $M \geq 1$, there exists a degree $\leq s$ nilsequence $\xi: \Z \rightarrow [0,1]$ of complexity at most $\Phi(M)$ such that $\|1_E - \xi\|_{L^2[N]} \leq 1/M$.
\end{definition}

\begin{definition}
If $\CB$ is a partition of $[N]$, we call its parts $E\in \CB$ \emph{atoms}. The \emph{conditional expectation} of a function $f : [N]\rightarrow\R$ with respect to $\CB$ is
the function $\E[f|\CB]$ which is constant on each atom, equal to the average of $f$ on the atom.
\end{definition}
\begin{definition}[$s$-factors]\label{def:s-factor}
Let $s \geq 1$ and let $\Phi: \R \to \R$ be a function. 
A partition $\CB$ of $[N]$ is called an \emph{$s$-factor} of complexity at most $M$ and growth function $\Phi$
if $\CB$ contains at most $M$ atoms and each atom is 
$s$-measurable of growth function $\Phi$.
\end{definition}

The following two propositions will be important in our proof of Theorem \ref{th:transference}.
The first one is the weak regularity lemma\footnote{It was discovered recently that the reference \cite{GT10} contains a slight error. See the arXiv version \cite{GT10corr} for details and a correction.
Nevertheless, the regularity lemma part of that reference is unaltered, only the counting lemma (and what depends on it) was not entirely correct.}  proved in \cite[Corollary 2.6]{GT10}.
\begin{proposition}[Weak regularity lemma]\label{prop:weak-reg}
Let $s \geq 1$ and $\varepsilon > 0$.
Let $f: [N] \to \R$ be a function with $|f(n)| \leq 1$ pointwise. There exists a  function $\Phi : \R \to \R$ depending only on $s,\ee$ and an $s$-factor $\CB$ of complexity $O_{s,\ee}(1)$ and growth function $\Phi$ such that  $\|f - \E(f \vert \CB)\|_{U^{s+1}(\Z/N\Z)} \leq \varepsilon$.
\end{proposition}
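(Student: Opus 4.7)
The plan is to prove the proposition by a standard energy-increment argument, driven by the inverse theorem for the Gowers $U^{s+1}$-norm. Starting from the trivial factor $\CB_0=\{[N]\}$, we iteratively build an increasing sequence of $s$-factors $\CB_0\subset \CB_1\subset \cdots$ by refining whenever $\|f-\E(f\vert \CB_k)\|_{U^{s+1}(\Z/N\Z)}>\varepsilon$. The $L^2$-energy $\|\E(f\vert \CB_k)\|_{L^2[N]}^2$ is non-decreasing and bounded above by $\|f\|_\infty^2\leq 1$; once we show each refinement produces an energy jump of at least some $c=c(s,\varepsilon)>0$, the process must terminate after at most $1/c$ steps, yielding the desired factor.

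The engine driving the refinement is the inverse theorem for Gowers norms of Green--Tao--Ziegler: if $g:[N]\to [-2,2]$ satisfies $\|g\|_{U^{s+1}(\Z/N\Z)}>\varepsilon$, there exists a nilsequence $\xi(n)=F(g(n)\Gamma)$ of degree $\leq s$ and complexity $\leq M_0(s,\varepsilon)$ with $|\E_{n\in[N]} g(n)\overline{\xi(n)}|\geq c_0(s,\varepsilon)$. Applied to $g=f-\E(f\vert \CB_k)$, this yields a correlating nilsequence $\xi_k$. I refine $\CB_k$ to $\CB_{k+1}$ by partitioning the range of $\mathrm{Re}\,\xi_k$ and $\mathrm{Im}\,\xi_k$ into intervals of length $\kappa:=\kappa(s,\varepsilon)$ and intersecting the resulting level sets with the atoms of $\CB_k$. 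Standard $L^2$-projection then gives
\[
\|\E(f\vert \CB_{k+1})\|_{L^2[N]}^2 - \|\E(f\vert \CB_k)\|_{L^2[N]}^2 \;\gg_{s,\varepsilon}\; c_0(s,\varepsilon)^2,
\]
provided $\kappa$ is chosen sufficiently small in terms of $c_0$, since then the projection of $f-\E(f\vert \CB_k)$ onto the $\CB_{k+1}$-measurable functions captures all but an $O(\kappa)$ fraction of the correlation with $\xi_k$.

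To verify that the atoms of each $\CB_k$ remain $s$-measurable with a common growth function $\Phi$, I approximate the sharp indicator $\1_{[a,a+\kappa)}\circ \mathrm{Re}\,\xi_k$ by a Lipschitz bump $\chi_M\circ \mathrm{Re}\,\xi_k$ at scale $1/M$; since $F$ is Lipschitz of norm $\leq M_0(s,\varepsilon)$ and $g_k$ is a polynomial sequence of bounded degree and complexity, the composition is itself a degree-$\leq s$ nilsequence of complexity $\ll_{s,\varepsilon} M$, and its $L^2[N]$-distance to the sharp indicator is $\ll \sqrt{1/M}$. Atoms at stage $k+1$ are products of $O_{s,\varepsilon}(1)$ such Lipschitz nilsequences with an atom of $\CB_k$, so one can choose $\Phi$ depending only on $s$ and $\varepsilon$ that majorises the complexity needed at each of the $O_{s,\varepsilon}(1)$ stages.

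The main obstacle is the bookkeeping of the growth function across iterations: each refinement composes several Lipschitz cutoffs with polynomial sequences in bounded-complexity nilmanifolds, and the Lipschitz constant of the product of $k$ smoothed indicators blows up with the accuracy parameter $M$. The resolution is to note that the total number of refinement steps is bounded by a constant $K=K(s,\varepsilon)$, so I can first fix $K$, then define $\Phi$ by downward induction so that $\Phi(M)$ absorbs the combined complexity of at most $K$ layers of smoothings at scale $M$, each lying in a nilmanifold of complexity $\leq M_0(s,\varepsilon)$. After at most $K$ steps the $U^{s+1}$-norm of $f-\E(f\vert \CB)$ drops below $\varepsilon$, completing the proof.
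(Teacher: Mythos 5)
The paper does not give a proof of this proposition; it cites it directly as~\cite[Corollary 2.6]{GT10}. Your energy-increment-plus-inverse-theorem plan is essentially the argument Green and Tao use there, so your proposal is a faithful reconstruction rather than a genuinely different route: start from the trivial factor, apply the $U^{s+1}$ inverse theorem when the uniformity norm of $f-\E(f\mid\CB_k)$ is large, refine by (smoothed) level sets of the correlating nilsequence, and use Pythagoras to extract an energy jump that bounds the number of iterations in terms of $s,\varepsilon$.

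One technical point you pass over too quickly is the claim that the $L^2[N]$-distance from the sharp indicator $1_{[a,a+\kappa)}\circ\mathrm{Re}\,\xi_k$ to the smoothed cutoff $\chi_M\circ\mathrm{Re}\,\xi_k$ is $\ll\sqrt{1/M}$. This is false for an arbitrary choice of the partition endpoint $a$: if the values $\mathrm{Re}\,\xi_k(n)$ concentrate near $a$, the boundary strip of width $1/M$ may contain a positive proportion of $n\in[N]$. The standard fix (and the one used in \cite{GT10}) is to pick the partition points $a$ by an averaging or pigeonhole argument so that, for each scale $1/M$ in a suitable sequence of scales, the boundary strip has small measure; equivalently, one randomly shifts the mesh of the partition and takes expectations. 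Once that is in place, your bookkeeping of the growth function $\Phi$ across the $O_{s,\varepsilon}(1)$ refinement steps is sound: since the number of steps is $K=K(s,\varepsilon)$, each atom of the final factor is an intersection of at most $K$ level sets of nilsequences of complexity $\leq M_0(s,\varepsilon)$, and a smoothing at scale $\asymp 1/(KM)$ of each of them produces a degree-$\leq s$ nilsequence of complexity $\ll_{s,\varepsilon} M^{O_{s,\varepsilon}(1)}$ approximating the atom's indicator to within $1/M$ in $L^2$, giving a single $\Phi$ depending only on $s$ and $\varepsilon$.
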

In the just cited the reference, the Gowers norms are interval Gowers norms, but this makes no difference since
the $U^{s+1}(\Z/N\Z)$ and $U^{s+1}[N]$ norms are equivalent on bounded functions (see \cite[Lemma A.4]{FrHo14} for instance).
We also state the dense model theorem from the work of Dodos and Kanellopoulos
\cite[Corollary 4.4]{greek}. 
\begin{proposition}[Dense model theorem]
\label{denseModel}
Let $s \geq 1$ and let $ Z$ be a finite abelian group. Let $0<\eta\leq 1$. Suppose that $\nu : Z\rightarrow\R_{\geq 0}$
satisfies $\nor{\nu-1}_{U^{2s}(Z)}\leq \eta$, and that $f : Z\rightarrow \R$  is a function such that $ \abs{f(n)}\leq \nu(n)$ pointwise. Then we may decompose $f = f_1+ f_2$, where
$\sup_{n\in Z} |f_1(n)|\leq 1$
and $\| f_2\|_{U^{s}(Z)}= o_{\eta\rightarrow 0;s}(1)$.

Further, if $f$ is nonnegative, so is $f_1$.
\end{proposition}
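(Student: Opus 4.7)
The plan is to prove Proposition~\ref{denseModel} by a Hahn--Banach separation argument, eliminating the correlation condition of \cite{GT-kAPinPrime} by working with the $U^{2s}$ norm of $\nu-1$ in place of a $U^s$ norm, following the strategy of Dodos--Kanellopoulos \cite{greek}. First I would reduce to the case $f \geq 0$: splitting $f = f^+ - f^-$ with $0 \leq f^{\pm} \leq \nu$ and treating each part separately, it suffices to produce $f_1$ in the convex set $K := \{g : Z \to \R : 0 \leq g \leq 1\}$ with $\|f - f_1\|_{U^s(Z)} = o_{\eta \to 0; s}(1)$; this simultaneously yields the decomposition and the preservation of nonnegativity.

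Suppose for contradiction that $\inf_{g \in K} \|f - g\|_{U^s(Z)} > \delta$ for some $\delta > 0$ that does not tend to $0$ with $\eta$. Since $\|\cdot\|_{U^s(Z)}$ is a seminorm (for $s \geq 2$), the Hahn--Banach theorem supplies a function $\phi : Z \to \R$ with $\|\phi\|_{U^s(Z)^*} \leq 1$ separating $f$ from $K$, so that
\begin{equation*}
\E_{n \in Z} f(n)\phi(n) \;\geq\; \delta + \sup_{g \in K} \E_{n \in Z} g(n)\phi(n) \;=\; \delta + \E_{n \in Z} \phi^+(n),
\end{equation*}
where $\phi^+ := \max(\phi, 0)$. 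Combining with the pointwise bound $f \phi \leq f \phi^+ \leq \nu \phi^+$, one gets
\begin{equation*}
\delta + \E_{n \in Z} \phi^+(n) \;\leq\; \E_{n \in Z} \nu(n)\phi^+(n) \;=\; \E_{n \in Z} \phi^+(n) + \E_{n \in Z} (\nu(n)-1)\phi^+(n),
\end{equation*}
hence $\E_{n \in Z} (\nu-1)\phi^+ > \delta$.

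The main obstacle, and the functional-analytic crux of \cite{greek}, is to rule this out using only the hypothesis $\|\nu - 1\|_{U^{2s}(Z)} \leq \eta$. Concretely, I would prove the key lemma that whenever $\|\phi\|_{U^s(Z)^*} \leq 1$ one has $\|\phi^+\|_{U^{2s}(Z)^*} \leq C_s$ for a constant depending only on $s$; this is precisely where the doubling of the Gowers index buys us the flexibility to absorb the positive-part operation without any extra correlation hypothesis on $\nu$. The proof of this lemma proceeds by expanding the $U^{2s}$ Gowers inner product of $\phi^+$ against an arbitrary test function, splitting the $2s$-dimensional hypercube into two $s$-dimensional subcubes, applying Cauchy--Schwarz to decouple them, and using the identity $\phi^+ = \tfrac{1}{2}(\phi + |\phi|)$ together with an iteration that reduces the control of $|\phi|$ in the $U^{2s}$-dual norm to the known control of $\phi$ in the $U^{s}$-dual norm. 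Granted this lemma, the Gowers--Cauchy--Schwarz inequality yields
\begin{equation*}
\delta \;<\; \E_{n \in Z}(\nu(n)-1)\phi^+(n) \;\leq\; \|\nu - 1\|_{U^{2s}(Z)}\,\|\phi^+\|_{U^{2s}(Z)^*} \;\leq\; C_s \eta,
\end{equation*}
contradicting the assumption that $\delta$ stays bounded away from $0$ as $\eta \to 0$, and completing the proof.
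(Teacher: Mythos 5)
The paper does not actually prove Proposition~\ref{denseModel}; it imports it wholesale from Dodos--Kanellopoulos \cite[Corollary~4.4]{greek}. Your write-up is therefore a from-scratch reconstruction, and its overall skeleton is the right one: reduce to $f\geq 0$, separate $f$ from the order interval $K=\{g:0\leq g\leq 1\}$ by Hahn--Banach in the $U^s$-seminorm, derive $\E_{n\in Z}(\nu-1)\phi^+>\delta$, and then try to kill that correlation using $\|\nu-1\|_{U^{2s}(Z)}\leq\eta$. The reduction to nonnegative $f$ and the derivation of $\E(\nu-1)\phi^+>\delta$ are both fine.

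The genuine gap is the ``key lemma'': you assert that $\|\phi\|_{U^s(Z)^*}\leq 1$ implies $\|\phi^+\|_{U^{2s}(Z)^*}\leq C_s$, and the sketch you give for it (split the $2s$-cube into two $s$-cubes, Cauchy--Schwarz, and an unspecified ``iteration'' passing from $\phi$ to $|\phi|$) does not amount to a proof and, I believe, cannot be repaired as stated. The difficulty is precisely the one that led Green and Tao to introduce the correlation condition. A Hahn--Banach separator is only known to lie in the $U^s$-dual unit ball, which contains very wild (in particular unbounded) functions: testing against $|Z|\delta_n$ shows only $\|\phi\|_\infty\leq |Z|^{1-(s+1)2^{-s}}$. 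The map $\phi\mapsto\phi^+$ is a nonlinear, nonconvex operation not controlled by any finite chain of Gowers--Cauchy--Schwarz applications starting from information about $\phi$ alone; equivalently, writing $\phi^+=\tfrac12(\phi+|\phi|)$ shifts the problem to $|\phi|$, but $\E g|\phi|=\E\bigl(g\,\mathrm{sgn}(\phi)\bigr)\phi$ and there is no inequality of the form $\|g h\|_{U^s}\ll\|g\|_{U^{2s}}$ for merely bounded $h$ that would let you absorb the sign factor and invoke $\|\phi\|_{U^s^*}\leq 1$. Were such a lemma available at this level of generality, the whole correlation-condition story and the subsequent work of Conlon--Fox--Zhao and Dodos--Kanellopoulos would have been unnecessary. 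What actually makes the $U^{2s}$-hypothesis usable in \cite{greek} is that one does \emph{not} abstract the separator to an arbitrary element of the $U^s$-dual ball: one keeps its structure as a combination of Gowers dual functions of $\nu$-bounded functions, and uses $\|\nu-1\|_{U^{2s}}\leq\eta$ (via Gowers--Cauchy--Schwarz on the box structure underlying $U^{2s}$) to compare such dual functions with dual functions of genuinely $1$-bounded functions, for which the polynomial-approximation step of the positive part is unproblematic. Your argument discards this structural information too early, and the key lemma is the place where that loss becomes fatal.
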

We note that this version of the dense model theorem has weaker hypotheses
 than the one in \cite{GT-linear-equations} (it does not require the so-called correlation condition), a fact that will be important for us.
 A dense model for arithmetic progressions was also achieved without correlation conditions by Conlon, Fox and Zhao \cite{CFZ1,CFZ2}, but their dense model is not as strong as we need since it is not close in the Gowers norms topology to the function to be modeled.
 
 Finally, we state a version of the generalised von Neumann theorem \cite[Proposition 7.1']{GT-linear-equations}.
 \begin{proposition}[Generalised von Neumann theorem]
\label{vonNeumann}
Let $t,d,L,s$ be positive integer parameters. 
Let $\delta,\varepsilon$ be in $(0,1)$ and $N\geq 1$.
Then there is a positive constant $D$, depending on $t,d$ and $L$ such that
the following holds. Let $\nu : \Z/N\Z
\rightarrow \R_{\geq 0}$ be a $(M,\varepsilon)$-pseudorandom measure, and suppose
that $f_1,\ldots,f_t : \Z/N\Z\rightarrow\R$
are functions with $\abs{f_i(x)}\leq \nu (x)$
for all $i\in[t]$ and $x\in \Z/N\Z$.
Suppose that $\Psi=(\psi_1,\ldots,\psi_t)$ is a system of affine-linear forms in $s$-normal form whose linear coefficients are bounded by $L$. 
Let $K'\subset (\Z/N\Z)^d$ be identified with $K\cap\Z^d$ where $K\subset [-N/4,N/4]^d$ is a convex set.
Finally, suppose that 
\begin{equation}
\min_{1\leq j\leq t}\nor{f_j}_{U^{s+1}[N]}\leq \delta.
\label{uniformity}
\end{equation}
Then we have
$$
\E_{\mathbf{n}\in (\Z/N\Z)^d}1_{K'}(\mathbf{n})\prod_{i\in[t]}f_i(\psi_i(\mathbf{n}))=o_{\delta\rightarrow 0}(1)
+o_{N\rightarrow\infty;\delta}(1)+o_{\varepsilon\rightarrow 0;\delta}(1)
$$
where the $o(1)$ terms may also depend on $d,t,L$.
\end{proposition}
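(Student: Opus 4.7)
The plan is to adapt the standard Green--Tao transference template: approximate each $\lambda_i$ by a bounded dense model in $U^{s+1}$, apply a structure theorem (the weak regularity lemma) to the bounded models, and finally exploit the density hypothesis on higher order Bohr sets to lower-bound the structured part. Preliminarily, Proposition \ref{prop:newsyst} lets us assume that $\Psi$ is in $s$-normal form (at the cost of enlarging $d$, $N$ and $L$ by bounded factors), which makes Proposition \ref{vonNeumann} directly applicable. Choosing $\alpha$ small enough that $\nor{\nu-1}_{U^{2(s+1)}(\Z/N\Z)}$ is as small as needed, Proposition \ref{denseModel} yields, for each $i\in[t]$, a decomposition $\lambda_i = \tilde{\lambda}_i + e_i$ with $\tilde{\lambda}_i:[N]\to[0,1]$ nonnegative and $\nor{e_i}_{U^{s+1}}$ arbitrarily small. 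A telescoping argument, replacing $\lambda_i$ by $\tilde{\lambda}_i$ one slot at a time, introduces at each step an average of a product of $t$ functions all bounded by $\nu+1$, with one slot occupied by $e_i$; Proposition \ref{vonNeumann} applied with the measure $(\nu+1)/2$ (still pseudorandom, being a convex combination of pseudorandom measures) then shows each such error is $o(1)$, so that
\begin{equation*}
\E_{\mathbf{n}\in K\cap\Z^d}\prod_{i\in[t]}\lambda_i(\psi_i(\mathbf{n}))=\E_{\mathbf{n}\in K\cap\Z^d}\prod_{i\in[t]}\tilde{\lambda}_i(\psi_i(\mathbf{n}))+o(1).
\end{equation*}

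Next, I would apply Proposition \ref{prop:weak-reg} to each $\tilde{\lambda}_i$ with a small parameter $\varepsilon_0$ and take a common refinement $\CB=\CB_1\vee\cdots\vee\CB_t$ of the resulting $s$-factors; the refinement has at most $M_0^t$ atoms (where $M_0=M_0(\varepsilon_0,s)$), each $s$-measurable with a (somewhat worse) growth function $\Phi$. A further telescoping application of Proposition \ref{vonNeumann} (this time with the trivial measure $\nu\equiv 1$, since all functions involved are bounded by $2$) replaces each $\tilde{\lambda}_i$ by $\E[\tilde{\lambda}_i\mid\CB]$ up to $o(1)$. It remains to lower-bound $\E[\tilde{\lambda}_i\mid\CB](n)$ on each atom of non-negligible size. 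Call an atom $E\in\CB$ \emph{large} if $|E|/N\geq \eta_0$, for $\eta_0$ chosen small in terms of $M_0^t$, and let $\xi_E:\Z\to[0,1]$ be a degree-$\leq s$ nilsequence of complexity at most $\Phi(M_1)$ (for a parameter $M_1$ to be chosen) with $\nor{\1_E-\xi_E}_{L^2[N]}\leq 1/M_1$. Then $\E_n\xi_E(n)\geq\eta_0/2$, so choosing $Y\geq\Phi(M_1)$ and $\varepsilon\leq\eta_0/2$ activates the density hypothesis: $\E_n\lambda_i(n)\xi_E(n)\geq\delta\E_n\xi_E(n)$. The identity
\begin{equation*}
\E_n\tilde{\lambda}_i(n)\1_E(n)=\E_n\lambda_i(n)\xi_E(n)-\E_ne_i(n)\xi_E(n)+\E_n\tilde{\lambda}_i(n)\bigl(\1_E(n)-\xi_E(n)\bigr)
\end{equation*}
then lets us estimate the last term by $1/M_1$ (using $\tilde{\lambda}_i\leq 1$) and the middle term via the dual $U^{s+1}$ norm of the bounded-complexity nilsequence $\xi_E$ together with the smallness of $\nor{e_i}_{U^{s+1}}$. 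Both errors become $o(|E|/N)$ once $M_1$ is large and $\alpha$ is sufficiently small, so that $\E[\tilde{\lambda}_i\mid\CB]|_E\geq\delta-o(1)$ on every large atom.

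A simple counting argument completes the proof: the set of $\mathbf{n}\in K\cap\Z^d$ with $\psi_i(\mathbf{n})$ in some small atom for some $i\in[t]$ has size $O_{L,d,t}(M_0^t\eta_0 N^d)$ (since a fixed affine-linear form with bounded coefficients hits any given value at $O(N^{d-1})$ points of $[-N,N]^d$), which is $o(\eta N^d)$ by choice of $\eta_0$. On the complementary set the integrand $\prod_i \E[\tilde{\lambda}_i\mid\CB](\psi_i(\mathbf{n}))$ is at least $(\delta-o(1))^t$, and combining this with the two telescoping replacements gives the claimed lower bound of $0.99\delta^{t}$. The main technical obstacle is the density transfer encapsulated in the penultimate display: one must organise the parameter hierarchy---schematically $\varepsilon_0\gg\eta_0\gg 1/M_1\gg\alpha$, with $Y$ and $\varepsilon$ chosen after $\Phi(M_1)$ and $\eta_0/2$ respectively---so that all three error terms in the identity are genuinely negligible relative to $\delta$. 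A minor additional technicality is that Proposition \ref{vonNeumann} is stated for convex bodies contained in $[-N/4,N/4]^d$ rather than $[-N,N]^d$, which can be accommodated by rescaling $N$ to some $N'\asymp N$.
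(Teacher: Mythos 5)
Your proposal does not prove Proposition~\ref{vonNeumann}; it proves (a version of) Theorem~\ref{th:transference}, the transference principle, which is a different statement. Proposition~\ref{vonNeumann} is the \emph{generalised von Neumann} estimate --- an upper bound showing that the correlation $\E_{\mathbf{n}}1_{K'}(\mathbf{n})\prod_i f_i(\psi_i(\mathbf{n}))$ is negligible once the $f_i$ are majorised by a pseudorandom measure and at least one has small $U^{s+1}$ norm --- and it is an \emph{input} to the transference argument, not its conclusion. Your argument invokes Proposition~\ref{vonNeumann} explicitly and repeatedly (in both telescoping steps, once with the measure $(\nu+1)/2$ and once with the trivial measure), so as a purported proof of Proposition~\ref{vonNeumann} it would be circular.

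The paper itself supplies no proof of Proposition~\ref{vonNeumann}: it is quoted directly from Green and Tao, \cite[Proposition 7.1']{GT-linear-equations}, with a remark that the only change is to treat the pseudorandomness parameter $\varepsilon$ as an independent quantity instead of $o_{N\to\infty}(1)$. The actual content of that proof is the standard cascade of Cauchy--Schwarz/Gowers--Cauchy--Schwarz inequalities enabled by $s$-normal form, together with repeated applications of the linear forms condition to discharge the extraneous $\nu$-factors introduced by Cauchy--Schwarz; none of the machinery in your proposal (dense models, weak regularity, density in higher order Bohr sets) appears there, nor should it. If you intended to prove Theorem~\ref{th:transference}, your argument is broadly in line with the paper's, though the paper packages the key step as a single decomposition $\lambda_i=\lambda_i^{(1)}+\lambda_i^{(2)}$ (Proposition~\ref{prop:crux}) with $\lambda_i^{(1)}\ge(1-\rho)\delta$ pointwise, and does not pass through a common refinement of $s$-factors or an atom-by-atom counting argument as you do.
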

In the cited reference, the parameter $\varepsilon$ is itself $o_{N\rightarrow\infty}(1)$ but we make
it independent here, whence the slightly different statement.

We are now ready to state and prove a crucial lemma.
\begin{proposition}[Decomposition into a uniformly lower bounded and Gowers uniform components]
\label{prop:crux}
Let $s \geq 1$ and let $ N \geq 1$ be an integer. 
Let $\delta, \varepsilon, \rho$
be real constants in the interval $(0,1)$.
Then there exist quantities $\iota\in (0,1),Y>0,\eta\in(0,1)$ depending only on $s,\varepsilon,\rho,\delta$ such that the following holds.
Suppose that $\nu : [N]\rightarrow \R_{\geq 0}$
satisfies $\nor{\nu-1}_{U^{2s+2}(\Z/N\Z)}\leq \eta$, where
we naturally identify $[N]$ with $\Z/N\Z$.  Let $f : [N]\rightarrow \R_{\geq 0}$  be a function such that $ f(n)\leq \nu(n)$ pointwise. 
Further, suppose that
\begin{align*}
     \mathbb{E}_{n \in [N]}f(n)\xi(n)\geq \delta  \mathbb{E}_{n \in [N]}\xi(n)   
    \end{align*}
    for every nilsequence $\xi:\mathbb{Z}\to [0,1]$ of degree $\leq s$ and of complexity at most $Y$ that satisfies $\mathbb{E}_{n\leq N}\xi(n)\geq \iota N$.

Then there exists a decomposition
$f=f_3+f_4$ where
 $f_3\geq (1-\rho)\delta$ pointwise and
 $\nor{f_4}_{U^{s+1}(\Z/N\Z)}\leq \varepsilon$.
\end{proposition}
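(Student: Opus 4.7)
The approach combines the dense model theorem with the weak regularity lemma to replace $f$ by a step function adapted to an $s$-factor, and then to truncate from below at the value $(1-\rho)\delta$. First, I would apply Proposition~\ref{denseModel} at level $s+1$ (permitted since $\|\nu-1\|_{U^{2s+2}(\Z/N\Z)}\leq\eta$) to decompose $f=\tilde{f}+g$ with $0\leq \tilde{f}\leq 1$ and $\|g\|_{U^{s+1}(\Z/N\Z)}\leq \kappa$, for some auxiliary $\kappa>0$ to be chosen small (legitimate provided $\eta$ is small enough in terms of $\kappa$ and $s$). Then I would apply Proposition~\ref{prop:weak-reg} to $\tilde{f}$ with accuracy $\kappa$ at degree $s$, obtaining an $s$-factor $\CB$ with at most $M_0=O_{s,\kappa}(1)$ atoms and growth function $\Phi:\R\to\R$ such that $\|\tilde{f}-\E(\tilde{f}\vert\CB)\|_{U^{s+1}(\Z/N\Z)}\leq\kappa$. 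I would then define
\[ f_3(n):=\max\bigl(\E(\tilde{f}\vert\CB)(n),\,(1-\rho)\delta\bigr),\qquad f_4:=f-f_3, \]
so that $f_3\geq(1-\rho)\delta$ pointwise by construction.

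\textbf{Atomwise lower bound.} The central step is to show that on each atom $E\in\CB$ whose relative density $\mu_E:=|E|/N$ exceeds a threshold $\alpha>0$, the atom average of $\tilde{f}$ is at least $(1-\rho)\delta$; on such large atoms $f_3$ therefore coincides with $\E(\tilde{f}\vert\CB)$. Since $E$ is $s$-measurable with growth function $\Phi$, for any $M\geq 1$ there is a $[0,1]$-valued degree $\leq s$ nilsequence $\xi_E$ of complexity $\leq\Phi(M)$ with $\|1_E-\xi_E\|_{L^2[N]}\leq 1/M$, and hence $\E\xi_E\geq\mu_E-1/M\geq\alpha/2$ whenever $M\geq 2/\alpha$. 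Setting $Y:=\Phi(M)$ and $\iota:=\alpha/2$, hypothesis~(ii) yields $\E(f\xi_E)\geq\delta\E\xi_E\geq\delta\mu_E-\delta/M$. Writing
\[ \E(\tilde{f}\cdot 1_E)=\E(f\xi_E)-\E(g\xi_E)+\E(\tilde{f}\cdot(1_E-\xi_E)), \]
the last term has absolute value $\leq 1/M$ since $0\leq\tilde{f}\leq 1$, while $|\E(g\xi_E)|\leq C(s,Y)\kappa$ follows from the standard duality that $s$-step nilsequences of bounded complexity have bounded dual $U^{s+1}$ norm (a consequence of iterated Cauchy--Schwarz in the spirit of the proof underlying Proposition~\ref{vonNeumann}, after separating the mean of $\xi_E$). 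Choosing $M\gg 1/(\rho\delta\alpha)$ and $\kappa$ small enough in terms of $s,Y,\alpha,\rho,\delta$ makes each of these errors at most $\tfrac{\rho}{2}\delta\mu_E$, yielding $\E(\tilde{f}\cdot 1_E)\geq(1-\rho)\delta\mu_E$ as required.

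\textbf{Closing and main obstacle.} It remains to control $\|f_4\|_{U^{s+1}(\Z/N\Z)}$. Writing
\[ f_4=g+\bigl(\tilde{f}-\E(\tilde{f}\vert\CB)\bigr)+\bigl(\E(\tilde{f}\vert\CB)-f_3\bigr), \]
the first two pieces have $U^{s+1}$ norm at most $\kappa$ each, by construction. The third piece is bounded in absolute value by $2$ and supported on the union $S$ of small atoms, whose density is at most $M_0\alpha$; by the elementary inequality $\|h\cdot 1_S\|_{U^{s+1}(\Z/N\Z)}\leq\|h\|_\infty(|S|/N)^{1/2^{s+1}}$ (obtained by isolating the $\boldsymbol\omega=0$ vertex of the Gowers cube and bounding all remaining factors by $\|h\|_\infty$), its $U^{s+1}$ norm is at most $2(M_0\alpha)^{1/2^{s+1}}$. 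Picking $\alpha$ small enough in terms of $\varepsilon, M_0, s$ and $\kappa\leq\varepsilon/3$ then gives $\|f_4\|_{U^{s+1}}\leq\varepsilon$. The chief technical obstacle is the hierarchical dependence of parameters: $Y$ depends on $M$, which depends on $\alpha$, which depends on $M_0=M_0(s,\kappa)$, so the duality constant $C(s,Y)$ entering the atomwise step is ultimately a function of $\kappa$. One must therefore choose $\kappa$ small enough to beat $C(s,Y)\kappa$ against $\rho\delta\alpha$; this is possible because all intermediate quantities are (explicit, albeit tower-type) functions of $s,\varepsilon,\rho,\delta$.
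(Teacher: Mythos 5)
Your overall structure mirrors the paper's: dense model theorem followed by the weak regularity lemma, then a truncation at level $(1-\rho)\delta$, with the atomwise lower bound on large atoms certified by the hypothesis on nilsequences. The truncation by a pointwise max is a cosmetic variant of the paper's shift-by-$\delta$ on small atoms, and your Gowers-norm bound on the residual supported on small atoms (isolating the $\boldsymbol\omega=0$ vertex) is a perfectly valid substitute for the paper's $L^1$-to-$U^{s+1}$ estimate. These parts are fine.

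The genuine gap is in the atomwise step, where you claim $|\E(g\xi_E)|\leq C(s,Y)\kappa$ because ``$s$-step nilsequences of bounded complexity have bounded dual $U^{s+1}$ norm.'' That is not a standard fact and, as stated, is the precise difficulty the argument must confront rather than a black box. A degree-$s$ Lipschitz nilsequence $\xi_E$ does \emph{not} directly have $\|\xi_E\|_{U^{s+1}[N]^*}=O_{s,Y}(1)$; what one has instead is \cite[Proposition 11.2]{GT-linear-equations}, which for any threshold $\kappa_0>0$ gives a splitting $\xi_E=\xi_1+\xi_2$ with $\|\xi_1\|_{U^{s+1}[N]^*}\leq K(\kappa_0,Y)$ and $\|\xi_2\|_\infty\leq\kappa_0$. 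The resulting bound is therefore $|\E(g\xi_E)|\leq K\kappa+\kappa_0\,\E|g|$, and the second term is \emph{not} proportional to $\kappa$: since $g=f-\tilde f$ is only bounded by $\nu+1$, controlling $\E|g|\leq 2+\|\nu-1\|_{U^1}=O(1)$ uses the majorant hypothesis, not the Gowers-norm bound. Your parameter discussion (``choose $\kappa$ small enough to beat $C(s,Y)\kappa$'') has no slot for this $\kappa_0$-sized error, so as written the argument does not close. The fix is exactly what the paper does: first fix $\kappa_0$ small in terms of $\rho,\delta,\alpha$, which determines $K$, and only then choose $\kappa$ small in terms of $K$; this also forces you to write the correlation as $\E(f\xi_E)-\E(g\xi_E)$ and split $\xi_E$ once, rather than treating the $g$-term as if $\xi_E$ itself lay in the dual ball.

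A smaller point: in your atomwise step you invoke hypothesis (ii) with $\xi_E$ and then relate $\E(\tilde f\cdot 1_E)$ to $\E(f\xi_E)$. Make sure you actually take $\xi_E$ to be the nilsequence approximating $1_E$ (as you do) and \emph{also} verify that hypothesis (ii) is being invoked for $f$, not $\tilde f$; you do this correctly, but note the correction terms then include both $\E(g\xi_E)$ and $\E(\tilde f(1_E-\xi_E))$, both of which you must budget into $\tfrac{\rho}{2}\delta\mu_E$. Once the dual-norm issue above is repaired, your bookkeeping is otherwise sound and matches the paper's.
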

\begin{proof}
Without loss of generality, we may assume that $\varepsilon$ is small enough in terms of $\delta$ and $\rho$. 
Let $\varepsilon'\leq \varepsilon/3$ be a sufficiently small constant, to be determined later.
Applying Proposition~\ref{denseModel}, we may write $f=f_1+f_2$
where $f_1$ takes its values in $[0,1]$ and $\nor{f_2}_{U^{s+1}(\Z/N\Z)}\leq \varepsilon'$.
Then using Proposition \ref{prop:weak-reg} on $f_1$,
we decompose $f_1=h+g$
where $h=\E[f_1|\CB]$ and  $\CB$ is an $s$-factor of complexity $O_{s,\varepsilon}(1)$ and growth function $\Phi$,
and $\|g\|_{U^{s+1}(\Z/N\Z)}\leq \varepsilon/3$.
The growth function $\Phi$ of $\CB$
depends only on $\varepsilon$ and $s$.

Note that 
$h=\sum_{E\in\CB}c_E1_E$
where $c_E=\frac{1}{\abs{E}}\sum_{n\in [N]}f_1(n)1_E(n)$
for any atom
$E$ of $\CB$.
Fix 
$c=(\varepsilon/3)^{2^{s+1}}/\abs{\CB}$, so $c^{-1}=O_{s,\varepsilon}(1)$.
We effect the splitting
$$h=\sum_{E\in\CB}(c_E+\delta 1_{\abs{E}< cN})1_E-\sum_{\substack{E\in\CB\\
\abs{E}< cN}}\delta1_E.$$
We denote by $h_1$ the first sum and $h_2$ the second one.

Since $\delta\in [0,1]$, we see
that $\nor{h_2}_{L^1[N]}\leq c|\CB| =(\varepsilon/3)^{2^{s+1}}$.
Crudely estimating by the triangle inequality, this implies that $\nor{h_2}_{U^{s+1}(\Z/N\Z)}\leq \varepsilon/3$.
Now write $f_3=h_1$ and $f_4=g+f_2+h_2$. By the triangle inequality for Gowers norms, we have
\begin{align*}
\nor{f_4}_{U^{s+1}(\Z/N\Z)}\leq 3\cdot\varepsilon/3=\varepsilon.
\end{align*}
Our aim is then to show that 
\begin{align}\label{eq: ce}
c_E\geq (1-\rho)\delta,\quad \textnormal{whenever}\quad |E|\geq cN,    
\end{align}
after which $f_3=h_1\geq (1-\rho)\delta$ pointwise follows. 

Fix a large enough constant $M>0$ in terms of $c,\delta, \rho$ (explicitly, we may take $M=4/(c\delta \rho)$) 
and an atom $E\in \CB$ satisfying $\abs{E}\geq cN$.
By definition of an $s$-factor, 
we may write $1_E=\xi+g_{\textnormal{sml}}$
where $\nor{g_{\textnormal{sml}}}_{L^2[N]}\leq 1/M$
and $\xi$ (depending on $E$) is a nilsequence of degree at most $s$ whose complexity is  bounded by $Y:=\Phi(M)=O_{s,\varepsilon,\rho,\delta}(1)$.
By the Cauchy--Schwarz inequality,
we have $|\sum_{n\in [N]}f_1(n)g_{\textnormal{sml}}(n)|\leq N/M$.
Therefore,
\begin{align*}
c_E\geq \frac{1}{\abs{E}}\sum_{n\in [N]}f_1(n)\xi(n)-1/(cM).    
\end{align*}
We now recall that $f=f_1+f_2$, so that \eqref{eq: ce} follows once we show that
\begin{align}\label{eq: lowerbound}
\sum_{n\in [N]}f(n)\xi(n)\geq \delta \abs{E}(1-1/(cM))\quad \textnormal{and}\quad  \Big|\sum_{n\in [N]}f_2(n)\xi(n)\Big|\leq \rho\delta \abs{E}/2,   
\end{align}
upon setting $M=4/(c\delta\rho)$.
First we bound the correlation of $f_2$ and $\xi$.
Since $f_2$ has small $U^{s+1}$ norm, it would be
convenient to replace $\xi$ by a function of bounded
dual $U^{s+1}$ norm.
To achieve this, 
we invoke\footnote{\label{foot1}In the cited reference,
written at a time where the theory of nilsequences was just emerging, the result is stated  for linear nilsequences. However, nowadays we know that any polynomial nilsequence may be realized as a linear one, see \cite[Appendix C]{GTZ}. Also the result is stated in terms of interval Gowers norms, but the proof naturally yields $\nor{\xi_1}_{U^{s+1}(\Z/N\Z)^*}\leq K$ first as it moves from intervals to cyclic groups.}  \cite[Proposition 11.2]{GT-linear-equations},
which yields for any $\kappa>0$ a splitting
$\xi=\xi_1+\xi_2$
where $\nor{\xi_1}_{U^{s+1}(\Z/N\Z)^*}\leq K$ for some $K=O_{\kappa,Y}(1)$,  while $\nor{\xi_2}_{\infty}\leq \kappa$.
We infer that
\begin{align}\label{eq: lowerbound1}
\Big|\sum_{n\in [N]}f_2(n)\xi_1(n)\Big|\leq \nor{f_2}_{U^{s+1}(\Z/N\Z)}\nor{\xi_1}_{U^{s+1}(\Z/N\Z)^*}N\leq \varepsilon' KN.
\end{align}
Further, since $\abs{f_2}\leq \nu +1$ pointwise,
\begin{align}\label{eq: lowerbound2}
\Big|\sum_{n\in [N]}f_2(n)\xi_2(n)\Big|\leq 
\nor{\xi_2}_{\infty}\sum_{n\in [N]}(\nu(n)+1)\leq (2+\lvert\E_{n\in[N]}(\nu(n)-1)\rvert)\nor{\xi_2}_{\infty}N.
\end{align}
Recall that $\lvert\E_{n\in[N]}(\nu(n)-1)\rvert)=\nor{\nu-1}_{U^1(\Z/N\Z)}\leq \nor{\nu-1}_{U^{2s+2}(\Z/N\Z)}\leq\eta<1$.
We conclude that $\Big|\sum_{n\in [N]}f_2(n)\xi_2(n)\Big|\leq 3\kappa N$.

Now, if we choose $\kappa=c\rho\delta/12$ and $\varepsilon'=\rho\delta c/(4K)$ (thus $(\varepsilon')^{-1}=O_{\rho,s,\delta,\varepsilon}(1)$), we have $\varepsilon' K+3\kappa<c\rho\delta/2$, so by combining \eqref{eq: lowerbound1} and \eqref{eq: lowerbound2} with the fact that $N\leq c^{-1}\abs{E}$, we obtain the required bound \eqref{eq: lowerbound} for $f_2$.

It remains to be shown that the correlation of $f$ and $\xi$ obeys the lower bound in \eqref{eq: lowerbound}. By the definition of $\xi$, we have \begin{align*}
\sum_{n\in [N]}\xi(n)= \abs{E}-\sum_ng_{\textnormal{sml}}(n)\geq \abs{E}(1-1/(cM))\geq |E|/2\geq cN/2,    
\end{align*}
so recalling the ``denseness in higher order Bohr sets'' hypothesis of Proposition \ref{prop:crux} (letting $\iota=c/2$ there) and the fact that the complexity 
of $\xi$ is at most $Y$, the desired estimate \eqref{eq: lowerbound} follows. This was enough to complete the proof.
\end{proof}

Now we may prove Theorem \ref{th:transference}. 
\begin{proof}[Proof of Theorem \ref{th:transference}]
In view of Proposition \ref{prop:newsyst}, we may assume that the system $\Psi$
is in $s$-normal form. Also, upon replacing $N$ by $4N$ (and therefore $\eta$ by $4^{-d}\eta$), we may assume that $K\subset [-N/4,N/4]$.
Fix $\kappa>0$ small enough (to be determined later). Let $\rho$ be small enough in terms of $t$ (say $\rho=1/(10000t)$). By hypothesis (i), if $N$ is large enough, we may apply Proposition \ref{prop:crux}, thus obtaining a decomposition $\lambda_i=\lambda_i^{(1)}+\lambda_i^{(2)}$ for each $i\in[t]$
where $\lambda_i^{(1)}\geq (1-\rho)\delta$ pointwise and
$\nor{\lambda_i^{(2)}}_{U^{s+1}(\Z/N\Z)} \leq\kappa$.
Inserting this decomposition in the left-hand side of
\eqref{equ1}, we obtain a splitting of the average into
$2^t$ terms:
\begin{align*}
\mathbb{E}_{\mathbf{n}\in K\cap\Z^d}\prod_{j=1}^t \lambda_j(\psi_j(\mathbf{n}))=\sum_{a_1,a_2,\ldots, a_t\in \{1,2\}}\mathbb{E}_{\mathbf{n}\in K\cap\Z^d}\prod_{j=1}^{t}\lambda_j^{(a_j)}(\psi_j(\mathbf{n})).   
\end{align*}
One of the $2^t$ terms involves only the functions
$\lambda_i^{(1)}$; since $\lambda_i^{(1)}$ is pointwise lower bounded by $(1-\rho)\delta$, this term is at least $$(1-\rho)^{t}\delta^t\geq 0.999\delta^{t},$$
since $\rho=1/(10000t)$.  
Any other term involves at least one copy of a uniform
function $\lambda_i^{(2)}$. 
Let $\mathbf{a}\in \{1,2\}^t\setminus\{(1,\ldots,1)\}$.
Since $K\subset [-N/4,N/4]^d$ and $\Psi(K)\subset [1,N]^t$, one may identify $K$ with a subset of $(\Z/N\Z)^d$, which we also denote by $K$, and write
\begin{equation}
\label{eq:transferCyclic}
\mathbb{E}_{\mathbf{n}\in K\cap\Z^d}\prod_{j=1}^{t}\lambda_j^{(a_j)}(\psi_j(\mathbf{n}))
=\mathbb{E}_{\mathbf{n}\in(\Z/N\Z)^d}1_K(\mathbf{n})\prod_{j=1}^{t}\lambda_j^{(a_j)}(\psi_j(\mathbf{n})).
\end{equation}
According to Proposition \ref{vonNeumann} (for which we need $M$ to be sufficiently large in terms of
$d,t,L$ and the fact that $\Psi$ is in $s$-normal form),
the right-hand side of equation \eqref{eq:transferCyclic} is bounded by $o_{N\rightarrow \infty;\kappa}(1)+o_{\alpha\rightarrow 0;\kappa}(1)+o_{\kappa\rightarrow 0}(1)$. Therefore, choosing first $\kappa$ appropriately, and then $N$ sufficiently large and $\alpha$ sufficiently small, we conclude the proof.
\end{proof}
The rest of the paper is devoted to establishing the
hypotheses (i) and (ii) of Theorem~\ref{th:transference}
for the functions $\theta_1$ and $\theta_2$ (and $\theta_3$ in Section \ref{sec: mainthm}); we start with hypothesis (i).

\section{\texorpdfstring{$W$}{W}-trick and Pseudorandom majorants}\label{sec: w}
\subsection{\texorpdfstring{$W$}{W}-trick}
We wish to apply Theorem \ref{th:transference} to prove our main theorem, but an initial problem is that the indicator functions of almost twin primes are not bounded by a pseudorandom majorant, as they are biased modulo small primes. We will first have
to remove these biases modulo small primes to obtain a pseudorandomly majorized function. 

We introduce the general framework we will work with in this section.
Let $w$ be an integer and $W=W(w)=\prod_{p\leq w}p$.
Let $\rho\in (0,1)$, $r\geq 1$, and let $\mathcal{H}=\{h_1,\ldots,h_r\}\subset\N$ be a set of $r$ pairwise distinct integers
and let $\theta=\theta_\mathcal{H} :\N\rightarrow\R_{\geq 0}$ be any function supported on the set 
$$\{n\in \mathbb{N}:\,\, p\mid n+h_j\implies p>w\,\, \textnormal{for all}\,\, j\in [r]\}$$
and satisfying the upper bound $$\theta(n)\leq \log^r(n+2)$$ 
for all $n\geq 0$.
Observe that the functions $\theta_1$ and $\theta_2$ our main theorem deals with have these properties (with $r=2$ in the case of $\theta_1$ and $r=m$ in the case of $\theta_2$).
Given integers $q>0$ and $b$, let 
\begin{align}\label{eq: theta}
\theta_{q,b}(n):=\left(\frac{\varphi(q)}{q}\right)^r\theta(qn+b).
\end{align}

\begin{proposition}[Reduction to $W$-tricked sums] \label{prop: wtrick} Let the notation be as above. Also let $\eta>0$, $\gamma>0$, $N,L,d,t\geq 1$. 
Let $\Psi=(\psi_1,\ldots,\psi_t):\mathbb{Z}^d\to \mathbb{Z}^t$  be a finite complexity system of affine-linear forms.
Suppose that $\Psi_{\mathcal{H}}:=(\psi_i+h_j)_{i\in[t],j\in[r]}$ is admissible
 and that the linear coefficients of $\Psi$ as well as the elements of $\mathcal{H}$ are bounded by $L$.
Let $K\subset [-N,N]^d$ be a convex body satisfying
$\mathrm{Vol}(K)\geq \eta N^d$ and
 $\Psi(K)\subset [1,N]^t$.
 Suppose that $\theta$ satisfies
\begin{equation}
\label{eq:Wtricked}
\sum_{\substack{\mathbf{n}\in 
\Z^d\\W\mathbf{n}+\mathbf{a}\in K}}\prod_{i=1}^t\theta_{W,c_i(\mathbf{a})}(\psi_i'(\mathbf{n}))\geq \gamma W^{-d}\Vol(K),
\end{equation}
for each $\mathbf{a}\in A$, where
\begin{align*}
A=A_{\Psi,\mathcal{H}} 
=\{\mathbf{a}\in[W]^d: \forall (i,j)\in[t]\times[r]\quad (\psi_i(\mathbf{a})+h_j,W)=1\}
\end{align*}
and for each $i\in [t]$, the integer $c_i(\mathbf{a})\in [W]$ and the form $\psi'_i : \Z^d\rightarrow\Z$ 
are uniquely defined by the relation $\psi_i(W\mathbf{n}+\mathbf{a})=W\psi'_i(\mathbf{n})+c_i(\mathbf{a})$.
Then, provided that $w$ is large enough in terms of $d,t,L$, we have
\begin{equation}
 \label{eq:linEqTwinPrimes2}
 \sum_{\mathbf{n}\in K\cap \Z^d}\prod_{i=1}^t\theta(\psi_i(\mathbf{n}))\geq \frac{\gamma}{2}\cdot \prod_{p}\beta_p\cdot \Vol(K),
 \end{equation} 
 where the local factors $\beta_p=\beta_p(\Psi_{\cH})$ are as defined in Definition \ref{def:locFac}.
\end{proposition}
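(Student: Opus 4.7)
My plan is to partition the sum $S = \sum_{\mathbf{n} \in K \cap \mathbb{Z}^d} \prod_{i=1}^t \theta(\psi_i(\mathbf{n}))$ according to residues $\mathbf{a} \in [W]^d$ modulo $W$, writing $\mathbf{n} = W\mathbf{m} + \mathbf{a}$. From $\psi_i(\mathbf{n}) = W\psi'_i(\mathbf{m}) + c_i(\mathbf{a})$ one gets $\psi_i(\mathbf{n}) + h_j \equiv \psi_i(\mathbf{a}) + h_j \pmod{W}$, and since $\theta$ is supported on integers $n$ with every $n+h_j$ coprime to $W$, the contribution vanishes unless $\mathbf{a} \in A$. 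For $\mathbf{a} \in A$, the rescaling identity \eqref{eq: theta} reads $\theta(Wn + c_i(\mathbf{a})) = (W/\varphi(W))^{r} \theta_{W, c_i(\mathbf{a})}(n)$, so
\[
S = \left(\frac{W}{\varphi(W)}\right)^{rt} \sum_{\mathbf{a} \in A} \sum_{\substack{\mathbf{m} \in \mathbb{Z}^d \\ W\mathbf{m}+\mathbf{a} \in K}} \prod_{i=1}^t \theta_{W, c_i(\mathbf{a})}(\psi'_i(\mathbf{m})).
\]

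Next, I apply the hypothesis \eqref{eq:Wtricked} to lower bound each inner sum by $\gamma W^{-d}\Vol(K)$, yielding $S \geq \gamma (W/\varphi(W))^{rt} \cdot |A| W^{-d} \cdot \Vol(K)$. To evaluate the density $|A|/W^d$, the Chinese remainder theorem gives $|A|/W^d = \prod_{p \leq w} |A_p|/p^d$, where $A_p \subset (\mathbb{Z}/p\mathbb{Z})^d$ is the analogue of $A$ modulo $p$. Unwinding Definition \ref{def:locFac} yields $\beta_p(\Psi_{\mathcal{H}}) = (p/\varphi(p))^{rt} |A_p|/p^d$, hence $|A|/W^d = (\varphi(W)/W)^{rt} \prod_{p \leq w} \beta_p(\Psi_{\mathcal{H}})$. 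The two factors of $W/\varphi(W)$ now cancel exactly, leaving
\[
S \geq \gamma \prod_{p \leq w} \beta_p(\Psi_{\mathcal{H}}) \cdot \Vol(K).
\]

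It remains to pass from the truncated product to the full singular series $\prod_p \beta_p(\Psi_{\mathcal{H}})$. The forms in $\Psi_{\mathcal{H}}$ are pairwise non-proportional over $\mathbb{Q}$: for $i = i'$ this uses the distinctness of the $h_j$, while for $i \neq i'$ it follows from the finite complexity of $\Psi$; moreover the relevant $2 \times 2$ minors of coefficients, and the differences $h_j - h_{j'}$, are bounded in modulus by $O_{d,t,L}(1)$. Hence, for every prime $p$ exceeding a threshold $p_0 = p_0(d,t,L)$, the forms of $\Psi_{\mathcal{H}}$ remain pairwise non-proportional modulo $p$; thus $t_p = rt$ in Lemma \ref{locfac} and $\beta_p(\Psi_{\mathcal{H}}) = 1 + O_{d,t,L}(p^{-2})$. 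This makes the Euler product absolutely convergent and ensures that, once $w$ is large enough in terms of $d,t,L$, the tail satisfies $\prod_{p > w} \beta_p(\Psi_{\mathcal{H}}) \leq 2$, whence $\prod_{p \leq w} \beta_p \geq \tfrac{1}{2} \prod_p \beta_p$, giving the required bound \eqref{eq:linEqTwinPrimes2}.

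The argument is essentially a routine bookkeeping computation; the main subtle point is matching the rescaling prefactors $(W/\varphi(W))^{r}$ arising from the definition of $\theta_{W, c_i(\mathbf{a})}$ with those produced by the CRT evaluation of $|A|$, so as to recover precisely the local factors $\beta_p(\Psi_{\mathcal{H}})$ at the end. The fact that the resulting singular series converges at all is where the finite complexity hypothesis on $\Psi$ (together with the admissibility of $\Psi_{\mathcal{H}}$) is genuinely used, through Lemma \ref{locfac}.
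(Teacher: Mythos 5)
Your proof is correct and follows essentially the same route as the paper's: decompose $K\cap\Z^d$ by residues modulo $W$, observe that the support condition on $\theta$ kills all residues outside $A_{\Psi,\mathcal{H}}$, pull out the rescaling factor $(W/\varphi(W))^{rt}$, apply the hypothesis~\eqref{eq:Wtricked} atom by atom, identify $(W/\varphi(W))^{rt}|A|W^{-d}$ with $\prod_{p\le w}\beta_p$ via the Chinese remainder theorem, and then control the tail $\prod_{p>w}\beta_p$ using Lemma~\ref{locfac}. The only cosmetic difference is that the paper works explicitly with the dilated bodies $K_{\mathbf{a}}$ and phrases the tail estimate as $\prod_{p\le w}\beta_p = (1+O_{d,t,L}(1/w))\prod_p\beta_p$, while you argue via $\prod_{p>w}\beta_p\le 2$; these are equivalent.
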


\begin{proof}
We write
$$
\Z^d\cap K =\bigcup_{\mathbf{a}\in [W]^d} (\Z^d\cap (W K_\mathbf{a}+\mathbf{a})) ,
$$
where $$K_\mathbf{a}:=\{\mathbf{x}\in \R^d: W \mathbf{x}+\mathbf{a}\in K\}$$
is again a convex body.
Putting $$F(\mathbf{n}):=\prod_{j=1}^t\theta(\psi_j(\mathbf{n}))$$
we can write the left-hand side of \eqref{eq:linEqTwinPrimes2} as
\begin{equation}
\label{sumovercongclasses}
\sum_{\mathbf{n}\in \Z^d\cap K}F(\mathbf{n})=\sum_{\mathbf{a}\in [W]^d}\sum_{\mathbf{n}\in 
\Z^d\cap K_\mathbf{a}}F(W\mathbf{n}+\mathbf{a}).
\end{equation}

We note that if $\psi_i(\mathbf{a})+h_j$ is not coprime to $p$ for some $i\in [t]$, some $j\in[r]$
 and some prime $p\leq w$, 
 then for each $\mathbf{n}\in K_\mathbf{a}\cap\Z^d$ we have
$F(W\mathbf{n}+\mathbf{a})=0$: indeed, in that case, the integer
$\psi_i(W\mathbf{n}+\mathbf{a})+h_j$ has a prime factor $p\leq w$, hence does not belong to the support of $\theta$. Thus, the residues $\mathbf{a}$  which 
bring a nonzero contribution to the right-hand side of
\eqref{sumovercongclasses}
are all mapped by
$\Psi$ to tuples $(b_1,\ldots,b_{t})$ for which $b_i+h_j$ is coprime to $W$ for all $i\in[t],j\in[r]$. 

Recalling the definitions of $A=A_{\Psi,\mathcal{H}}$, the integers $c_i(\mathbf{a})$, the forms $\psi_i'$ and $\theta_{W,b}$,
we can then rewrite equation \eqref{sumovercongclasses} as
\begin{equation}
\label{sumovercongclasses2}
\sum_{\mathbf{n}\in \Z^d\cap K}F(\mathbf{n})=\left(\frac{W}{\varphi(W)}\right)^{rt}\sum_{\mathbf{a}\in A}\sum_{\mathbf{n}\in 
\Z^d\cap K_\mathbf{a}}\prod_{i=1}^t\theta_{W,c_i(\mathbf{a})}(\psi_i'(\mathbf{n})).
\end{equation}
By our assumption \eqref{eq:Wtricked}, we have
\begin{equation}
\label{eq:Wtricked2}
\sum_{\mathbf{n}\in 
\Z^d\cap K_\mathbf{a}}\prod_{i=1}^t\theta_{W,c_i(\mathbf{a})}(\psi_i'(\mathbf{n}))\geq \gamma W^{-d}\Vol(K)=\gamma \Vol(K_\mathbf{a})
\end{equation}
for each $\mathbf{a}\in A$, so to obtain the conclusion \eqref{eq:linEqTwinPrimes2} it suffices to prove that
\begin{equation}
\label{eq:Apsi}
\left(\frac{W}{\varphi(W)}\right)^{rt}\abs{A_{\Psi,\mathcal{H}}}\geq \frac{1}{2}W^d\prod_{p}\beta_p.
\end{equation}
Note that by the Chinese remainder theorem we have 
$$\left(\frac{W}{\varphi(W)}\right)^{rt}\abs{A_{\Psi,\mathcal{H}}}=W^d\prod_{p\leq w}\beta_p.$$

Lemma \ref{locfac} implies that $\beta_p=1+O_{d,t,L}(p^{-2})$ whenever $\abs{\cH \PMod{p}}=r$ (which is the case whenever $w>H$), and $\beta_p>0$ for any $p$ since $\Psi_\mathcal{H}$ is an admissible system.
Therefore $\prod_p\beta_p$ is convergent and if $w>H$ we have $\prod_{p\leq w}\beta_p=(1+O_{d,t,L}(1/w))\prod_p \beta_p$.
Taking $w$ large enough in terms of $d,t,L$, this concludes the proof of Proposition \ref{prop: wtrick}.
\end{proof}
In fact Lemma \ref{locfac} implies that $\beta_p=1+O(p^{-1})$ and $\beta_p=1+O(p^{-2})$ except when $p\mid \prod_{i,j}(h_i-h_j)$. 
Combining this with the fact that, for any integer $q$ having $z$ prime factors,
we have 
$$\prod_{\substack{w<p\\p\mid q}}(1+O(p^{-1}))\leq \prod_{w<p<w+z}(1+O(p^{-1}))=O(\log\log q/\log w),$$ we infer
$
\prod_{p> w}\beta_p\ll O(\log\log H/\log w)
$
so the weaker hypothesis $H\leq \exp(w^{O(1)})$ could suffice instead of $w>H$ at the cost of replacing $1/2$ by a worse constant.

Also we note that the system $\Psi'$ introduced above differs from $\Psi$ only in the constant term,
and so it is of finite complexity whenever $\Psi$ is.

\subsection{Pseudorandom majorants}\label{sub: pseudo}

In order to prove Theorem \ref{th:linEqTwinPrimes}, it remains to establish the lower bound \eqref{eq:Wtricked}
when $\theta$ is either $\theta_1$ or $\theta_2$, which we will do by invoking Theorem \ref{th:transference}. In order to appeal to this theorem, we need to supply a pseudorandom majorant for the function
$\theta_{W,b}$ where $b$ is coprime to $W$. The only properties of $\theta$ that we need for this construction are that it is supported on the set
$$\{n\in \mathbb{N}:\,\, p\mid n+h_j\implies p>n^{\rho}\,\, \textnormal{for all}\,\, j\in [r]\}$$
and satisfies $0\leq \theta(n)\leq \log^r(n+2)$ (and these are satisfied for $\theta_1,\theta_2$ with $r=2,m$, respectively). Let
\begin{align}\label{equ20}
B_{\mathcal{H}}:=\{b\in \N:\forall j\in [r]\,\,\, (b+h_j,W)=1\}.
\end{align}
The next proposition provides us with a pseudorandom majorant. 
\begin{proposition}[Pseudorandom majorants]\label{prop: pseudo}
Let $M\geq 1$ and $s$ be integers. Let $\epsilon>0$.
Assume that $N$ and $w$ are large enough in terms of $(M,\varepsilon)$
and satisfy $w\leq\log\log N$.
Let $\mathbf{b}=(b_1,\ldots,b_s)$ be in $B_{\mathcal{H}}^s$ satisfy $\abs{b_i-b_j}\leq M$ for any $(i,j)\in [s]^2$. 
Suppose also that $\theta$ is as above.
Then there is an $(M,\varepsilon)$-pseudorandom measure $\nu_{\mathbf{b}} :\Z/N\Z\rightarrow\R_{\geq 0}$
and a constant $c\in (0,1)$ depending on $M$ only
such that $$\theta_{W,b_1}(n)+\cdots+\theta_{W,b_s}(n)\ll_{M} \nu_{\mathbf{b}}(n)$$
for all $n\in [N^c,N]\subset \Z/N\Z$.
\end{proposition}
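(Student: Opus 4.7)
The plan is to construct $\nu_{\mathbf{b}}$ as an $s$-average of Goldston--Pintz--Yıldırım--Green--Tao Selberg sieve majorants, one for each shift $b_i$. Fix a smooth cutoff $\chi : \R_{\geq 0} \to [0,1]$ with $\chi(0) = 1$ and support in $[0,1]$, let $c \in (0,1)$ be a constant to be chosen in terms of $M, \rho, r$, and set $R := N^{c\rho/2}$. For each $b \in B_{\mathcal{H}}$, define
\[
\widetilde{\nu}_{b}(n) := \alpha_W \prod_{j=1}^{r} \Lambda_{R,\chi}(Wn + b + h_j)^{2}, \qquad \Lambda_{R,\chi}(m) := \log R \sum_{\substack{d \mid m \\ d \leq R}} \mu(d)\, \chi\!\left(\frac{\log d}{\log R}\right),
\]
where the normaliser $\alpha_W$ (of order $(W/\varphi(W))^{r}(\log R)^{-r}$) is chosen so that $\E_{n \in [N]} \widetilde{\nu}_b(n) = 1 + o_{N\to\infty}(1)$ via the standard Selberg calculation. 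Then set $\nu_{\mathbf{b}}(n) := s^{-1} \sum_{i=1}^{s} \widetilde{\nu}_{b_i}(n)$; this is nonnegative and satisfies $\E_{n \in [N]} \nu_{\mathbf{b}}(n) = 1 + o(1)$.

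The pointwise majorisation is direct. If $n \in [N^{c}, N]$ and $\theta_{W, b_i}(n) \neq 0$, then each $Wn + b_i + h_j$ has no prime factor $\leq (Wn + b_i)^{\rho} \geq N^{c\rho} > R$, so only $d = 1$ contributes and $\Lambda_{R,\chi}(Wn + b_i + h_j) = \log R$. Hence $\widetilde{\nu}_{b_i}(n) \asymp_{\chi, r} (W/\varphi(W))^{r}(\log R)^{r}$, while $\theta_{W, b_i}(n) \leq (\varphi(W)/W)^{r} \log^{r}(Wn + b_i + 2) \ll (\varphi(W)/W)^{r}(\log N)^{r}$. Combining and summing over $i$ (noting that WLOG the $b_i$ are distinct, so $s \leq M + 1$ by the assumption $|b_i - b_j| \leq M$) yields the required bound $\sum_i \theta_{W, b_i}(n) \ll_{M, \rho, r} \nu_{\mathbf{b}}(n)$ on $[N^{c}, N]$.

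The heart of the argument is verifying the $(M, \varepsilon)$-linear forms condition. For any system $\Phi = (\phi_1, \ldots, \phi_t)$ on $\Z^{d}$ with $t, d \leq M$ and linear coefficients bounded by $M$, expanding the product yields
\[
\E_{\mathbf{n} \in (\Z/N\Z)^{d}} \prod_{i=1}^{t} \nu_{\mathbf{b}}(\phi_i(\mathbf{n})) = s^{-t} \sum_{\mathbf{k} \in [s]^{t}} \E_{\mathbf{n}} \prod_{i=1}^{t} \widetilde{\nu}_{b_{k_i}}(\phi_i(\mathbf{n})).
\]
Each inner expectation is a $2tr$-fold product of Selberg factors at the enlarged system $(W\phi_i + b_{k_i} + h_j)_{(i,j) \in [t] \times [r]}$, whose coefficients are bounded in terms of $M, L, W$. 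Applying the standard Goldston--Pintz--Yıldırım contour computation (as in \cite[Section 9]{GT-kAPinPrime} or \cite[Appendix D]{GT-linear-equations}) evaluates this to $\prod_p \beta_p^{(\mathbf{k})}\bigl(1 + o_{N\to\infty}(1)\bigr)$; at primes $p \leq w$, the coprimality condition $\mathbf{b} \in B_{\mathcal{H}}^{s}$ and the form of the shifts force $\beta_p^{(\mathbf{k})} = 1$, while at $p > w$ Lemma~\ref{locfac} gives $\beta_p^{(\mathbf{k})} = 1 + O_{M, L}(p^{-2})$. The full Euler product is thus $1 + O_{M, L}(1/w)$; choosing $c$ small enough in terms of $M$ that the contour argument applies, and $N, w$ large enough in terms of $M, \varepsilon$, each inner average is $1 + O(\varepsilon)$, and so is the full one.

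The principal obstacle is ensuring that for every $\mathbf{k} \in [s]^{t}$, the enlarged system $(W\phi_i + b_{k_i} + h_j)$ retains finite complexity, which is required for the contour argument to yield a convergent Euler product. Finite complexity of $\Phi$ together with the bounds $|b_i - b_{i'}| \leq M$ and $h_j \leq L$ guarantee that proportionalities among distinct forms can only occur modulo primes $p$ bounded in terms of $M, L$, so taking $w$ larger than this bound eliminates them. The remaining technicalities---uniformity in $\mathbf{k}$, tail estimation of the Euler product, and the smoothness of $\chi$ needed for the contour shift---are standard.
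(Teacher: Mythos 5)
Your construction of $\nu_{\mathbf b}$ as an $s$-average of $W$-tricked Selberg majorants, and your verification of the pointwise bound on $[N^c,N]$, follow the same route as the paper (apart from a sign slip in the normaliser, which should be $\alpha_W \asymp (\varphi(W)/W)^r(\log R)^{-r}$, not $(W/\varphi(W))^r(\log R)^{-r}$; with your version the mean of $\widetilde\nu_b$ would be $(W/\varphi(W))^{2r}\to\infty$).

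The genuine gap is in the verification of the linear forms condition. That condition is an average over $(\Z/N\Z)^d$, and $\widetilde\nu_b$ is a function on $\Z/N\Z$ defined through the representative $\tilde n\in[N]$. The map $\mathbf n\mapsto\widetilde{\phi_i(\mathbf n)}$ (reduction of $\phi_i(\mathbf n)$ into $[N]$) is only \emph{piecewise} affine-linear, so the inner expectations
$\E_{\mathbf n\in(\Z/N\Z)^d}\prod_i\widetilde\nu_{b_{k_i}}(\phi_i(\mathbf n))$
are not correlations of sieve weights along a single system of affine-linear forms over $\Z$, and the Goldston--Pintz--Y{\i}ld{\i}r{\i}m/Appendix~D computation you invoke does not apply to them directly: that computation estimates $\sum_{\mathbf n\in K\cap\Z^d}\prod\Lambda_{\chi,\gamma,\mathcal H}(\psi_i(\mathbf n))$ for a convex body $K\subset\R^d$, not a cyclic average. (Green and Tao's original verification in \cite{GT-kAPinPrime} sidesteps this by making $\nu$ equal to $1$ off a short interval, but your $\nu_{\mathbf b}$ is not of that form.) The paper's proof devotes its main effort precisely to bridging this gap: it partitions $[N]^d$ into $Q^d$ congruent boxes $B_{\mathbf u}$, observes that on ``nice'' boxes (those on which $\lceil\phi_i(\mathbf n)/N\rceil$ is constant for every $i$) the reduction $\mathbf n\mapsto\widetilde{\phi_i(\mathbf n)}$ is genuinely affine-linear so Proposition~\ref{prop: correlation} applies, bounds the contribution of each non-nice box by $O(1)$ via the triangle inequality (expanding $\nu_{c_i}(\widetilde{\phi_i(\mathbf n)})\le\nu_{c_i}(\phi_i(\mathbf n)-k_{i,\mathbf u}N)+\nu_{c_i}(\phi_i(\mathbf n)-(k_{i,\mathbf u}+1)N)$ into $2^t$ affine-linear correlations), and finally shows that non-nice boxes form a $O(1/Q)$ proportion of all boxes by a counting argument with the homogeneous parts $\dot\psi_i\pmod Q$. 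Without this (or some substitute such as GT's interval-truncation device), the step ``the standard GPY contour computation evaluates this to $\prod_p\beta_p^{(\mathbf k)}(1+o(1))$'' is not justified, and the proof is incomplete.
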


The fact that the bound does not necessarily hold on the full interval $[N]$ is not a serious restriction, as we may impose $\theta$ to be supported in $[N^c,N]$ without  changing the left-hand side
of \eqref{eq:Wtricked} and \eqref{eq:linEqTwinPrimes2} by more than $O(N^{cd}\log^{O(1)}N)=o(N^d)$.
Zhou \cite{kAP-Chen} and Pintz \cite{kAP-Maynard2} already constructed pseudorandom majorants for Chen and bounded gap primes respectively.
We provide here a similar construction.
Let $R=N^\gamma$ for some small $\gamma >0$ to be chosen appropriately.
Relying on Green and Tao's ``smoothed'' approach \cite{GT-linear-equations}, 
we define \begin{equation}
\label{lambdachir}
\Lambda_{\chi,\gamma}(n):=\log R\Big(\sum_{\ell\mid n}\mu (\ell)\chi\left(\frac{\log \ell}{\log R}\right)\Big)^2,
\end{equation}
where
$\chi:\R \rightarrow [0,1]$ is a smooth, even function  supported on $[-2,2]$ satisfying
$\chi(0)=1=\int_0^2\abs{\chi'}^2$.
Finally let
$\Lambda_{\chi,\gamma,\mathcal{H}}(n):=\prod_{h\in \mathcal{H}}\Lambda_{\chi,\gamma}(n+h)$.
Note that this function is periodic (of period $\prod_{\ell\leq R^2}\ell$ for instance), so we extend it on 
$\Z$ as a periodic function.
Once $W$-tricked, this will be a pseudorandom measure. Ultimately, this is a consequence of the following proposition.

\begin{proposition}[Correlations of sieve weights] \label{prop: correlation}
Let $d,t\geq 1$ be integers.
Let $D,\eta>0$.
Let $\Psi=(\psi_1,\ldots,\psi_t)$ be a finite complexity system of affine-linear forms in $d$ variables.
Suppose that $\gamma>0$ is sufficiently small in terms of $d,t$.
Suppose that the linear coefficients, as well as the integers $h_1,\ldots,h_r$, are bounded in magnitude by $D$. Assume that $w$ is sufficiently large in terms of $d,t,D$.
Let $K\subset [-N,N]^d$ satisfy $\Vol(K)\geq \eta N^d$.
Suppose $b_1,\ldots,b_t$ are in $B_{\mathcal{H}}$ (with $B_{\mathcal{H}}$ as in \eqref{equ20}).
Then
$$
\sum_{\mathbf{n}\in K\cap\Z^d}\prod_{i\in [t]}\Lambda_{\chi,\gamma,\mathcal{H}}(W\psi_i(\mathbf{n})+b_i)=\Vol(K)
\left(\frac{W}{\varphi(W)}\right)^{rt}(1+o_{w\rightarrow \infty}(1)+O(e^{\sqrt{w}}/\log^{1/20} N))
$$
where the error terms above may depend on $d,t,D,\eta$ only.
\end{proposition}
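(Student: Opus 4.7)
My plan is to adapt the Green--Tao evaluation of correlations of truncated sieve weights from \cite{GT-linear-equations} to the $W$-tricked system $\Phi := (W\psi_i + b_i + h_j)_{(i,j) \in [t] \times [r]}$. Since $\Psi$ has finite complexity and the $h_j$ are distinct, $\Phi$ also has finite complexity; moreover the choice $b_i \in B_{\mathcal{H}}$ guarantees that $\Phi_{i,j}(\mathbf{n})$ is coprime to $W$ for every $\mathbf{n}\in\mathbb{Z}^d$.

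The first step is to expand: using \eqref{lambdachir}, I would rewrite each factor $\Lambda_{\chi,\gamma,\mathcal{H}}(W\psi_i(\mathbf{n})+b_i)$ as a product over $h\in\mathcal{H}$ of squared truncated M\"obius sums, thereby obtaining a $2rt$-fold sum over moduli $(\ell_{i,j},\ell'_{i,j})$ (each at most $R^2 = N^{2\gamma}$) weighted by M\"obius and $\chi$-factors. Swapping the order of summation reduces the inner sum to a count of lattice points in $K\cap\mathbb{Z}^d$ satisfying simultaneous divisibilities $[\ell_{i,j},\ell'_{i,j}] \mid \Phi_{i,j}(\mathbf{n})$. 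Only moduli coprime to $W$ contribute nontrivially (by the $B_{\mathcal{H}}$-coprimality), and for each such choice a standard convex-body lattice-point estimate (valid for $\gamma$ small enough in terms of $d,t,r$, so that $R^{O_{d,t,r}(1)} \ll N$) evaluates the count as $\Vol(K)/q(\mathbf{L}) + O(N^{d-1}R^{O(1)})$, with $q(\mathbf{L})$ the index of the sublattice cut out by the divisibility constraints.

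Next, since $1/q(\mathbf{L})$ is multiplicative by the Chinese remainder theorem, the resulting sum over $\mathbf{L}$ factors into an Euler product over primes. At primes $p\leq w$, the $W$-trick forces $\Phi_{i,j}\not\equiv 0 \PMod{p}$, so the local sum collapses to $(p/\varphi(p))^{rt}$, and the product over $p\leq w$ contributes $(W/\varphi(W))^{rt}$. At primes $p>w$, Lemma~\ref{locfac} gives a local factor $1 + O(p^{-2})$, yielding $1+o_{w\to\infty}(1)$ upon multiplying. The remaining analytic sum, involving the truncation $\chi(\log\ell/\log R)$, is then handled by Mellin inversion as in \cite{GT-linear-equations}: one rewrites the sum as a multiple contour integral with a pole of order $rt$ at $s=0$, shifts contours past $\Re s=0$, and extracts the main term from the residue; the normalisation $\int_0^2 |\chi'|^2=1$ arranges the leading coefficient to be exactly $1$ after the $(\log R)^{rt}$ prefactor cancels.

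The hardest part will be the quantitative tracking of error terms, which must combine to yield the stated $o_{w\to\infty}(1)+O(e^{\sqrt{w}}/\log^{1/20}N)$ bound. The Mertens estimate $(W/\varphi(W))^{rt}\ll (\log w)^{rt}$ accounts for the $W$-blowup (crudely absorbed into the $e^{\sqrt{w}}$), while the $\log^{-1/20}N$ factor arises from the contour shift, which yields only a power-of-log saving because $\log R\asymp \gamma\log N$. Ensuring that the accumulated lattice-point error $O(N^{d-1}R^{O(1)})$, once summed against the modulus weights (bounded by $(\log R)^{O(1)}$) and combined with the $W$-blowup, remains subordinate to the stated error is the most delicate point; it forces $\gamma$ to be small in terms of $d,t$ and $w$ to be large in terms of $d,t,D$, matching the hypotheses.
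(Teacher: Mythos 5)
Your high-level strategy is the same as the paper's (expand the product over $\mathcal{H}$ to view the left-hand side as a correlation of $\Lambda_{\chi,\gamma}$ along the $rt$-element system $\mathcal{L}=(W\psi_i+b_i+h_j)_{i,j}$, then evaluate), but whereas you propose to re-derive the Green--Tao correlation estimate from scratch (squared M\"obius expansion, lattice point count, Euler factorization, Mellin inversion), the paper simply invokes \cite[Theorem D.3]{GT-linear-equations}, which already packages exactly that machinery with the error term $O(N^d e^X/(\log R)^{1/20})$ for $X=\sum_{p\in\mathcal{P}}p^{-1/2}$. The real work in the paper is two small but essential observations that your proposal does not quite capture. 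First, Theorem D.3 is stated for systems with bounded coefficients, but $\mathcal{L}$ has coefficients of size $W$, which is unbounded; the paper explicitly argues (via \cite[Prop.~2.13]{bienvenuThesis} and the use already made of the bound at \cite[(D.24)]{GT-linear-equations}) that the implied constant depends only on the behaviour of $\mathcal{L}$ modulo each prime, not on coefficient size, so the application is legitimate.

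Second, and more substantively, your account of where the $e^{\sqrt{w}}$ comes from is wrong. You say the Mertens estimate $(W/\varphi(W))^{rt}\ll(\log w)^{rt}$ ``accounts for the $W$-blowup, crudely absorbed into $e^{\sqrt{w}}$''; but $(W/\varphi(W))^{rt}$ is the main-term normalisation, not part of the error, and it needs no absorbing. The $e^{\sqrt{w}}$ arises from $e^X$ with $X=\sum_{p\in\mathcal{P}}p^{-1/2}$, where $\mathcal{P}$ is the set of \emph{exceptional primes} of $\mathcal{L}$: every $p\le w$ is exceptional (since $W\equiv 0\PMod p$ makes all forms of $\mathcal{L}$ constant mod $p$), and for $w$ large in terms of $d,t,D$ these are the \emph{only} exceptional primes, whence $X\asymp\sqrt{w}/\log w$ and $e^X\ll e^{\sqrt{w}}$. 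This calculation is precisely why the hypothesis $w\gg_{d,t,D}1$ appears — not, as you suggest, to control lattice-point errors. A $(\log w)^{rt}$-type factor as you propose is far smaller than $e^{\sqrt{w}}$ and would not reproduce the stated error. You should also be aware that once you cite Theorem D.3, the remaining local-factor computation $\prod_p\beta_p(\mathcal{L})=(W/\varphi(W))^{rt}(1+o_{w\to\infty}(1))$ follows from Lemma \ref{locfac} as in the paper.
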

\begin{proof}
The left-hand side equals
\begin{align}\label{equ22}
\sum_{\mathbf{n}\in K\cap\Z^d}\prod_{i\in [t],j\in[r]}\Lambda_{\chi,\gamma}(W\psi_i(\mathbf{n})+b_i+h_j).
\end{align}
We then apply \cite[Theorem D.3]{GT-linear-equations} to the system 
$\mathcal{L}=(W\psi_i+b_i+h_j)_{i\in [t],j\in[r]}$, whereby we assume that 
$\gamma$ is small enough in terms of $d,t$.

 Recalling that $\int_{0}^2 |\chi'|^2=1$, \cite[Theorem D.3]{GT-linear-equations} gives for \eqref{equ22} an estimate
\begin{align}\label{equ23}
\Vol(K)  \prod_p \beta_p(\mathcal{L}) +O(N^de^{X}/(\log R)^{1/20}), 
\end{align}
where $X=\sum_{p\in \mathcal{P}}p^{-1/2}$, and $\mathcal{P}$ is the set of \textit{exceptional primes} of $\mathcal{L}$, i.e. those primes $p$ such that modulo $p$ some two of the forms of $\mathcal{L}$ are proportional.
In view of the hypotheses of \cite[Theorem D.3]{GT-linear-equations} (bounded homogeneous coefficients),
one may fear that the implied constant in the big oh term depends 
on the size of the homogeneous coefficients of $\mathcal{L}$, so
ultimately on $w$, but in fact it does not at all as it quickly appears in the proof, since only the behaviour of $\mathcal{L}$ modulo each prime $p$ plays a role in the proof.
This is made clear in \cite[Proposition 2.13]{bienvenuThesis}; in fact already in
\cite[equation (D.24)]{GT-linear-equations} the bound \eqref{equ23} was applied
to a system $\mathcal{L}$ with unbounded coefficients.

Note that the primes in $\mathcal{P}$ are either $\leq w$, or $O_{d,t}(D)$. 
Assuming $w$ is large enough in terms of $d,t,D$, we can assume that they are all $\leq w$,
and therefore $X\ll \sqrt{w}/\log w$, so the error term in \eqref{equ23} becomes $O_{d,t,D,\eta}(\Vol(K)e^{\sqrt{w}}/\log^{1/20}N))$.
Moreover, we have $\beta_p=\beta_p(\mathcal{L})=(p/(p-1))^{rt}$ for $p\leq w$ and $\beta_p=1+O_{d,t,D}(p^{-2})$ as $p$ tends to infinity thanks to Lemma \ref{locfac}, whence $\prod_p\beta_p=(W/\varphi(W))^{rt}(1+o_{w\rightarrow \infty}(1))$. This concludes the proof.
\end{proof}
Now we are ready to prove Proposition \ref{prop: pseudo}.

\begin{proof}[Proof of Proposition \ref{prop: pseudo}]
In this proof, for any $n\in \Z_N$ or any $n\in\Z$,
we will denote by $\tilde{n}$ the unique element of $[N]$ such that $\tilde{n}\equiv n \PMod{N}$.

For $b\in \N$ we define $\nu_b :\Z/N\Z\rightarrow\R_{\geq 0}$
to be the function $n\mapsto\left(\frac{\varphi(W)}{W}\right)^{r}\Lambda_{\chi,\gamma,\mathcal{H}}(W\tilde{n}+b)$ where 
$\gamma\in (0,\rho/2)$. This definition naturally gives rise to a function denoted again by $\nu_b$
on $\Z$.
Further we set $\nu_{\mathbf{b}}(n):=\frac{1}{s}\sum_{i=1}^s\nu_{b_i}(n)$.
Note that
 whenever $N^{2\gamma/\rho}\leq n\leq N$ satisfies $\theta(n)>0$, we have
$\theta(n)\leq\log^r(n+2)\ll \log^rR=\Lambda_{\chi,\gamma,\cH}(n)$.
Hence, whenever $\mathbf{b}=(b_1,\ldots,b_s)$ is in $B_{\mathcal{H}}^s$, we have 
$\theta_{W,b_i}(n)\ll_\gamma \Lambda_{\chi,\gamma,\mathcal{H}}(Wn+b_i)\ll \nu_{\mathbf{b}}(n)$ for each $i\in [s]$ and $n\in [N^{\gamma/\rho},N]$.

Let us verify that $\nu_\mathbf{b}$ is a $(M,\varepsilon)$-pseudorandom measure.
Hence, let $d\leq M$ and $t\leq M$ and $\Psi : \Z^d\rightarrow\Z^t$ be a finite complexity
system of affine-linear forms whose linear coefficients are bounded by $M$.
We work in the regime where $w$ and $N$ tend to infinity while $w\leq \log\log N$.
It suffices to verify that in this regime
\begin{equation}
\label{eq:lincondproof}
\E_{\mathbf{n}\in (\Z/N\Z)^d}\prod_{i\in [t]} \nu_{c_i}(\psi_i(\mathbf{n}))=1+o_M(1)
\end{equation}
for any fixed $\mathbf{c}\in \{b_1,\ldots,b_s\}^t$.
We cannot apply Proposition \ref{prop: correlation} to prove equation \eqref{eq:lincondproof} at this 
stage, since this equation effectively concerns a linear system over $\Z/N\Z$ and not
over $\Z$. In other words, there are wrap-around issues.
To be able to  apply Proposition \ref{prop: correlation}, we first
rewrite the left-hand side of \eqref{eq:lincondproof} as
\begin{equation}
\E_{\mathbf{n}\in (\Z/N\Z)^d}\prod_{i\in [t]} \nu_{c_i}(\psi_i(\mathbf{n}))=\E_{\mathbf{n}\in [N]^d}\prod_{i\in [t]} \nu_{c_i}(\widetilde{\psi_i(\mathbf{n})}).
\end{equation}
Observe that the map $\mathbf{n}\mapsto \widetilde{\psi_i(\mathbf{n})}$ is not an affine-linear map,
so that we still cannot invoke Proposition \ref{prop: correlation}.
However, it is piecewise affine-linear.
To exploit this property, we decompose $[N]^d$ in boxes of the form
$$
B_\mathbf{u}=\left\{\mathbf{x}\in [N]^d\,:\,x_j\in \left(\left\lfloor \frac{(u_j-1)N}{Q}\right\rfloor,\left\lfloor \frac{u_jN}{Q}\right\rfloor\right], j\in [d]\right\}
$$
where $\mathbf{u}$ ranges over $[Q]^d$, and $Q$ is some function of $N$, to be determined later, that tends slowly to infinity
with $N$.
Assuming that $N/Q$ tends to infinity, we have
\begin{equation}
\label{eq:decBoxes}
\E_{\mathbf{n}\in [N]^d}\prod_{i\in [t]} \nu_{c_i}(\widetilde{\psi_i(\mathbf{n})})=
\E_{\mathbf{u}\in [Q]^d}\E_{\mathbf{n}\in B_\mathbf{u}}\prod_{i\in [t]} \nu_{c_i}(\widetilde{\psi_i(\mathbf{n})})
+o(1),
\end{equation}
where the $o(1)$ accounts for the fact that all boxes are not exactly of the same size; they are all of size $(N/Q+O(1))^d=(N/Q)^d(1+o(1))$ though.
Call $\mathbf{u}$ and the corresponding box $B_\mathbf{u}$ \textit{nice} if for every $i\in [t]$ the number $\lceil \psi_i(\mathbf{n})/N\rceil$
is constant as $\mathbf{n}$ ranges in $B_\mathbf{u}$; that is, there exists $k=k_{i,\mathbf{u}}\in\Z$
such that $\psi_i(\mathbf{n})\in (kN,(k+1)N]$ for every $\mathbf{n}\in B_\mathbf{u}$.
When $\mathbf{u}$ is nice, $\widetilde{\psi_i(\mathbf{n})}=\psi_i(\mathbf{n})-k_{i,\mathbf{u}}N\in [N]$ for every $i\in [t]$ and
$\mathbf{n}\in B_\mathbf{u}$. Therefore, 
$$\E_{\mathbf{n}\in B_\mathbf{u}}\prod_{i\in [t]} \nu_{c_i}(\widetilde{\psi_i(\mathbf{n})})=\E_{\mathbf{n}\in B_\mathbf{u}}\prod_{i\in [t]} \nu_{c_i}(\psi_{i,\mathbf{u}}(\mathbf{n}))$$
where the affine-linear map $\Psi_\mathbf{u}:\Z^d\rightarrow\Z^t$ is defined by setting $\psi_{i,\mathbf{u}} : \mathbf{n} \mapsto
\psi_i(\mathbf{n})-k_{i,\mathbf{u}}N$.
It is clear that this map, having the same homogeneous part as $\Psi$, is still of finite complexity
and has bounded homogeneous coefficients.

Thus, we may now apply Proposition \ref{prop: correlation}
to conclude that 
$$\E_{\mathbf{n}\in B_\mathbf{u}}\prod_{i\in [t]} \nu_{c_i}(\psi_{i,\mathbf{u}}(\mathbf{n}))=1+o_{w\rightarrow\infty;M}(1)+o_{N\rightarrow\infty;M}(1).$$
It remains to handle the other boxes.
Suppose that $\mathbf{u}$ is not nice. Thus, there exists $i\in [t]$
and two vectors $\mathbf{x},\mathbf{y}$ in $B_\mathbf{u}$
 such that 
$k:=\lceil \psi_i(\mathbf{x})/N\rceil< \lceil \psi_i(\mathbf{y})/N\rceil$. However,
we have $\abs{\psi_i(\mathbf{x})- \psi_i(\mathbf{y})}\leq 2dM(N/Q+1)<N$, if $Q>3dM$.
Therefore, 
$$
\psi_i(\mathbf{x})/N\leq k<\psi_i(\mathbf{y})/N\leq \psi_i(\mathbf{x})/N+O_M(1/Q)
$$
and
$$
\psi_i(\mathbf{x})/N\geq \psi_i(\mathbf{y})/N-O_M(1/Q)\geq k-
O_M(1/Q).
$$
The last two displayed lines show that both $\psi_i(\mathbf{x})$ and $ \psi_i(\mathbf{y})$ are $kN+O(N/Q)$;
thus $\psi_i(\mathbf{n})=O_{M}(N/Q)\PMod{N}$ for all $\mathbf{n}\in B_\mathbf{u}$.
Further, there exists an integer $k=k_{i,\mathbf{u}}$ such that for all $\mathbf{n}\in B_\mathbf{u}$
and $i\in [t]$
either $\psi_i(\mathbf{n})-kN\in [N]$ or $\psi_i(\mathbf{n})-(k+1)N\in [N]$; consequently,
\begin{align*}
\nu_{c_i}(\widetilde{\psi_i(\mathbf{n})})&=\nu_{c_i}(\psi_i(\mathbf{n})-k_{i,\mathbf{u}}N)1_{\psi_i(\mathbf{n})-k_{i,\mathbf{u}}N\in [N]}+\nu_{c_i}(\psi_i(\mathbf{n})-(k_{i,\mathbf{u}}+1)N)1_{\psi_i(\mathbf{n})-(k_{i,\mathbf{u}}+1)N\in [N]}\\
&\leq \nu_{c_i}(\psi_i(\mathbf{n})-k_{i,\mathbf{u}}N)+\nu_{c_i}(\psi_i(\mathbf{n})-(k_{i,\mathbf{u}}+1)N).
\end{align*}
Whence the bound 
\begin{multline}
\label{eq:2indicatrices}
\E_{\mathbf{n}\in B_\mathbf{u}}\prod_{i\in [t]} \nu_{c_i}(\widetilde{\psi_i(\mathbf{n})})
\leq \E_{\mathbf{n}\in B_\mathbf{u}}\prod_{i\in [t]}(\nu_{c_i}(\psi_i(\mathbf{n})-k_{i,\mathbf{u}}N)+\nu_{c_i}(\psi_i(\mathbf{n})-(k_{i,\mathbf{u}}+1)N)).
\end{multline}
Expanding the product makes the right-hand side of inequality \eqref{eq:2indicatrices} the sum of $2^t$ averages, each of which equals $1+o(1)$ by Proposition \ref{prop: correlation}.
So the left-hand side of inequality \eqref{eq:2indicatrices} is $O(1)$.
It remains to prove that non-nice boxes are rare.
Suppose that $\mathbf{u}$ is not nice. As pointed out above, if $Q$ is large enough, 
there exists $i\in [t]$ such that
$\psi_i(\mathbf{n})=O_{M}(N/Q)\PMod{N}$ for all $\mathbf{n}\in B_\mathbf{u}$.
On the other hand,
\begin{align*}
\psi_i(\mathbf{n})&=\psi_i(\lfloor N\mathbf{u}/Q\rfloor)+O_{M}(N/Q)\\
&=\psi_i(N\mathbf{u}/Q+O(1))+O_{M}(N/Q)\\
&=N\dot{\psi_i}(\mathbf{u})/Q+\psi_i(0)+O_{M}(N/Q).
\end{align*}
Dividing by $N/Q$ yields 
\begin{equation}
\label{eq:fewsols}
\dot{\psi_i}(\mathbf{u})+Q\psi_i(0)/N=O(1)\PMod{Q}.
\end{equation}
Now $\dot{\psi_i}\neq 0$ and when $Q$ is large enough $\dot{\psi_i}\neq 0\PMod{Q}$ as well, so the number of solutions $\mathbf{u}\in [Q]^d$ to the estimate 
\eqref{eq:fewsols} is $O(Q^{d-1})$.
Multiplying by $t$, the proportion of bad boxes among all boxes is therefore $O(Q^{-1})=o(1)$, the implied constant
depending on $M$ only.
This concludes the proof of the estimate \eqref{eq:decBoxes}.
\end{proof}

\section{Almost twin primes in generalized Bohr sets}\label{sec:twin-prime-bohr}
Now we want to prove hypothesis (ii) in Theorem~\ref{th:transference} for the $W$-tricked functions $\theta_1$ and $\theta_2$.
Thus we need to find almost twin primes in $s$-measurable sets.
We start with bounded gap primes. The next proposition establishes hypothesis (ii) in Theorem~\ref{th:transference} for a function of the form
$\theta_{W,b}$ from Section \ref{sec: w}, upon letting $b_i=b+h_i$.

\begin{proposition}[Bounded gap primes with nilsequences]\label{prop:twin-prime-nil}

Fix positive integers $m, d, \Delta$, and some $\varepsilon>0$, $K \geq 2$. Also let $w\geq 1$ be sufficiently large in terms of $m,d,\Delta,\ee, K$ and let $W=\prod_{p\leq w}p$. There exist $\rho = \rho(m) > 0$,  and a positive integer $k = k(m)$, such that the following statement holds for sufficiently large $x \geq x_0(m,d,\Delta,\varepsilon,K,w)$.

Let $\xi \in \Xi_d(\Delta,K)$ be a nilsequence taking values in $[0,1]$.
Let $b_1,\cdots,b_k$ be distinct integers satisfying $(b_i,W)=1$ and $|b_i|\leq \log x$ for each $i\in[k]$. Then
\begin{align*}
\sum_{\substack{n \leq x \\ |\{Wn+b_1, \ldots, Wn+b_k\} \cap \Prime| \geq m \\ p \mid \prod_{i=1}^k (Wn+b_i) \implies p > x^{\rho}}} \xi(n) \gg_m  \left(\prod_p \beta_p\right) \frac{1}{(\log x)^k} \left( \sum_{n \leq x} \xi(n) - \varepsilon x \right), 
\end{align*}
where  $\beta_p = \beta_p(\CL)$ is the local factor defined as in Definition~\ref{def:locFac} for the system of affine-linear forms $\CL = \{L_1,\ldots,L_k\}$ with $L_i(n) = Wn+b_i$.
\end{proposition}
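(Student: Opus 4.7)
The plan is to adapt Maynard's sieve argument from \cite{MaynardII} for producing $m$ primes in an admissible $k$-tuple $\{Wn+b_1,\ldots,Wn+b_k\}$ to the nilsequence-weighted setting, with the Bombieri--Vinogradov theorem for nilsequences from \cite{shao2020bombierivinogradov} playing the role of the classical Bombieri--Vinogradov theorem used by Maynard. Following Maynard, I would introduce the sieve weights
\[
w_n := \Big(\sum_{\substack{d_i\mid Wn+b_i\,\forall i\\ d_1\cdots d_k<R}}\lambda_{d_1,\ldots,d_k}\Big)^{2},
\]
with $R=x^{\vartheta/2}$ and $\vartheta<1/2$ a small constant, where $\lambda_{\mathbf d}$ is derived by inclusion--exclusion from a smooth function $F$ supported on the simplex $\{t_1+\cdots+t_k\leq 1\}$, is supported on squarefree tuples coprime to $W$, and incorporates the $x^{\rho}$-roughness condition on $\prod_i(Wn+b_i)$ as in \cite{MaynardII}. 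The target quantity is
\[
S(\xi) := \sum_{i=1}^{k}\sum_{n\leq x}\xi(n)\,w_n\,\mathbf{1}_{\Prime}(Wn+b_i) - (m-1)\sum_{n\leq x}\xi(n)\,w_n,
\]
restricted to $n$ in the rough range; a lower bound on $S(\xi)$ of the asserted shape yields, via Maynard's standard pigeonhole extraction combined with the fact that $w_n=(\log x)^{O(k)}$ on the support, the advertised lower bound on the $\xi$-weighted count of $n$ with $\geq m$ primes, with the factor $(\log x)^{-k}$ coming from the typical size of $w_n$.

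Expanding the square in $w_n$ and applying the Chinese remainder theorem reduces both halves of $S(\xi)$ to linear combinations of sums of the form $\sum_{n\leq x,\,n\equiv a\PMod{q}}\xi(n)\,g(n)$ with $q<R^{2}<x^{1-\delta}$ and $g\in\{1,\Lambda\}$. For $g=1$, the quantitative equidistribution theorem of \cite{GT-nilsequence} applied to $\xi\in\Xi_{d}(\Delta,K)$ yields
\[
\sum_{\substack{n\leq x\\ n\equiv a\PMod q}}\xi(n)=\frac{1}{q}\sum_{n\leq x}\xi(n)+\mathrm{error},
\]
with errors controllable on $q$-average. For $g=\Lambda$ restricted to a single form $Wn+b_i$, the Bombieri--Vinogradov theorem for nilsequences \cite{shao2020bombierivinogradov} supplies
\[
\sum_{q<x^{1/2-\delta}}\max_{(a,q)=1}\Big|\sum_{\substack{n\leq x\\ n\equiv a\PMod q}}\xi(n)\Lambda(n)-\frac{1}{\varphi(q)}\sum_{\substack{n\leq x\\ (n,q)=1}}\xi(n)\Lambda(n)\Big|\ll_{A}\frac{x}{(\log x)^{A}}.
\]
Combined, these estimates produce main terms for both halves of $S(\xi)$ of Maynard's standard shape, but with $\sum_{n\leq x}\xi(n)$ in place of $x$, and with total errors absorbable into the $\varepsilon x/(\log x)^{k}$ budget once $w$ is large enough in terms of $\varepsilon$.

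Finally, choosing $k=k(m)$ sufficiently large and the cut-off $F$ via Maynard's optimization in \cite{MaynardII} makes the ratio of the main terms exceed $m-1$ with a definite margin, yielding $S(\xi)\gg \prod_{p}\beta_{p}(\CL)\cdot (\log x)^{-k}\cdot (\sum_{n\leq x}\xi(n)-\varepsilon x)$; the singular series emerges from the $W$-tricked expansion via Lemma~\ref{locfac}, since $(W/\varphi(W))^{k}\prod_{p>w}\beta_{p}(\CL)$ differs from $\prod_{p}\beta_{p}(\CL)$ by a factor $1+o_{w\to\infty}(1)$. The principal obstacle is ensuring that the many errors from nilsequence equidistribution across the sieve moduli $q<R^{2}$ sum to an acceptable total after being weighted by the Maynard coefficients $\lambda_{\mathbf d}\lambda_{\mathbf e}$: this is exactly where the on-average control provided by the nilsequence Bombieri--Vinogradov theorem is essential, while the $W$-trick streamlines the local analysis so that the singular series $\prod_{p}\beta_{p}$ factors cleanly out of the main term.
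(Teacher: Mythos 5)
Your overall outline—run a weighted Maynard sieve with $\omega_n=\xi(n)$ and feed in a nilsequence Bombieri--Vinogradov theorem—does match one layer of the paper's argument, but you are missing the key reduction that makes the sieve analysis legitimate, and the estimates you invoke are false in the generality you need them.

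The central gap is that a general nilsequence $\xi\in\Xi_d(\Delta,K)$ need \emph{not} equidistribute in arithmetic progressions, so neither of your two claimed input estimates actually holds. For $g=1$ you assert $\sum_{n\leq x,\,n\equiv a\PMod q}\xi(n)=\frac{1}{q}\sum_{n\leq x}\xi(n)+\text{error}$ with the errors controllable on $q$-average; this is already false for a periodic nilsequence such as $\xi(n)=1_{n\equiv 0\PMod{q_0}}$, where the left side is either $0$ or $\approx x/q$ depending on $a$, while the putative main term is $\approx x/(qq_0)$. For $g=\Lambda$ you invoke a Bombieri--Vinogradov bound of size $O_A(x/(\log x)^A)$ uniformly over $q<x^{1/2-\delta}$ for the sum $\sum_{n\equiv a\PMod q}\xi(n)\Lambda(n)$, but the theorems in \cite{shao2020bombierivinogradov} (and their versions Theorems~\ref{equidist-1/3}, \ref{equidist-1/2} in this paper) are stated only for \emph{equidistributed} nilsequences in $\Xi_s^0(\Delta,\eta^{-\kappa};\eta,x)$, and the saving there is $\eta^\kappa(\log x)^{O(1)}$, not an arbitrary power of log. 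There simply is no discrepancy result of the type you describe for an arbitrary $\xi\in\Xi_d(\Delta,K)$.

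What the paper does instead, and what your proposal omits, is to first apply the Green--Tao factorization theorem \cite[Theorem 1.19]{GT-nilsequence} to write $g=\epsilon g'\gamma$ with $\epsilon$ smooth, $g'$ equidistributed in a rational subgroup, and $\gamma$ periodic of some period $q\leq M$. One then partitions $[x]$ into short progressions $P$ of step $q$, on which the smooth and periodic factors are essentially constant, reducing matters to a genuinely equidistributed nilsequence on each $P$; this is Proposition~\ref{prop:twin-prime-eq-nil}. Only in that equidistributed setting is Maynard's weighted sieve (Proposition~\ref{prop: maynard}) applied with $\omega_n=\xi(n)$, and the four hypotheses there are verified using Theorems~\ref{equidist-1/3} and \ref{typeI-BV}, whose hypotheses really do require equidistribution. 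The residual local information coming from the periodic factor $\gamma$ is then handled by a separate Gowers-norm argument in Lemma~\ref{lem:twim-prime-periodic}, which is also absent from your sketch. Without the factorization step, the sieve analysis cannot even begin because the main terms you expect to extract are not the true main terms for a non-equidistributed $\xi$.
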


We remark that if $p \leq w$ then $\beta_p = (p/\varphi(p))^k$,  and if $p > w$ then, writing $a_p$ for the number of distinct residue classes among $b_1,\ldots,b_k\PMod{p}$,
\begin{equation}\label{eq:betap} 
\beta_p = \left(\frac{p}{\varphi(p)}\right)^k \left(1 - \frac{a_p}{p}\right) = \left(\frac{p}{\varphi(p)}\right)^{k-a_p}\left(1 + O_k(p^{-2})\right).
\end{equation}
In particular, if $w>\abs{b_i-b_j}$ for all $i,j$, we infer that $\prod_p\beta_p\gg (W/\phi(W))^k$.

We now turn to the corresponding statement for Chen primes.

\begin{proposition}[Chen primes with nilsequences]\label{prop:chen-prime-nil}
Fix positive integers $d, \Delta$ and some  $\varepsilon>0$, $K \geq 2$. Also let $w\geq 1$ be sufficiently large in terms of $d,\Delta,\ee, K$ and $W=\prod_{p\leq w}p$. The following statement holds for sufficiently large $x \geq x_0(d,\Delta,\varepsilon,K,w)$.

Let $\xi \in \Xi_d(\Delta,K)$ be a nilsequence taking values in $[0,1]$.
Then for some absolute constant $\delta_0>0$ and any $1\leq b\leq W$ with $(b,W)=(b+2,W)=1$ we have
\[ \sum_{\substack{n\leq x \\ Wn+b\in  \Prime\\ Wn+b+2\in P_2\\p \mid Wn+b+2\implies p\geq x^{1/10}}} \xi(n) \geq \left(\frac{W}{\varphi(W)}\right)^2 \frac{\delta_0}{(\log x)^2} \left( \sum_{n\leq x} \xi(n) - \varepsilon x \right). \]
\end{proposition}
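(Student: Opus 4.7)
The plan is to adapt Chen's original proof of his theorem by inserting the nilsequence weight $\xi(n)$ throughout, and to substitute the classical Bombieri--Vinogradov theorem with its nilsequence-twisted analogue established by two of the present authors in \cite{shao2020bombierivinogradov}. As a preliminary step, one reduces to the case where the underlying polynomial orbit of $\xi$ is totally equidistributed on its host nilmanifold: otherwise, an application of the Green--Tao factorisation theorem for polynomial orbits decomposes $\xi$ into a product of a slowly varying piece, a periodic piece (which can be absorbed into the congruence modulo $W$), and an equidistributed piece on a subnilmanifold of smaller dimension. The $-\varepsilon x$ slack in the conclusion allows us to discard cases where the equidistributed piece has total mass less than $\varepsilon x$, and an induction on the dimension handles the remainder.

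In the equidistributed regime, one considers the weighted sequence $\mathcal{A}_\xi$ consisting of the integers $Wn+b+2$ for $n \leq x$, taken with multiplicity $\xi(n) 1_{Wn+b \in \Prime}$, and seeks a lower bound for the total weight of those elements which are $P_2$ with all prime factors at least $x^{1/10}$. Chen's argument produces a lower bound of the form $S(\mathcal{A}_\xi, x^{1/10}) - \tfrac{1}{2}\Omega_\xi$, where $S$ is a Rosser--Iwaniec linear sifted sum and $\Omega_\xi$ is Chen's switching correction handling integers $Wn+b+2 = p_1 p_2 p_3$ with $x^{1/10} \leq p_1 \leq x^{1/3} \leq p_2$. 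The Jurkat--Richert linear sieve reduces the estimation of both quantities to controlling, on average over $d \leq x^{1/2-\epsilon}$, the error terms in the asymptotic formula
\[ \sum_{\substack{n\leq x,\, Wn+b\in\Prime \\ d \mid Wn+b+2}} \xi(n) = \frac{g(d)}{d} X_\xi + r_\xi(d), \]
where $g(d)/d$ is the appropriate local density factor and $X_\xi$ is proportional to $\sum_{n\leq x}\xi(n)/\log x$ with the expected constants from the $W$-trick. The required average bound $\sum_{d \leq x^{1/2-\epsilon}} \max_{a}|r_\xi(d, a)| \ll_A x/(\log x)^A$ is precisely the content of the Bombieri--Vinogradov theorem for primes twisted by nilsequences from \cite{shao2020bombierivinogradov}.

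The main obstacle is that Chen's switching correction $\Omega_\xi$, when expanded, involves a sum over three prime variables (one of which is $Wn+b$ itself), and after combinatorial re-parameterisation must be treated as a bilinear-type sum with nilsequence weight whose effective modulus can exceed $x^{1/2}$. Classically this is rescued by swapping the roles of summand and modulus so as to return to the Bombieri--Vinogradov range; our weighted version requires the analogous swap to remain legitimate in the presence of $\xi$, which it does because \cite{shao2020bombierivinogradov} is formulated at the level of type-I and type-II decompositions with nilsequence coefficients. Once the sieve bounds are combined, the singular series collapses: the hypotheses $(b,W) = (b+2, W) = 1$ give $\beta_p = (p/\varphi(p))^2$ for $p \leq w$, while Lemma~\ref{locfac} yields $\prod_{p>w} \beta_p = 1 + o_{w \to \infty}(1)$, recovering exactly the factor $(W/\varphi(W))^2$ in the stated lower bound.
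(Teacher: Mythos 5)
Your overall strategy matches the paper: reduce to totally equidistributed nilsequences via the Green--Tao factorisation theorem, then run a weighted Chen-type sieve against the equidistributed weight, feeding the arithmetic error terms into the nilsequence Bombieri--Vinogradov machinery. That is exactly what the paper does (reduction in Section~\ref{sec:twin-prime-bohr} to Proposition~\ref{prop:chen-prime-eq-nil}, and then Section~\ref{sec:eq} via the Matom\"aki--Shao Chen sieve, Proposition~\ref{prop: chen}). However, there is a genuine gap in the key quantitative input, and two smaller imprecisions in the reduction.

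The gap: you assert that the required input is the bound
$\sum_{d \leq x^{1/2-\varepsilon}} \max_{a}\lvert r_\xi(d, a)\rvert \ll_A x/(\log x)^A$
and that this ``is precisely the content'' of the nilsequence Bombieri--Vinogradov theorem of \cite{shao2020bombierivinogradov}. That is not true, and the discrepancy is the central technical obstacle. For nilsequence twists, the max-over-residues Bombieri--Vinogradov estimate (Theorem~\ref{equidist-1/3} in this paper, corresponding to \cite[Theorem 4.3]{shao2020bombierivinogradov}) only reaches level $x^{1/3-\varepsilon}$, not $x^{1/2-\varepsilon}$. To push the level to $x^{1/2-\varepsilon}$ one must restrict to a single fixed residue class and sum against \emph{well-factorable} weights $\lambda_d$ (Theorem~\ref{equidist-1/2}, i.e.\ \cite[Theorem 4.4]{shao2020bombierivinogradov}). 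This is why the paper invokes the Rosser--Iwaniec linear sieve in well-factorable form: Proposition~\ref{prop: chen} is designed precisely so that its hypotheses (i)--(ii) involve only well-factorable sieve weights at level $x^{1/2-\varepsilon}$ (and the extra $1_{p\in[P,P')}*\lambda'$ variant needed for Chen's switching), together with a single upper bound in (iii). If you try to run Jurkat--Richert at level $1/2-\varepsilon$ with max-over-residues control you will not be able to close the argument with what is in \cite{shao2020bombierivinogradov}; you must work through the well-factorable decomposition, and then through Lemma~\ref{typeII-wellfactorable} and Lemma~\ref{typeII-partially-wellfactorable} for the type-II pieces.

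Two smaller points on the reduction step. First, the periodic part $\gamma$ coming out of the factorisation theorem has its own period $q \leq M$, which is in general unrelated to $W$; it cannot simply be ``absorbed into the congruence modulo $W$.'' The paper splits $[x]$ into progressions of step $q$, applies the equidistributed proposition on each, and then reassembles using the periodic-case Lemma~\ref{lem:twim-prime-periodic}, taking care to separate the $\beta_p$ factors for $p \mid q$ from those for $p \mid W$. Second, there is no induction on dimension: the factorisation theorem is applied once with a large exponent $B$, producing a piece $g'$ that is already $M^{-B}$-equidistributed on its subnilmanifold, to which Proposition~\ref{prop:chen-prime-eq-nil} is applied directly.
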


We will prove Propositions~\ref{prop:twin-prime-nil} and~\ref{prop:chen-prime-nil} by reducing to the case when the underlying polynomial sequence is equidistributed, Propositions~\ref{prop:twin-prime-eq-nil} and~\ref{prop:chen-prime-eq-nil}, using a factorization theorem for nilsequences.

\begin{proposition}[Bounded gap primes weighted by equidistributed nilsequences]\label{prop:twin-prime-eq-nil}
Fix positive integers $m, d, \Delta$, and some $\varepsilon>0$, $A \geq 2$. There exist $\rho = \rho(m) > 0$,  a positive integer $k = k(m)$, and $C = C(m,d,\Delta) > 0$, such that the following statement holds for sufficiently large $x \geq x_0(m,d,\Delta,\varepsilon, A)$.

Let $K \geq 2$ and $\eta \in (0,1/2)$ be parameters satisfying the conditions
\[ \eta \leq K^{-C} (\log x)^{-CA}, \ \ K \leq (\log x)^C. \]
Let $\xi \in \Xi_d(\Delta, K; \eta, x)$ be a nilsequence taking values in $[0,1]$. Let $\CL = \{L_1,\ldots, L_k\}$ be an admissible $k$-tuple of linear functions with $L_i(n)=a_in+b_i$ and $1\leq a_i \leq (\log x)^A$, $|b_i|\leq x$. Then
\begin{align}\label{equ7} \sum_{\substack{n\leq x \\ |\{L_1(n), \ldots, L_k(n)\} \cap \Prime| \geq m \\ p \mid \prod_{i=1}^k L_i(n) \implies p > x^{\rho}}} \xi(n) \gg_m  \frac{\FS(\mathcal{L})}{(\log x)^k} \left( \sum_{n\leq x} \xi(n) - \varepsilon x \right), \end{align}
where the singular series is given by 
$\mathfrak{S}(\mathcal{L}):=\prod_{p}\beta_p(\mathcal{L})$, i.e.
\begin{align}\label{eq27}
\mathfrak{S}(\mathcal{L})=\prod_{p}\Big(1-\frac{1}{p}\Big)^{-k}\Big(1-\frac{|\{n\in \mathbb{Z}/p\mathbb{Z}:  L_1(n)\cdots L_k(n)\equiv 0\PMod p\}|}{p}\Big)>0.   
\end{align}
\end{proposition}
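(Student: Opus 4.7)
Plan. The natural approach is the Maynard--Tao multidimensional sieve, now carrying the nilsequence weight $\xi(n)$ through each step. Fix a smooth symmetric function $F:\R_{\geq 0}^k \to \R$ supported on the simplex $\{\mathbf{y}:\sum y_i \leq 1\}$, set $R=x^{\theta}$ for a fixed small $\theta>0$, and form the GPY--Maynard weights
\[
w_n := \Big( \sum_{d_i \mid L_i(n)} \mu(d_1)\cdots\mu(d_k)\, F\!\Big(\tfrac{\log d_1}{\log R},\ldots,\tfrac{\log d_k}{\log R}\Big) \Big)^{2} \geq 0.
\]
The aim is to show that for suitable $F$ and $k=k(m)$,
\[
S := \sum_{\substack{n\leq x\\ p\mid \prod_i L_i(n)\,\Rightarrow\, p>x^\rho}} \xi(n)\Big(\sum_{i=1}^k \mathbf{1}_{\Prime}(L_i(n)) - (m-1)\Big) w_n \;\gg_m\; \FS(\CL)(\log x)^{-k}\Big(\sum_{n\leq x}\xi(n) - \varepsilon x\Big).
\]
Since at least one summand is then positive only when at least $m$ of the $L_i(n)$ are prime, this delivers \eqref{equ7}. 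The $x^\rho$ smoothness condition will be enforced by the fundamental lemma of the sieve at negligible cost provided $\rho=\rho(m)$ is chosen small enough, so the core task is to evaluate $S$.

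Two families of averages arise. The weight sum $S_1 := \sum_n \xi(n) w_n$, upon expanding the square into pairs $(\mathbf{d},\mathbf{e})$, becomes a linear combination of nilsequence sums along arithmetic progressions
\[ \sum_{\substack{n\leq x\\ n\equiv a \,(\mathrm{mod}\,q)}} \xi(n) \qquad \text{with } q=\prod_i [d_i,e_i]\leq R^2. \]
Because $\xi\in \Xi_d(\Delta,K;\eta,x)$ with $\eta\leq K^{-C}(\log x)^{-CA}$, the polynomial sequence $g(qn+a)$ inherits quantitative equidistribution from that of $g$ whenever $\eta$ beats $q^{-O_d(1)}$, by the quantitative Leibman theorem of \cite{GT-nilsequence}. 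This yields
\[ \sum_{\substack{n\leq x\\ n\equiv a\,(q)}} \xi(n) = \frac{1}{q}\sum_{n\leq x}\xi(n) + O(x\eta^{c}) \]
for some $c=c(d,\Delta)>0$, and substituting back produces the standard Maynard main term scaled by $\mathbb{E}_{n\leq x}\xi(n)$. The prime-catching sums $S_2^{(i)}:=\sum_n \xi(n)\mathbf{1}_{\Prime}(L_i(n)) w_n$ reduce analogously, after opening $w_n$, to averages of the shape $\sum_{n\leq x,\,n\equiv a\,(q)} \xi(n)\mathbf{1}_{\Prime}(L_i(n))$ over $q\leq R^2$, for which the Bombieri--Vinogradov estimate for primes twisted by nilsequences proved in \cite{shao2020bombierivinogradov} supplies the required level of distribution on average, again producing the standard Maynard contribution weighted by $\mathbb{E}_{n\leq x}\xi(n)$.

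With these two asymptotics in hand, the ratio $\sum_i S_2^{(i)}/S_1$ is precisely the Maynard ratio of \cite{MaynardII}, which for $k$ large enough in terms of $m$ and a near-optimal $F$ exceeds $m-1$ by a fixed positive amount. Hence $S \gg_m \FS(\CL)(\log x)^{-k}\sum_{n\leq x} \xi(n)$, and the $-\varepsilon x$ slack on the right-hand side absorbs the accumulated equidistribution error $O(x\eta^{c})$ provided $\eta$ is chosen polynomially small in $(\log x)^A$ and $K^{-1}$ as in the hypothesis. The only non-routine ingredient is the nilsequence Bombieri--Vinogradov bound driving $S_2^{(i)}$; since this is the content of \cite{shao2020bombierivinogradov}, the remaining hurdle is bookkeeping --- propagating the $K$- and $A$-dependence through the exponent $c$, verifying uniformity of the equidistribution of $g(qn+a)$ across all $q \leq R^2$, and confirming that the fundamental-lemma restriction to $p>x^\rho$ distorts the Maynard main term by at most a constant factor depending on $m$.
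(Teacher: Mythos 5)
Your overall strategy --- run the Maynard--Tao sieve directly with the weight $\omega_n=\xi(n)$, reduce the weight sum $S_1$ to equidistribution of $\xi$ in arithmetic progressions and the prime-catching sums $S_2^{(i)}$ to the nilsequence Bombieri--Vinogradov theorem --- is in the same spirit as the paper. The paper does not re-derive the sieve but instead invokes a weighted Maynard sieve black box (Proposition \ref{prop: maynard}, from \cite[Theorem 6.2]{matomaki-shao}) and verifies its four hypotheses; that is only a presentational difference. Your identification of \cite{shao2020bombierivinogradov} (in the form of Theorem \ref{equidist-1/3} here) as the input driving $S_2^{(i)}$ is correct.

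However, your treatment of the weight sum $S_1$ has a real gap. You propose to estimate each inner sum $\sum_{n\leq x,\, n\equiv a \PMod q}\xi(n)$ pointwise via quantitative equidistribution, with error $O(x\eta^{c})$, and to absorb the accumulated error into the $-\varepsilon x$ slack. This fails on two fronts. First, total $\eta$-equidistribution only controls $\xi$ along sub-progressions of length $\geq \eta\cdot 10x$, i.e.\ for moduli $q\ll \eta^{-1}$; since the hypothesis only guarantees $\eta\geq$ (some negative power of $K(\log x)^{A}$), this covers at most polylogarithmic moduli, whereas the sieve requires control up to $q\leq R^2=x^{2\theta}$. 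Second, even granting a pointwise $O(x\eta^{c})$ error for every $q$, after opening $w_n^2$ the error is summed over $\gg R^{2k}$ pairs $(\mathbf{d},\mathbf{e})$ and is $\gg R^{2k}x\eta^{c}\gg x^{1+2k\theta}\eta^{c}$, dwarfing both the main term (which is $\asymp\FS(\CL)\,x/(\log x)^k$) and the claimed slack $\varepsilon x$. What you actually need is an averaged, Bombieri--Vinogradov-type estimate for the nilsequence itself in arithmetic progressions,
\[
\sum_{q\leq x^{\theta'}}\max_{a\PMod q}\Big|\sum_{\substack{n\leq x\\ n\equiv a\PMod q}}\xi(n)-\frac{1}{q}\sum_{n\leq x}\xi(n)\Big|\ll \frac{x}{(\log x)^{B}},
\]
which in the paper is exactly Hypothesis (ii) of Proposition \ref{prop: maynard} and is established via the Type I estimate of Lemma \ref{typeI-BV} (with $M=1$, $L(n)=n$). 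Without this averaged input, the main-term evaluation of $S_1$ does not go through. Once you replace your pointwise Leibman step with a Type I nilsequence BV estimate of this kind, the rest of your plan --- the sieve ratio, the $x^{\rho}$-smoothness via the fundamental lemma, and the use of \cite{shao2020bombierivinogradov} for $S_2^{(i)}$ --- matches the actual proof.
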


\begin{proposition}[Chen primes weighted by equidistributed nilsequences]\label{prop:chen-prime-eq-nil}
Fix positive integers $d, \Delta$ and some  $\varepsilon>0$, $A \geq 2$. There exists $C = C(d,\Delta) > 0$, such that the following statement holds for sufficiently large $x \geq x_0(d,\Delta,\varepsilon,A)$.

Let $K \geq 2$ and $\eta \in (0,1/2)$ be parameters satisfying the conditions
\[ \eta \leq K^{-C} (\log x)^{-CA}, \ \ K \leq (\log x)^C. \]
Let $\xi \in \Xi_d(\Delta, K; \eta, x)$ be a nilsequence taking values in $[0,1]$.  Let $\CL = \{L_1,L_2\}$ be an admissible set of two linear functions with $L_1(n) = an+b$ and $L_2(n) = an+b+2$, where $1 \leq a\leq \log x$, $|b|\leq x$. Then for some absolute constant $\delta_0>0$ we have
\[ \sum_{\substack{n\leq x \\ L_1(n)\in  \Prime\\ L_2(n)\in P_2\\p \mid L_2(n)\implies p\geq x^{1/10}}} \xi(n) \geq  \delta_0 \frac{\FS(\CL)}{(\log x)^2} \left( \sum_{n\leq x} \xi(n) - \varepsilon x \right), \]
where the singular series is given by \eqref{eq27}.
\end{proposition}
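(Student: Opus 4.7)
The plan is to run Chen's original weighted sieve argument with the nilsequence $\xi(n)$ inserted as a multiplicative weight throughout, deferring every nilsequence-specific computation to the Bombieri--Vinogradov theorem for nilsequence-twisted primes of the last two authors \cite{shao2020bombierivinogradov}.

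First, I would recall the shape of Chen's proof. Using the lower/upper linear sieves of Jurkat--Richert at level $D=x^{1/2-\varepsilon'}$ (with $\varepsilon'>0$ small), together with Chen's switching trick, one constructs a nonnegative Chen--Richert weight $\Omega(n)$ supported on $n$ for which $L_1(n)$ is prime and $L_2(n)$ has no prime factor below $x^{1/10}$, and such that $\Omega(n)>0$ forces $L_2(n)\in P_2$. In the unweighted case the sieve machinery gives $\sum_{n\le x}\Omega(n)\ge \delta_0\,\FS(\CL)\,x/(\log x)^2$ for an absolute constant $\delta_0>0$. My goal becomes the analogous lower bound for $\sum_{n\le x}\xi(n)\Omega(n)$.

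Expanding $\Omega$ reduces the problem to evaluating, for squarefree $d\le D$ coprime to $a$ and for $e\in\{1\}\cup(\Prime\cap[x^{1/10},x^{1/3}))$, sums of the shape
\[
T(d,e):=\sum_{\substack{n\le x\\ de\,\mid\, L_2(n)}}\xi(n)\,\mathbf{1}_{L_1(n)\in\Prime}.
\]
Here I would invoke \cite{shao2020bombierivinogradov}: under exactly the equidistribution hypotheses $\eta\le K^{-C}(\log x)^{-CA}$ and $K\le(\log x)^C$ appearing in the proposition, that theorem produces a power-of-log saving on the average over $d\le D$ of the discrepancy $\bigl|T(d,e)-\tfrac{1}{\varphi(ade)}\sum_{n\le x}\xi(n)\bigr|$, in the average sense required by the linear sieve. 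Feeding this back into $\sum_{n\le x}\xi(n)\Omega(n)$, every unweighted arithmetic count in Chen's argument is replaced by $\bigl(\tfrac{1}{x}\sum_{n\le x}\xi(n)\bigr)$ times itself, with additive error $O_A(x/(\log x)^A)$. The local factors $\beta_p(\CL)$ reassemble into $\FS(\CL)$ and Chen's absolute constant $\delta_0$ emerges unchanged from the combinatorial identities of the sieve; taking $A$ large in terms of $\varepsilon$ absorbs the accumulated error into the $-\varepsilon x$ in the conclusion.

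The main obstacle is the Chen switching step, which requires the Bombieri--Vinogradov estimate for $\xi$-twisted primes to accommodate a variable modulus $de$ with $e$ itself a prime up to $x^{1/3}$ and to be uniform enough in $a,b$; this is a bilinear/Type~II style input that must be extracted from \cite{shao2020bombierivinogradov}. I expect this to be either stated there or immediately derivable by the same methods, after which the remaining sieve-theoretic and combinatorial manipulations are completely identical to Chen's original ones and never interact with the nilmanifold structure again.
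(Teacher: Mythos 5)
Your top-level strategy is the same as the paper's: insert $\xi(n)$ as a weight into Chen's sieve argument and reduce everything to Bombieri--Vinogradov-type inputs for nilsequence-twisted primes from \cite{shao2020bombierivinogradov}. The paper, however, does not re-run Chen's combinatorics; it invokes the pre-packaged weighted Chen sieve of Matom\"aki--Shao (Proposition~\ref{prop: chen} here, which is \cite[Theorem~6.4]{matomaki-shao}), so that the task collapses to verifying three concrete hypotheses, and this is where your description of the required inputs is not quite right.

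Your sums $T(d,e)=\sum_{n\le x,\, de\mid L_2(n)}\xi(n)\,\mathbf{1}_{L_1(n)\in\Prime}$ only capture hypothesis~(i) of Proposition~\ref{prop: chen}, where the sieve modulus sits on $L_2(n)$ and one counts $L_1(n)$ prime (this is handled by Theorem~\ref{equidist-1/2}, i.e., the well-factorable BV for nilsequences, including the $1_{p\in[P,P')}*\lambda'$-type weights that you implicitly need for the variable prime factor $e$). But after Chen's switching trick the roles of $L_1$ and $L_2$ reverse: one needs a well-factorable BV estimate where the modulus $r$ divides $L_1(n)$ and the summand carries $\mathbf{1}_{L_2(n)\in B_j}$ with $B_j$ a set of products of \emph{exactly three} primes in prescribed ranges, i.e., hypothesis~(ii). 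That is not of the shape $T(d,e)$; it is a Type~II sum in a different variable, and the paper handles it by writing $1_{n\in B_j}$ as a sum of $\ll(\log x)^{10}$ Type~II convolutions and invoking Lemma~\ref{typeII-wellfactorable}. In addition, hypothesis~(iii) --- the upper bound $\sum_{n\le x,\,L_2(n)\in B_j}\xi(n)\le(1+\varepsilon)\,|B_j\cap[1,L_2(x)]|\,\varphi(a)^{-1}x^{-1}\sum_{n\le x}\xi(n)$ --- is genuinely nontrivial here because $\xi$ is not constant, and it again uses the Type~II structure of $1_{B_j}$ together with the equidistribution of $\xi'=\xi-\mu$; your remark that the remaining manipulations ``never interact with the nilmanifold structure again'' is too optimistic on this point. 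All three inputs are indeed available in \cite{shao2020bombierivinogradov} (as you anticipated), so the argument does close, but it does not reduce to bounding the $T(d,e)$ you wrote down.
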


The purpose of this section is to deduce Propositions~\ref{prop:twin-prime-nil} and~\ref{prop:chen-prime-nil} from the equidistributed case, Propositions~\ref{prop:twin-prime-eq-nil} and~\ref{prop:chen-prime-eq-nil}. We will collect some sieve lemmas in Section~\ref{sec:sieve} and some analytic inputs of Bombieri--Vinogradov type in Section~\ref{sec:BV} before proving Propositions~\ref{prop:twin-prime-eq-nil} and~\ref{prop:chen-prime-eq-nil} in Section~\ref{sec:eq}.

\subsection{Dealing with the periodic case}

In the deduction process, we need to deal with a local (modulo $q$) version of Propositions~\ref{prop:twin-prime-nil}
and \ref{prop:chen-prime-nil}, where the requirement that all of the $Wn+b_i$'s are almost primes is replaced by the local conditions that $(Wn+b_i,q)=1$.

\begin{lemma}\label{lem:twim-prime-periodic}
Fix positive integers $k, d, \Delta$, and some $\varepsilon>0$, $K \geq 2$. Also let $w\geq 1$ be sufficiently large in terms of $k,d,\Delta,\ee, K$ and $W=\prod_{p\leq w}p$. Then the following statement holds for sufficiently large $x \geq x_0(k,d,\Delta,\varepsilon,K,w)$.

Let $\xi \in \Xi_d(\Delta,K)$.
Let $\{b_1,\ldots,b_k\}$ satisfy $(b_i,W)=1$ for every $i\in [k]$. Let $q \leq x^{0.9}$ be a positive integer with $(q,W)=1$. Then
\begin{align*}
\Big|\beta^{-1} \left(\frac{q}{\varphi(q)}\right)^k \sum_{\substack{n \leq x \\ (\prod_{i=1}^k(Wn+b_i),q)=1}} \xi(n) - \sum_{n \leq x} \xi(n)\Big| \leq  \varepsilon x,
\end{align*}
where $\beta = \prod_{p \mid q}\beta_p$, and $\beta_p$ is defined as in Proposition~\ref{prop:twin-prime-nil}.
\end{lemma}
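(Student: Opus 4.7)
My plan is to first reformulate the inequality as an equidistribution statement for $\xi$ over arithmetic progressions modulo $q$, and then to verify this via the Green--Tao factorization theorem for polynomial nilsequences. Let $R(q):=\{r\bmod q:\,(\prod_{i=1}^k(Wr+b_i),q)=1\}$. Since $(q,W)=1$, the Chinese Remainder Theorem gives $|R(q)|=\prod_{p\mid q}(p-a_p)$, where $a_p=|\{b_1,\dots,b_k\}\bmod p|$. Combined with the formula $\beta_p=(p/(p-1))^k(1-a_p/p)$ from \eqref{eq:betap}, this yields $\beta^{-1}(q/\varphi(q))^k=q/|R(q)|$, so the claim is equivalent to
$$
\Bigl|\frac{q}{|R(q)|}\sum_{r\in R(q)}\sum_{\substack{n\le x\\ n\equiv r\PMod{q}}}\xi(n)\;-\;\sum_{n\le x}\xi(n)\Bigr|\le\varepsilon x.
$$

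Next, I apply the Green--Tao factorization theorem for polynomial nilsequences to $g$ with a parameter $M=M(\varepsilon,d,\Delta,K)$ to be chosen, writing $g(n)=\epsilon(n)g'(n)\gamma(n)$, where $\gamma$ is rational of period $Q\le M^{O(1)}$, $g'$ is totally $M^{-1}$-equidistributed in a subnilmanifold $G'/\Gamma'$, and $\epsilon$ is slowly varying. Since $w$ is permitted to depend on all the other parameters, I take $w>Q$, which forces $(q,Q)=1$. I then decompose $\sum_{n\le x,\,n\equiv r\PMod{q}}\xi(n)$ by the residue class of $n$ modulo $Q$ (on each such class $\gamma(n)$ is a constant which I absorb into the Lipschitz function, yielding functions $F_c$ of comparable Lipschitz norm), and further partition the range of $n$ into short intervals on which $\epsilon$ is essentially constant.

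Each resulting piece is a sum of $F_c(g'(n)\Gamma')$ over $n$ in an arithmetic progression of modulus $qQ\le x^{0.9}M^{O(1)}$, intersected with a short interval. Substituting $n=qQm+s$ and invoking the change-of-variables machinery for polynomial sequences, the shifted sequence $g'(qQm+s)$ remains quantitatively equidistributed on $[1,x/(qQ)]$, so each AP-sum is its expected main term plus an error of $O(M^{-c_1})$ times the length, for some $c_1>0$ depending on the step and dimension of $G'/\Gamma'$. Summing over residue classes modulo $Q$ and over $r\in R(q)$ produces the main term $(|R(q)|/q)\sum_{n\le x}\xi(n)$ with total error $O(M^{-c_1}x)\le\varepsilon x$ once $M$ is chosen large enough in terms of $\varepsilon$.

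The main obstacle is preserving useful equidistribution of $g'$ on arithmetic progressions of modulus as large as $qQ\sim x^{0.9}M^{O(1)}$. The hypothesis $q\le x^{0.9}$ leaves AP-lengths $\ge x^{0.1}/M^{O(1)}$, still a positive power of $x$, so the quantitative change-of-variables lemma does furnish the saving $M^{-c_1}$ after balancing the polynomial loss incurred by the substitution; this in turn dictates the required dependence of $w$ on the other parameters and of $x_0$ on $w$.
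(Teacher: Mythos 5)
There is a genuine gap in the step where you substitute $n = qQm + s$ and assert that $g'(qQm+s)$ ``remains quantitatively equidistributed on $[1,x/(qQ)]$.'' Total $\delta$-equidistribution of $(g'(n))_{n \leq x}$ (in the sense of \cite[Definition 1.2]{GT-nilsequence}) controls averages only over progressions $P \subset [x]$ of length $\geq \delta x$; the progression $\{qQm + s : m \leq x/(qQ)\}$ has length $\approx x^{0.1}/M^{O(1)}$, which for $q$ near $x^{0.9}$ is a vanishingly small fraction of $x$ compared with $\delta x = M^{-B}x$ when $M$ is polylogarithmic (or even a small power of $x$). Nor does the ``change-of-variables machinery'' rescue you: writing $\chi \circ g'(n) = \sum_i \alpha_i n^i$, after substitution the top coefficient becomes $\alpha_d (qQ)^d$, and $\|\alpha_d(qQ)^d\|$ can be essentially zero while $\|\alpha_d\| x^d$ is large --- so the original sequence can be well-equidistributed while the dilated one is not. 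Your appeal to a ``balancing of the polynomial loss'' is precisely the point that fails: when the dilation factor is a large power of $x$, the loss is not polynomial in $M$. (One could try taking $M$ to be a power of $x$ to compensate, but then $w > Q$ forces $w$ to grow with $x$, contradicting the requirement that $w$ depend only on $k,d,\Delta,\varepsilon,K$.)

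The paper's proof routes around this entirely by working with Gowers-norm duality rather than pointwise equidistribution in progressions. It sets $f(n) = \beta^{-1}(q/\varphi(q))^k 1_X(n) - 1$, where $X = \{n : (Wn+b_i,q)=1 \ \forall i\}$, and proves $\|f\|_{U^{d+1}[x]} = o(1)$ by expanding the Gowers inner product, using the period-$q$ structure of $1_X$ to reduce to a residue count modulo $q$ weighted by lattice-point counts in convex bodies; the only place the size of $q$ enters is an error $O(q/x) = O(x^{-0.1})$ in the lattice-point count, which is benign. It then invokes \cite[Proposition 11.2]{GT-linear-equations} to write $\xi = \xi_1 + \xi_2$ with $\|\xi_1\|_{U^{d+1}[x]^*}$ bounded and $\|\xi_2\|_\infty$ small, and concludes by the Gowers--dual pairing. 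This argument is uniform in $q$ up to $x^{0.9}$ precisely because it never needs a nilsequence to equidistribute along a single sparse progression, only the average over all progressions encoded in the Gowers norm. Your factorization-based approach does not achieve this uniformity, and I do not see how to repair it without essentially reverting to the Gowers-norm strategy.
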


\begin{proof}
Let $X$ be the set of $n \leq x$ such that $(Wn+b_i,q)=1$ for each $1 \leq i \leq k$. Consider the function
\[ f(n) = \beta^{-1} \left(\frac{q}{\varphi(q)}\right)^k 1_X(n) - 1. \]
Let us prove first that
\[ \|f\|_{U^{d+1}[x]} = o_{x\rightarrow\infty;k,d}(1) + o_{w\rightarrow\infty; k,d}(1). \]
Expanding out $\|f\|_{U^{d+1}[x]}$ and letting $g : \Z^{d+2}\rightarrow \Z^{\{0,1\}^{d+1}}$ be the Gowers norm system $(x,\mathbf{h})\mapsto (x+\boldsymbol\omega
\cdot \mathbf{h})_{\boldsymbol\omega
\in\{0,1\}^{d+1}}$, we are left with the task of proving that
\begin{equation}
\label{eq:expansion}
\sum_{\mathbf{n}\in \D\cap\Z^{d+2}}\prod_{\omega\in \{0,1\}^{d+1}}f(g_{\boldsymbol\omega}(\mathbf{n}))=o_{x\rightarrow\infty;k,d}(x^{d+2}) + o_{w\rightarrow\infty; k,d}(x^{d+2}),
\end{equation}
where $\D=\{\mathbf{y}\in\R^{d+2} : g_{\boldsymbol\omega}(\mathbf{y})\in [1,x],\  \forall\boldsymbol\omega
\in\{0,1\}^{d+1} \}$.
Expanding further, the left-hand side of \eqref{eq:expansion} equals
\begin{equation}
\label{eq:expansion2}
\sum_{\Omega\subset \{0,1\}^{d+1}}(-1)^{\abs{\Omega}}\sum_{\mathbf{n}\in \D\cap\Z^{d+2}}\prod_{\boldsymbol\omega\in \Omega} \beta^{-1} \left(\frac{q}{\varphi(q)}\right)^k 1_X(g_{\boldsymbol\omega}(\mathbf{n})) = \sum_{\Omega\subset \{0,1\}^{d+1}} (-1)^{|\Omega|} S_{\Omega},
\end{equation}
where, after a change of variables, we have
\begin{equation}
\label{eq:decomposed}
S_{\Omega} = \sum_{\mathbf{a}\in [q]^{d+2}}\sum_{\substack{\mathbf{n}\in\Z^{d+2}\\q\mathbf{n}+\mathbf{a}\in \D}}
\prod_{\boldsymbol\omega\in \Omega} \beta^{-1} \left(\frac{q}{\varphi(q)}\right)^k 1_X(g_{\boldsymbol\omega}(q\mathbf{n}+\mathbf{a})).
\end{equation}
Now the summand of the inner sum actually does not depend on $\mathbf{n}$ since $1_X(g_{\boldsymbol\omega}(q\mathbf{n}+\mathbf{a}))=1_X(g_{\boldsymbol\omega}(\mathbf{a}))$.
Let $\D_\mathbf{a}=\{\mathbf{n}\in\R^{d+2}:q\mathbf{n}+\mathbf{a}\in \D\}$, which is a convex
body of volume $q^{-(d+2)}\Vol(\D)$. Since $\D_{\ve{a}} \subset [1, x/q]^{d+2}$ and $\Vol(\D) \gg_d x^{d+2}$, the number of integral points $\ve{n} \in \D_{\ve{a}} \cap \Z^{d+2}$ is
\[ \Vol(\D_{\ve{a}}) + O_d\left( (x/q)^{d+1} \right) = q^{-(d+2)} \Vol(\D) \left(1 +  O_d(q/x) \right). \]
It follows that
\[ S_{\Omega} = \left(1 +  O_d(q/x) \right) \Vol(\D) \cdot \beta^{-|\Omega|}   \E_{\ve{a} \in [q]^{d+2}} \prod_{\boldsymbol\omega\in \Omega} \left(\frac{q}{\varphi(q)}\right)^{k} 1_X(g_{\boldsymbol\omega}(\mathbf{a}))  .\]
By multiplicativity and the definition of the singular series (Definition~\ref{def:locFac}), the average over $\ve{a}$ above can be written as
\[
\prod_{p\mid q}  \left(\E_{\ve{a} \in (\Z/p\Z)^{d+2}} \prod_{\boldsymbol\omega\in \Omega} \prod_{i=1}^k \frac{p}{\varphi(p)} 1_{(Wg_{\ve{\omega}}(\ve{a}) + b_i, p)=1}  \right) = \prod_{p \mid q}  \beta_p(\mathcal{G}_{\Omega}),
\]
where $\mathcal{G}_{\Omega}$  is the system of affine-linear forms $\mathcal{G}_{\Omega}$ consisting of $\ve{a} \mapsto W g_{\ve{\omega}}(\ve{a}) + b_i$ for $\ve{\omega} \in \Omega$ and $1 \leq i \leq k$. If there are $a_p$ distinct residue classes among $b_1,\cdots,b_k\PMod{p}$, then $\mathcal{G}_{\Omega}$ consists of $a_p|\Omega|$ distinct affine-linear forms modulo $p$, no two of which are linearly dependent (over $\F_p$). Hence, Lemma~\ref{locfac} implies that
\[ \beta_p(\mathcal{G}_{\Omega}) = \left(\frac{p}{\varphi(p)}\right)^{(k-a_p)|\Omega|} \left(1 + O_{k,d}(p^{-2})\right). \]
Since $p\mid q\Rightarrow p>w$, we have $\prod_{p \mid q} (1+ O_{k,d}(p^{-2})) = 1 + O_{k,d}(w^{-1})$. Putting things together, we have
\[ S_{\Omega} = \left(1 +  O_d(q/x) + O_{k,d}(w^{-1}) \right) \Vol(\D) \cdot \beta^{-|\Omega|} \prod_{p \mid q} \left(\frac{p}{\varphi(p)}\right)^{(k-a_p)|\Omega|}.  \]
From~\eqref{eq:betap} we deduce that
\[ S_{\Omega} = \left(1 + O_{k,d}(qx^{-1} + w^{-1})\right) \Vol(\D) \]
for each $\Omega \subset \{0,1\}^{d+1}$, and this establishes~\eqref{eq:expansion}.

By~\cite[Proposition 11.2]{GT-linear-equations} (see also footnote \ref{foot1}), the nilsequence $\xi$ can be decomposed as
\[ \xi = \xi_1 + \xi_2, \]
where $\|\xi_1\|_{U^{d+1}[x]^*} = O_{d,\Delta,K,\ee}(1)$ and $\|\xi_2\|_{\infty} \leq \ee/4$. Hence,
\[ \Big|\sum_{n \leq x} f(n) \xi_1(n) \Big| \leq x \|f\|_{U^{d+1}[x]} \cdot \|\xi_1\|_{U^{d+1}[x]^*} \leq \tfrac{\ee}{2} x, \]
provided that $w$ and $x$ are large enough in terms of $k,d,\Delta,K,\ee$, and
\[ \Big|\sum_{n \leq x} f(n) \xi_2(n) \Big| \leq \|\xi_2\|_{\infty} \sum_{n \leq x} |f(n)|  \leq \tfrac{\ee}{2} x. \]
Combining the two inequalities above gives
\[ \Big|\sum_{n \leq x} f(n) \xi(n) \Big| \leq \ee x, \]
as desired.
\end{proof}

\subsection{Reducing to the equidistributed case}
We now complete the proof of Propositions \ref{prop:twin-prime-nil} and \ref{prop:chen-prime-nil} assuming Propositions~\ref{prop:twin-prime-eq-nil} and~\ref{prop:chen-prime-eq-nil}.
Let $f : \N\rightarrow\C$ be any function satisfying $\abs{f(n)}\leq 1$ for any $n\in \N$; we will specialize later to the case where $f$ is
the indicator functions of the sets over which the summations in these propositions run.
Let $\xi : \N\rightarrow [0,1]$ be a nilsequence in $\Xi_d(\Delta,K)$.
We want to estimate $\sum_{n\leq x} f(n)\xi(n)$.
By Definition \ref{def:PsiDeltaK}, there exists
 a nilmanifold $G/\Gamma$ of dimension at most $\Delta$, equipped with a filtration $G_{\bullet}$  of degree $\leq d$ and a
$K$-rational  Mal'cev basis  $\mathcal{X}$, a
  polynomial sequence $g: \Z \to G$ adapted to  $G_{\bullet}$ and a
 Lipschitz function $F: G/\Gamma \to  \mathbb{C}$  satisfying $\|F\|_{\operatorname{Lip}(\mathcal{X})} \leq 1$, such that $\xi(n)=F(g(n)\Gamma)$.

Let 
\begin{align*}
\mu:=\frac{1}{x}\sum_{n\leq x}\xi(n).    
\end{align*}
We may assume that $\mu\geq \varepsilon$, as otherwise there is nothing to prove.

Let $B = B(m,d,\Delta) > 0$ be sufficiently large. To reduce Propositions \ref{prop:twin-prime-nil} and \ref{prop:chen-prime-nil} to the case when $g$ is equidistributed, we apply the factorization theorem \cite[Theorem 1.19]{GT-nilsequence} to obtain some parameter $M \in [\log x, (\log x)^{O_{B,d,\Delta}(1)}]$ and a decomposition $g = \epsilon g' \gamma$ into polynomial sequences $\epsilon, g', \gamma: \Z \to G$ with the following properties:
\begin{enumerate}
\item $\epsilon$ is $(M,x)$-smooth, i.e. $d(\epsilon(n), \textnormal{id}_G)\leq  M$ and $d(\epsilon(n), \epsilon(n-1)) \leq     M/x$ for all $n\in [x]$, with $d=d_{\mathcal{X}}$ the metric used on $G$. 

\item $g'$ takes values in a rational subgroup $G' \subseteq G$, equipped with a Mal'cev basis $\CX'$ in which each element is an $M$-rational combination of the elements of $\CX$, and moreover $\{g'(n)\}_{n \leq x}$ is totally $M^{-B}$ equidistributed in $G'/\Gamma \cap G'$.
\item $\gamma$ is $M$-rational (so that $\gamma(n)\Gamma$ is an $M$-rational point for every $n \in \Z$), and moreover $\{\gamma(n)\Gamma\}_{n \in \Z}$ is periodic with period some $q \leq M$.
\end{enumerate}

In the case of Proposition~\ref{prop:twin-prime-nil}, we may make the following additional assumption on $q$ by enlarging $q$ and $M$ if necessary: If $b_i \equiv b_j\PMod{p}$ for some $i \neq j$ and some prime $p$, then $p$ divides $q$. By~\eqref{eq:betap}, this implies that if $p\nmid qW$, then $\beta_p = 1 + O_k(p^{-2})$.

Let $\mathcal{Q}$ be a collection of arithmetic  progressions of step $q$ and length $\asymp (x/qM) (\log x)^{-100}$ such that $[x]=\bigcup_{P\in \mathcal{Q}}P$.
 For each $P\in \mathcal{Q}$, let $\gamma_P$ be the (constant) value of $\gamma$ on $P$ and consider
\begin{equation}\label{equ19} 
\sum_{n \in P} f(n)\xi(n)=\sum_{n \in P} f(n)F(\epsilon(n) g'(n) \gamma_P \Gamma).
\end{equation}
We shall first dispose of the smooth part $\epsilon(n)$. Pick an arbitrary $n_P \in P$, and let $\epsilon_P = \epsilon(n_P)$. 
If $n\in P$, then $|n-n_P| \ll (x/M)(\log x)^{-100}$. Since the Lipschitz norm of $F$ is bounded by $1$, we have 
\begin{align*}
|F(\epsilon(n)g'(n)\gamma_P\Gamma)-F(\epsilon_P g'(n)\gamma_P\Gamma)|&\leq  d_{\mathcal{X}}(\epsilon(n)g'(n)\gamma_P,\epsilon(n_P)g'(n)\gamma_P)\\
&= d_{\mathcal{X}}(\epsilon(n),\epsilon(n_P))\\
&\leq \frac{M}{x} |n-n_P| \ll (\log x)^{-100}
\end{align*}
where we used the right-invariance of the metric $d$.
Hence~\eqref{equ19} equals
\[  \sum_{n \in P} f(n)F(\epsilon_P g'(n) \gamma_P \Gamma) + O(|P| (\log x)^{-100}). \]
Let $H_P$ be the conjugate $H_P = \gamma_P^{-1} G' \gamma_P$, let $\Gamma_P = H_P \cap \Gamma$, let $F_P: H_P \to [0,1]$ be the $\Gamma_P$-automorphic function defined by $F_P(x) = F(\epsilon_P \gamma_P x)$, and let $g_P: \Z \to H_P$ be the polynomial sequence defined by $g_P(n) =  \gamma_P^{-1}g'(n)\gamma_P$. Thus
\[ F(\epsilon_P g'(n)\gamma_P\Gamma) = F_P( g_P(n)\Gamma_P). \]
Some routine arguments (see the Claim at the end of Section 2 in \cite{GT-mobius-nil}) produce the following properties:
\begin{enumerate}
\item The sub-nilmanifold $H_P/\Gamma_P$ is equipped with a Mal'cev basis $\CX_P$ in which each element is an $M^{O_{d,\Delta}(1)}$-rational combination of the elements of $\CX$.
\item $\{g_P(n)\}_{n \leq x}$ is totally $M^{-cB}$-equidistributed for some constant $c = c(d, \Delta) > 0$. By choosing $B$ large enough we may ensure that $cB \geq C$, the constant from Proposition~\ref{prop:twin-prime-eq-nil} or Proposition~\ref{prop:chen-prime-eq-nil}.
\item $\|F_P\|_{\operatorname{Lip}} \leq M^{O_{d,\Delta}(1)}$.
\end{enumerate}
Let $y = |P|\gg (x/qM) (\log x)^{-100} \geq x^{1/2}$, and write $P = \{qn + t: n\leq y\}$ for some $t \in \Z$. Let $g_{P}': \Z \to H_P$ be the polynomial sequence defined by $g_{P}'(n) = g_{P}(qn+t)$, so that $\{g_P'(n)\}_{n \leq y}$ is still totally $M^{-cB}$-equidistributed (after possibly reducing the constant $c$). 
Then 
\begin{equation}
\label{eq:sumf}
 \sum_{n \in P} f(n)F(\epsilon_P g'(n) \gamma_P \Gamma)=\sum_{n\leq y}f(qn+t)F_P(g_{P}'(n)\Gamma_P).
\end{equation} 

\textbf{Case of Proposition \ref{prop:twin-prime-nil}.} 
Now we specialize to the function $f$ relevant for Proposition \ref{prop:twin-prime-nil}.
Write $L_i(n) = Wn+b_i$, and let $\CL' = \CL'_P= \{L_1', \cdots, L_k'\}$, where $L_i'$ is the linear function defined by $L_i'(n) = L_i(qn+t)$. 
Let $f$ be the indicator function of the set of integers $n$ such that $\#(\{L_1(n), \ldots, L_k(n)\} \cap \Prime) \geq m$ and $p \mid L_1(n)\cdots L_k(n) \implies p \geq x^{\rho}$.
Thus 
\[\sum_{n\leq y}f(qn+t)F_P(g_{P}'(n)\Gamma_P)
= \sum_{\substack{n\leq y \\ \#(\{L_1'(n), \ldots, L_k'(n)\} \cap \Prime) \geq m \\ p \mid L_1'(n)\cdots L_k'(n) \implies p \geq x^{\rho}}} F_P(g_{P}'(n)\Gamma_P).
\]
If $\CL'$ remains admissible, then by Proposition~\ref{prop:twin-prime-eq-nil} the right-hand side above is
\[
\begin{split}
&\gg_k \frac{\FS(\CL')}{(\log x)^k} \left(\sum_{n\leq y} F_P(g_{P}'(n)\Gamma_P) - \varepsilon/4\cdot y\right)
= \frac{\FS(\CL')}{(\log x)^k} \left(\sum_{n\in P} F(\epsilon_P g'(n)\gamma_P\Gamma) - \varepsilon/4\cdot |P|\right) \\
&\geq   \frac{\FS(\CL')}{(\log x)^k} \left(\sum_{n\in P} F(\epsilon(n)g'(n)\gamma_P\Gamma) - \varepsilon/2\cdot |P|\right),
\end{split}
\]
where the last inequality follows once again from the smoothness of $\epsilon$.
By the definition of $\FS(\CL')$, we see that $\CL'$ is admissible if any only if $(Wt+b_i, q)=1$ for each $1 \leq i \leq k$, and in this case we have
\[ \FS(\CL') = \prod_{p \mid qW} \left(1 - \frac{1}{p}\right)^{-k} \prod_{p \nmid qW} \left(1 - \frac{k}{p}\right) \left(1 - \frac{1}{p}\right)^{-k} \asymp \left(\frac{qW}{\varphi(qW)}\right)^k. \]
Putting everything together, 
under the assumption that $\CL'$ is admissible,
we have proven that
\[ \sum_{\substack{n \in P \\ |\{Wn+b_1, \ldots, Wn+b_k\} \cap \Prime| \geq m \\ p \mid \prod_{i=1}^k (Wn+b_i) \implies p > x^{\rho}}} \xi(n)\gg_m \left(\frac{qW}{\varphi(qW)}\right)^k \frac{1}{(\log x)^k}  \sum_{n \in P} \left(\xi(n) - \tfrac{\ee}{2}\right).  \]
Summing this estimate over all $P\in\mathcal{Q}$, we get
\[
\begin{split}
\sum_{\substack{n \leq x \\ |\{Wn+b_1, \ldots, Wn+b_k\} \cap \Prime| \geq m \\ p \mid \prod_{i=1}^k (Wn+b_i) \implies p > x^{\rho}}} \xi(n) &\gg_m \left(\frac{qW}{\varphi(qW)}\right)^k \frac{1}{(\log x)^k}  \sum_{\substack{n \leq x \\ (\prod_{i=1}^k (Wn+b_i),q)=1}} \left(\xi(n) - \tfrac{\ee}{2}\right) \\
&= \left(\frac{W}{\varphi(W)(\log x)}\cdot \frac{\frac{q}{(q,W)}}{\varphi(\frac{q}{(q,W)})}\right)^k \sum_{\substack{n \leq x \\ (\prod_{i=1}^k (Wn+b_i),\frac{q}{(q,W)})=1}} \left(\xi(n) - \tfrac{\ee}{2}\right).
\end{split}
\]
Finally, applying Lemma~\ref{lem:twim-prime-periodic} to the summation on the right-hand side, with $q$ replaced by $q/(q,W)$) and $\xi$ replaced by $\xi - \ee/2$, we get that the right-hand side above is
at least
\[  \left(\frac{W}{\varphi(W) (\log x)}\right)^k \left(\prod_{p  \mid q/(q,W)=1} \beta_p\right) \left(\sum_{n \leq x} \xi(n) - \ee x\right) = \left(\prod_{p \mid qW} \beta_p\right) \frac{1}{(\log x)^k} \left(\sum_{n \leq x} \xi(n) - \ee x\right).
\]
The conclusion of Proposition \ref{prop:twin-prime-nil} follows, since $\beta_p = 1 + O_k(p^{-2})$ for $p\nmid qW$ by our assumption on $q$.

\textbf{Case of Proposition \ref{prop:chen-prime-nil}.} 
Finally, we address Proposition \ref{prop:chen-prime-nil}. Thus we return to equation \eqref{eq:sumf}
and now specialize to the case where $f$ is the indicator function of the set of integers $n$
such that $L_1(n)\in  \Prime$ and  $L_2(n)\in P_2$ and $p \mid L_2(n)\implies p\geq x^{1/10}$
where $L_1(n)=Wn+b$ and $L_2(n)=Wn+b+2$.
Let $\CL'=\CL'_P=\{L'_1,L'_2\}$ where
for $i\in \{1,2\}$, the linear function $L_i'$ is defined by $L_i'(n) = L_i(qn+t)$.
Then we have 
\[\sum_{n\leq y}f(qn+t)F_P(g_{P}'(n)\Gamma_P)= \sum_{\substack{n\leq y \\ L'_1(n)\in  \Prime\\ L'_2(n)\in P_2\\p \mid L'_2(n)\implies p\geq x^{1/10}}} F_P(g_{P}'(n)\Gamma_P).\]
If $\CL'$ remains admissible (which happens precisely when $(Wt+b,q)=(Wt+b+2,q)=1$), then Proposition~\ref{prop:chen-prime-eq-nil} 
and the same argument as above prove that
the right-hand side above is at least
$$
\delta_0\left(\frac{qW}{\varphi(qW)}\right)^2 \frac{1}{(\log x)^2}  \sum_{n \in P} (\xi(n) - \ee/2).
$$
This time the summation over all
$P\in \mathcal{Q}$ yields, after applying Lemma~\ref{lem:twim-prime-periodic},
\[
\sum_{\substack{n\leq x \\ L_1(n)\in  \Prime\\ L_2(n)\in P_2\\p \mid L_2(n)\implies p\geq x^{1/10}}}\xi(n)
\geq \delta_0 \left(\prod_{p \mid q/(q,W)} \beta_p\right) \left(\frac{W}{\varphi(W)}\right)^2 \frac{1}{(\log x)^2} \left(\sum_{n \leq x} \xi(n) - \ee x\right).
\]
The conclusion of Proposition \ref{prop:chen-prime-nil} follows, since $\beta_p = 1 + O_k(p^{-2})$ for $p > w$ by~\eqref{eq:betap}.

\section{Three sieve lemmas}\label{sec:sieve}

In the proof of Theorems \ref{th:linEqTwinPrimes} and \ref{th: Linnik}, we will need weighted versions of Maynard's sieve for bounded gap primes, Chen's sieve for almost twin primes, and Iwaniec's sieve for primes of the form $x^2+y^2+1$.


\begin{proposition}[Maynard's sieve] \label{prop: maynard}  For any $\theta\in (0,1)$, $k \in \mathbb{N}$, there exist constants $C=C(\theta)$, $\rho=\rho(\theta,k)$ such that the following holds.

Let $(\omega_n)_{n\leq x} $ be any nonnegative sequence, and let $\mathcal{L}=(L_1,\ldots, L_k)$ be an admissible $k$-tuple of linear fuctions with $L_i(n)=a_in+b_i$  and $1\leq a_i,b_i\leq  x$. Suppose that $(\omega_n)$ obeys  the  following hypotheses.
\begin{enumerate}[label=\upshape(\roman*)]
    \item (Prime number theorem) For each $1\leq i\leq k$ and some $\delta > 0$, we have 
\begin{align*}
\frac{\varphi(a_i)}{a_i} \sum_{\substack{n\leq x\\L_i(n)\in \mathbb{P}}}\omega_n \geq \frac{\delta}{\log x}\sum_{n\leq x}\omega_n.
\end{align*}

\item   (Well-distribution in arithmetic progressions)   For some $C_0>0$ we have
\begin{align*}
\sum_{r\leq x^{\theta}}\,\,\max_{c\PMod r}\Big|\sum_{\substack{n\leq x\\n\equiv c\PMod r}}\omega_n-\frac{1}{r}\sum_{n\leq x}\omega_n\Big|\leq C_0 \frac{\sum_{n\leq x}\omega_n}{(\log x)^{101k^2}}.    
\end{align*}

\item  (Bombieri--Vinogradov) For each $1\leq i\leq k$  we have
\begin{align*}
\sum_{r\leq x^{\theta}}\max_{(L_i(c),r)=1}\Big|\sum_{\substack{n\leq x\\n\equiv c\PMod r\\L_i(n)\in \mathbb{P}}}\omega_n-\frac{\varphi(a_i)}{\varphi(a_ir)}\sum_{\substack{n\leq x\\L_i(n)\in \mathbb{P}}}\omega_n\Big|\leq C_0 \frac{\sum_{n\leq x}\omega_n}{(\log x)^{101k^2}}.    
\end{align*}

\item (Brun--Titchmarsh) We have 
\begin{align*}
\max_{c\PMod r}\sum_{\substack{n\leq x\\n\equiv c\PMod r}}\omega_n\leq  \frac{C_0}{r}\sum_{n\leq x}\omega_n,    
\end{align*}
uniformly for $r\leq x^{\theta}$.
\end{enumerate}

Then, for $x\geq x_0(\theta, k, C_0)$, we have
\begin{align*}
\sum_{\substack{n\leq x\\ |\{L_1(n),\ldots, L_k(n)\}\cap \mathbb{P}|\geq C^{-1}\delta \log k\\ p\mid \prod_{i=1}^k L_i(n)\implies p>x^{\rho}}}\omega_n\gg_{k,\theta,\delta} \frac{\mathfrak{S}(\mathcal{L})}{(\log x)^k}\sum_{n\leq x}\omega_n,    
\end{align*}
where the singular series $\mathfrak{S}(\mathcal{L})$ is given by \eqref{eq27}.
\end{proposition}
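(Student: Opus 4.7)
The plan is to adapt the multidimensional Maynard--Tao sieve to the weighted setting of the sequence $(\omega_n)$, with hypotheses (i)--(iv) playing the roles of, respectively, the prime number theorem in arithmetic progressions, the Bombieri--Vinogradov theorem, the calibration of the prime density to be $\delta$, and a Brun--Titchmarsh-type upper bound uniform in the modulus.

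Concretely, I would set $R := x^{\theta/4}$ and work with Maynard--Tao weights
$w_n := \bigl(\sum_{d_i \mid L_i(n),\, \prod_i d_i \leq R} \lambda_{d_1,\dots,d_k}\bigr)^2$
built from a smooth symmetric function $F$ on the simplex $\{\sum_i x_i \leq 1\}$, premultiplied by a presieve indicator $\prod_{i=1}^k \mathbf{1}_{(L_i(n),\mathcal{P}(x^\rho))=1}$ with $\mathcal{P}(z)=\prod_{p\leq z}p$, so as to force the $\rho$-roughness condition. Expanding the square and applying (ii) to each modulus $\prod_i [d_i,e_i] \leq R^2 \leq x^{\theta/2}$ gives
\[ \mathcal{S}_1(\omega) := \sum_{n \leq x}\omega_n w_n = (1+o_{k,\theta}(1))\,\mathfrak{S}(\mathcal{L})\,I_k(F)(\log R)^k \sum_{n \leq x}\omega_n, \]
while hypotheses (i) and (iii) together produce, for each $i \in [k]$,
\[ \mathcal{S}_2^{(i)}(\omega) := \sum_{\substack{n\leq x\\L_i(n)\in \mathbb{P}}} \omega_n w_n = (1+o_{k,\theta}(1))\,\delta\,\mathfrak{S}(\mathcal{L})\,J_k^{(i)}(F)\frac{(\log R)^{k+1}}{\log x}\sum_{n\leq x}\omega_n, \]
the presieve incurring only a bounded loss depending on $k,\theta$ when $\rho$ is taken small enough.

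Maynard's combinatorial lemma then yields an $F$ with $\sum_i J_k^{(i)}(F)/I_k(F) \geq c\log k$; summing the $\mathcal{S}_2^{(i)}$ produces $\sum_n \omega_n w_n M_n \geq C^{-1}\delta(\log k)\mathcal{S}_1(\omega)$ where $M_n := |\{i : L_i(n) \in \mathbb{P}\}|$ and $C = C(\theta)$. Setting $m := \lfloor C^{-1}\delta(\log k)/2\rfloor$ and using $M_n \leq k$ pointwise, a pigeonhole argument yields
\[ \sum_{n:\, M_n \geq m,\, n\text{ rough}} \omega_n w_n \,\gg_{k,\theta,\delta}\, \mathcal{S}_1(\omega) \,\asymp_k\, \mathfrak{S}(\mathcal{L})(\log x)^k \sum_n \omega_n. \]
To pass to an unweighted lower bound, I would use that for an $x^\rho$-rough $n$ each $L_i(n)$ has at most $2^{1/\rho}$ divisors, so the sum defining $w_n$ has only $O_{k,\rho}(1)$ nonzero terms, giving the pointwise bound $w_n \ll_{k,\rho} (\log x)^{2k}$ on the rough set; dividing through yields the claimed estimate.

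The main obstacle is executing the presieving step rigorously: one must verify that the factor $\prod_i \mathbf{1}_{(L_i(n),\mathcal{P}(x^\rho))=1}$ can be inserted without spoiling the asymptotics of $\mathcal{S}_1$ and $\mathcal{S}_2^{(i)}$ nor their ratio, and that the error terms produced by (ii) and (iii), of relative size $(\log x)^{-101k^2}$, are absorbed uniformly in all sieve parameters. This is routine in the unweighted analogue of \cite{MaynardII}, and the weighted adaptation is essentially mechanical because hypotheses (i)--(iv) supply precisely the arithmetic inputs that the Maynard--Tao machinery requires.
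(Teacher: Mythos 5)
The paper's own ``proof'' of this proposition is simply a citation to Matom\"aki--Shao (their Theorem 6.2 with $\alpha=1$), which adds weights to Maynard's sieve from \cite{MaynardII}. Your sketch instead reconstructs the underlying argument, which is a legitimate thing to attempt, and the broad outline (compute $\mathcal{S}_1$ and the $\mathcal{S}_2^{(i)}$, invoke Maynard's optimization to get $\sum_i J_k^{(i)}/I_k \gg \log k$, pigeonhole on the number of prime values, then divide by a pointwise bound on $w_n$ over rough $n$) is the right shape; the final step, bounding $w_n \ll_{k,\rho}(\log x)^{2k}$ on the $x^\rho$-rough set so as to pass from the weighted to the unweighted inequality, is correct.

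The genuine gap is the handling of the roughness condition. You propose to premultiply the Maynard--Tao weight by $\prod_{i=1}^{k}\mathbf{1}_{(L_i(n),\mathcal{P}(x^\rho))=1}$ and then ``expand the square and apply (ii) to each modulus $\prod_i[d_i,e_i]\leq x^{\theta/2}$.'' But the roughness indicator is not compatible with this expansion: unraveling $\mathbf{1}_{(L_i(n),\mathcal{P}(x^\rho))=1}$ by M\"obius inversion introduces extra moduli of size up to $\prod_{p\leq x^\rho}p$, which is superpolynomial in $x$ and far exceeds the level $x^\theta$ available from hypotheses (ii)--(iv); restricting the $\lambda_{\mathbf{d}}$ to $d_i$ free of primes $\leq x^\rho$ also does not force $L_i(n)$ itself to be rough. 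The actual route in \cite{MaynardII} and \cite{matomaki-shao} (and in Pintz's \cite{kAP-Maynard2}) is a two-step argument: first run the sieve \emph{without} the roughness constraint to get a lower bound on $\sum_{n:\,M_n\geq m}\omega_n w_n$, and then separately \emph{upper}-bound the contribution from those $n$ with a prime factor of some $L_i(n)$ in a middle range $(w_0,x^\rho]$, by summing over such primes $p$, using the Selberg-sieve structure of $w_n$ together with the Brun--Titchmarsh input (iv) to show that each such $p$ contributes $\ll p^{-1}\mathcal{S}_1(\log\cdots)^{O(1)}$, so that the total over $p\leq x^\rho$ is $\ll \rho^{1/2}\mathcal{S}_1$ say, hence negligible when $\rho$ is chosen small enough in terms of $k,\theta$. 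Your ``main obstacle'' remark identifies the right place to worry, but calling the step ``essentially mechanical'' is not accurate, and as written your plan would fail at the moduli stage. (A minor point: your opening paragraph pairs (i)--(iv) with the wrong descriptors; in the statement, (i) is the calibration/``prime number theorem'' input, (ii) is Type~I well-distribution, (iii) is Bombieri--Vinogradov, and (iv) is Brun--Titchmarsh.)
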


\begin{proof}
This is  \cite[Theorem 6.2]{matomaki-shao} (with $\alpha=1$ there), which adds weights to the corresponding statement  in~\cite{MaynardII}.
\end{proof}

In the next sieve lemma for Chen primes, we need the notion of well-factorable weights.

\begin{definition} We say that a sequence $\lambda:[N]\to \mathbb{R}$ is \emph{well-factorable} of level $D\geq 1$, if for any $R,S\geq 1$ satisfying $D=RS$, we can write $\lambda=\lambda_1*\lambda_2$ for some sequences $|\lambda_1|,|\lambda_2|\leq 1$ supported on $[1,R]$ and $[1,S]$, respectively, with $*$ denoting Dirichlet convolution. 
\end{definition}

\begin{proposition}[Chen's sieve]  \label{prop: chen} Let $\varepsilon>0$ be a small enough absolute constant.  Let $(\omega_n)_{n\leq x}$ be any nonnegative sequence, let $\mathcal{L}=\{L_1,L_2\}$ with $L_i(n)=a_in+b_i$, $1\leq a_i\leq \log x$, $|b_i|\leq x$.  Suppose that $(\omega_n)$ satisfies the following hypotheses:

\begin{enumerate}[label=\upshape(\roman*)]
    \item (Bombieri--Vinogradov with well-factorable weights) We have
\begin{align*}
\Big|\sum_{\substack{r\leq x^{1/2-\varepsilon}\\(r,a_2(a_1b_2-a_2b_1))=1}}\lambda(r)\Big(\sum_{\substack{n\leq x\\L_2(n)\equiv 0\PMod r\\L_1(n)\in \mathbb{P}}}\omega_n-\frac{a_1}{\varphi(a_1 r)}\sum_{n\leq x}\frac{\omega_n}{\log L_1(n)}\Big)\Big|\ll \frac{\sum_{n\leq x}\omega_n}{(\log x)^{10}}    
\end{align*}
for any well-factorable sequence $\lambda$  of level $x^{1/2-\varepsilon}$, and also for $\lambda=1_{p\in [P,P')}*\lambda'$ with $\lambda'$ any well-factorable sequence of level $x^{1/2-\varepsilon}/P$ 
with $P' \in [P, 2P]$ and $P\in [x^{1/10},x^{1/3-\varepsilon}]$. 

\item (Bombieri--Vinogradov for almost primes with well-factorable weights) For $j\in \{1,2\}$ we have
\begin{align*}
\Big|\sum_{\substack{r\leq x^{1/2-\varepsilon}\\(r,a_1(a_1b_2-a_2b_1))=1}}\lambda(r)\Big(\sum_{\substack{n\leq x\\L_1(n)\equiv 0\PMod r\\L_2(n)\in B_j}}\omega_n-\frac{\varphi(a_2)}{\varphi(a_2r)}\sum_{\substack{n\leq x\\L_2(n)\in B_j}}\omega_n\Big)\Big|\ll \frac{\sum_{n\leq x}\omega_n}{(\log x)^{10}},    
\end{align*}
where $\lambda(r)$ is as above and 
\begin{align*}
B_1&=\{p_1p_2p_3:\,\, x^{1/10}\leq p_1\leq x^{1/3-\varepsilon},\,\, x^{1/3-\varepsilon}\leq p_2\leq (2x/p_1)^{1/2},\,\, p_3\geq x^{1/10}\},\\
B_2&=\{p_1p_2p_3:\,\, x^{1/3-\varepsilon}\leq p_1\leq p_2\leq (2x/p_1)^{1/2},\,\, p_3\geq x^{1/10}\}.
\end{align*}

\item (Upper bound on almost primes): For $j\in \{1,2\}$ we have
\begin{align*}
\sum_{\substack{n\leq x\\L_2(n)\in B_j}}\omega_n \leq (1+\varepsilon)\cdot \frac{|B_j\cap [1,L_2(x)]|}{\varphi(a_2)x}\sum_{n\leq x}\omega_n.  \end{align*}
\end{enumerate}
Then, for $x\geq x_0$, we have
\begin{align*}
\sum_{\substack{n\leq x\\ L_1(n)\in \mathbb{P}\\L_2(n)\in P_2\\p\mid L_2(n)\Longrightarrow p\geq x^{1/10}}}\omega_n\geq\delta_0 \frac{\FS(\mathcal{L})}{(\log x)^2}\sum_{n\leq x}\omega_n-O(x^{0.9}\max_{n}\omega_n),    
\end{align*}
for some absolute constant $\delta_0>0$, where the singular series $\mathfrak{S}(L)$ is given by \eqref{eq27}.
\end{proposition}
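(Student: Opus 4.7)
The proof is a weighted version of Chen's sieve argument for almost twin primes, arranged so that every sieve-theoretic input comes from (i), (ii), (iii). Split the target sum as
\[
\sum_{\substack{n \leq x \\ L_1(n) \in \mathbb{P},\ L_2(n) \in P_2 \\ p \mid L_2(n) \Rightarrow p \geq x^{1/10}}} \omega_n \;=\; S(\CA, x^{1/10}) - \Omega_1 - \Omega_2,
\]
where $S(\CA, z) := \sum_{n \leq x,\ L_1(n) \in \mathbb{P},\ (L_2(n), P(z)) = 1} \omega_n$ (with $P(z) := \prod_{p<z} p$) and $\Omega_j := \sum_{n \leq x,\ L_1(n) \in \mathbb{P},\ L_2(n) \in B_j} \omega_n$. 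This identity is elementary: when $L_2(n) \leq (\log x)\,x + x$ has every prime factor $\geq x^{1/10}$ and $\Omega(L_2(n)) \geq 3$, the two smallest prime factors $p_1 \leq p_2$ necessarily satisfy $p_1 \in [x^{1/10}, x^{1/3}]$ and $p_2 \in [p_1, (2x/p_1)^{1/2}]$, placing $L_2(n) \in B_1 \cup B_2$; otherwise $L_2(n) \in P_2$.

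For the main term $S(\CA, x^{1/10})$ I would apply Iwaniec's lower-bound linear sieve with well-factorable weights $\lambda^-$ of level $D = x^{1/2-\varepsilon}$ to the sequence $(\omega_n\,\mathbf{1}_{L_1(n) \in \mathbb{P}})_{n \leq x}$. Hypothesis (i) with $\lambda = \lambda^-$ supplies precisely the level-of-distribution input that Iwaniec's sieve requires, with an acceptable error $\ll \sum \omega_n /(\log x)^{10}$. Standard Mertens manipulations together with the definition of $\FS(\mathcal{L})$ convert the sieve output into a bound of the shape
\[
S(\CA, x^{1/10}) \;\geq\; \bigl(8\,e^{-\gamma} f(5) + O(\varepsilon)\bigr)\,\frac{\FS(\mathcal{L})}{(\log x)^2} \sum_{n \leq x}\omega_n,
\]
where $f$ is Iwaniec's lower-bound linear sieve function and $s = \log D / \log x^{1/10} = 5 + O(\varepsilon)$.

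For $\Omega_2$ (where $L_2(n) \in B_2$ has smallest prime factor $\geq x^{1/3-\varepsilon}$) I would apply Iwaniec's upper-bound linear sieve to detect the primality of $L_1(n)$; its weights are well-factorable of level $x^{1/2-\varepsilon}$, and hypothesis (ii) with $j = 2$ is exactly the Bombieri--Vinogradov statement required. This reduces $\Omega_2$ to a constant times $\sum_{n,\, L_2(n) \in B_2}\omega_n /\log x$, which (iii) bounds in terms of $|B_2 \cap [1, L_2(x)]|$; a routine Mertens estimate for the latter then gives $\Omega_2 \leq c_2\,\FS(\mathcal{L})/(\log x)^2\,\sum \omega_n$. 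For $\Omega_1$, where $L_2(n) \in B_1$ has a prime factor $p_1 \in [x^{1/10}, x^{1/3-\varepsilon}]$, I would write $L_2(n) = p_1 m$, dyadically decompose $p_1 \in [P, P')$, and apply Iwaniec's upper-bound sieve to $m$ at level $x^{1/2-\varepsilon}/P$. Summing over $p_1 \in [P, P')$ produces aggregated sieve coefficients of the form $\mathbf{1}_{p \in [P, P')} * \lambda'$ with $\lambda'$ well-factorable of level $x^{1/2-\varepsilon}/P$, which is precisely the second family of test sequences for which hypothesis (i) is stated. This yields $\Omega_1 \leq c_1\,\FS(\mathcal{L})/(\log x)^2\,\sum\omega_n$.

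Combining the three bounds gives a net lower bound $(8\,e^{-\gamma} f(5) - c_1 - c_2 + O(\varepsilon))\,\FS(\mathcal{L})/(\log x)^2\sum\omega_n$. Chen's classical numerical inequality, re-examined in the weighted setting in \cite{matomaki-shao}, shows that this constant is bounded below by some absolute $\delta_0 > 0$. The additive error $O(x^{0.9}\max_n \omega_n)$ absorbs the contribution from $n \leq x^{0.9}$, for which a sieve of level $x^{1/2-\varepsilon}$ is too short to certify the primality of $L_1(n)$. The main obstacle is purely numerical: verifying $8\,e^{-\gamma} f(5) > c_1 + c_2$. However, this inequality is a statement about sieve functions and Mertens-type sums and is independent of the weights $\omega_n$; every appearance of a count $\#\{n : \cdots\}$ in the classical argument can be replaced formally by the corresponding $\omega_n$-weighted sum, with the required Bombieri--Vinogradov and upper-bound inputs being exactly those in (i)--(iii).
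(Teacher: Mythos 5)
The paper's proof is a one-line citation to \cite[Theorem 6.4]{matomaki-shao}, checking only that the two minor modifications (allowing $|b_i|\leq x$, and using $\lambda(r)$ in place of $\mu^2(r)\lambda(r)$) are harmless; your proposal instead attempts to reconstruct the weighted Chen-sieve argument itself, which is a much longer route but, in spirit, is what Matom\"aki--Shao's proof does.

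However, your reconstruction contains a genuine gap at the opening step. The claimed ``elementary identity''
\[
\sum_{\substack{n\leq x\\ L_1(n)\in\mathbb{P},\ L_2(n)\in P_2\\ p\mid L_2(n)\Rightarrow p\geq x^{1/10}}}\omega_n
\;=\; S(\CA, x^{1/10})-\Omega_1-\Omega_2
\]
is false. It asserts that every $L_2(n)$ with all prime factors $\geq x^{1/10}$ and $\Omega(L_2(n))\geq 3$ lies in $B_1\cup B_2$, but both $B_1$ and $B_2$ consist of products of \emph{exactly three} primes, and moreover $B_1$ requires $p_2\geq x^{1/3-\varepsilon}$ while $B_2$ requires $p_1\geq x^{1/3-\varepsilon}$. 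Thus, for instance, $L_2(n)=p_1p_2p_3$ with $p_1,p_2\in[x^{1/10},x^{1/3-\varepsilon})$ lies in neither set, and neither does any $L_2(n)$ with $\Omega(L_2(n))\geq 4$. Because $B_1\cup B_2$ is a \emph{strict subset} of $\{m: p\mid m\Rightarrow p\geq x^{1/10},\ \Omega(m)\geq3\}$, what you actually have is the inequality $\text{(target)}\leq S-\Omega_1-\Omega_2$, which runs in the \emph{wrong direction} for the desired lower bound; your ``combining the three bounds'' step is therefore not valid. Chen's argument does not proceed by a set-theoretic decomposition at all: it relies on a weighted inequality (Chen's switching/weighting trick, with carefully chosen fractional coefficients) that produces a lower bound $1_{L_2(n)\in P_2}\geq w(n)$ on the support of the sieve, and the quantities involving $B_1,B_2$ that appear in hypotheses (ii) and (iii) arise only \emph{after} the switching step as the objects needing upper bounds. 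Without reproducing that inequality, the numerical comparison $8e^{-\gamma}f(5)>c_1+c_2$ you invoke has no content, since the quantities $c_1,c_2$ are not correctly identified.
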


\begin{proof}
This is \cite[Theorem 6.4]{matomaki-shao} (which adds weights to Chen's sieve), with the slight modification that $|b_i|$ may be as large as $x$ (as opposed to $x^{o(1)}$). However, this restriction on $|b_i|$ was not used in the proof. Also, in \cite[Theorem 6.4]{matomaki-shao} $\lambda(r)$ was replaced with $\mu(r)^2\lambda(r)$, but since in the proof the sequence $\lambda(r)$ is always a sieve coefficient supported on squarefree numbers, this makes no difference.  
\end{proof}

For stating the weighted sieve for primes of the form $x^2+y^2+1$, we need a notion slightly different from admissibility, which  we call amenability, following \cite[Definition 3.1]{joni}.

\begin{definition} We say that a linear function $L(n)=Kn+b$ with $K\geq 1$ and $b\in \mathbb{Z}$ is \emph{amenable} if 
\begin{enumerate}[label=\upshape(\roman*)]
\item $6^3\mid K$;

\item $(b,K)=(b-1,s(K))=1$, where $s(n):=\prod_{p\mid n,p\equiv -1\PMod 4, p\neq 3}p$;

\item $b-1=2^j3^{2t}(4h+1)$ for some $h\in \mathbb{Z}$ with $3\nmid 4h+1$, and $j,t\geq 0$ with $2^{j+2}3^{2t+1}\mid K$. 
\end{enumerate}
\end{definition}

Here condition (ii) guarantees that there are no local obstructions to $L(n)$ being a prime of the form $x^2+y^2+1$. Conditions (i) and (iii) are introduced for technical reasons to do with sieves in \cite{joni}, but they are not very restrictive.

\begin{proposition}[Weighted sieve for primes of the form $x^2+y^2+1$] \label{prop: sumsoftwosquares}There exists some small $\varepsilon>0$ such that the following holds. Let $(\omega_n)_{n\leq x}$ be any nonnegative sequence, and let $L(n)=Kn+b$ be amenable with $1\leq K\leq \log x$.  Suppose that $(\omega_n)$ obeys the following hypotheses:

\begin{enumerate}[label=\upshape(\roman*)]
    \item For any sequence $(g(\ell))_{\ell}$ supported on $[1,x^{0.9}]$ and of the form $g=\alpha*\beta$ with $\alpha$ supported on $[x^{1/(3+\varepsilon)}, x^{1-1/(3+\varepsilon)}]$ and $|\alpha(n)|, |\beta(n)|\leq 1$, we have
\begin{align}\label{eq30}
\Big|\sum_{\substack{r\leq x^{1/2-\varepsilon}\\(r,K)=1}}\lambda_r^{+,\textnormal{LIN}}\sum_{\substack{\ell\leq x^{0.9}\\(\ell,K)=\delta\\(\ell,r)=1}}g(\ell)\Big(\sum_{\substack{n\leq x\\p\leq x\\L(n)=\ell p+1\\L(n)\equiv 0 \PMod r}}\omega_n-\frac{1}{\varphi(r)}\frac{K}{\varphi(\frac{K}{\delta})}\sum_{n\leq x}\frac{\omega_n}{\ell \log \frac{Kn}{\ell}}\Big)\Big|&\ll \frac{\sum_{n\leq x}\omega_n}{(\log x)^{100}},
\end{align}
where $\delta:=(b-1,K)$ and $\lambda_{r}^{+,\textnormal{LIN}}$ are the upper bound linear sieve coefficients of level $x^{1/2-\varepsilon}$ and sifting parameter $x^{\frac{1}{5}}.$ 

\item We have

\begin{align}\label{eq31}
\Big|\sum_{\substack{r\leq x^{3/7-\varepsilon}\\ (r,K)=1}}\lambda_r^{-,\textnormal{SEM}}\Big(\sum_{\substack{n\leq x\\L(n)\in \mathbb{P}\\L(n)\equiv 1 \PMod r}}\omega_n-\frac{1}{\varphi(r)}\frac{ K}{\varphi(K)}\sum_{n\leq x}\frac{\omega_n}{\log(Kn)}\Big)\Big|&\ll \frac{\sum_{n\leq x}\omega_n}{(\log x)^{100}},    
\end{align}
where $\lambda_{r}^{-,\textnormal{SEM}}$ are the lower bound semilinear sieve coefficients of level $x^{3/7-\varepsilon}$ and sifting parameter $x^{1/(3+\varepsilon)}$. 
\end{enumerate}

Then for some absolute constant $\delta_0>0$ we have
\begin{align}\label{eq26}
\sum_{\substack{n\leq x\\L(n)\in \mathbb{P}\\p\mid L(n)-1\implies p\not \equiv -1\PMod 4}}\omega_n\geq \delta_0\frac{\mathfrak{S}(L)}{(\log x)^{3/2}}\sum_{n\leq x}\omega_n-O(x^{1/2}),    
\end{align}
where the singular series $\mathfrak{S}(L)$ is given by
\begin{align}\label{singular_iwaniec}\begin{split}
\mathfrak{S}(L)&:=\prod_{\substack{p\equiv -1\PMod 4\\p\neq 3}}\left(1-\frac{|\{n\in \mathbb{Z}/p\mathbb{Z}:\,\, L(n)\equiv 0\,\, \textnormal{or}\,\, 1  \PMod p\}|}{p}\right)\left(1-\frac{2}{p}\right)^{-1}\\
 &\cdot  \prod_{p\not \equiv -1 \PMod 4}\left(1-\frac{|\{n\in \mathbb{Z}/p\mathbb{Z}:\,\, L(n)\equiv 0 \PMod p\}|}{p}\right)\left(1-\frac{1}{p}\right)^{-1}.  
 \end{split}
\end{align}
\end{proposition}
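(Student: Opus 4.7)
The plan is to adapt Iwaniec's argument for primes of the form $x^2+y^2+1$ \cite{iwaniec-quadraticform} to the weighted setting by replacing the counting measure with $\omega_n$ throughout. An essentially analogous weighted sieve argument was already carried out in \cite[Section 3]{joni}, whose sieve inputs take exactly the form of hypotheses (i)--(ii); I would follow that blueprint.

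By Fermat's two-square theorem together with the amenability conditions on $L$ (which ensure that the $2$-adic and $3$-adic parts of $L(n)-1$ are always of the right shape to be a sum of two squares, so only the prime factors $\equiv -1 \PMod 4$ with $p>3$ can create an obstruction), a prime $L(n)$ is of the form $x^2+y^2+1$ if and only if no prime $p \equiv -1 \PMod 4$ with $p > 3$ divides $L(n)-1$ to an odd power. Since $L(n)-1 \leq Kx+b$, at most one such prime can exceed $z_1 := (Kx+b)^{1/2}$; separating that contribution by a standard Brun--Titchmarsh bound produces the $O(x^{1/2})$ error in \eqref{eq26}. The task is then reduced to lower-bounding the weighted sifting function
\[ S := \sum_{\substack{n \leq x,\, L(n) \in \mathbb{P}\\(L(n)-1,\,P(z_1))=1}} \omega_n, \qquad P(z) := \prod_{\substack{p \equiv -1 \PMod 4 \\ 3 < p < z}} p. \]

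Setting $z_0 := x^{1/(3+\varepsilon)}$, Buchstab's identity yields
\[ S \;=\; S(A;\,P,\,z_0) \;-\; \sum_{\substack{z_0 \leq p < z_1 \\ p \equiv -1 \PMod 4}} S(A_p;\,P,\,p), \]
where $A$ encodes the set $\{n \leq x : L(n)\in \mathbb{P}\}$ weighted by $\omega_n$, $A_p$ is the sub-weighting with the extra constraint $p \mid L(n)-1$, and $S(\cdot;\,P,\,z)$ denotes the usual sifting function. For the first term I would apply the lower-bound semilinear sieve (dimension $\tfrac12$) of level $x^{3/7-\varepsilon}$; the corresponding $s$-parameter $s_0 = (3/7-\varepsilon)(3+\varepsilon)>1$ ensures that the semilinear sieve function $f_{1/2}(s_0)$ is strictly positive. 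The resulting sieve remainder is exactly what hypothesis (ii) controls, and the resulting main term is $\gg V(z_0) \cdot (K/\varphi(K)) \sum_n \omega_n / \log(Kn)$, where $V(z_0) = \prod_{p \in P(z_0)}(1 - 1/(p-1)) \asymp (\log z_0)^{-1/2}$. For the Buchstab tail, writing $L(n)-1 = p\ell$ and decomposing the residual sieve on $\ell$ by the upper-bound linear sieve coefficients $\lambda_r^{+,\mathrm{LIN}}$ of level $x^{1/2-\varepsilon}$ and sifting parameter $x^{1/5}$, each summand can be put in the shape of hypothesis (i) with $g(\ell)$ a well-factorable convolution $\alpha * \beta$ (the support condition on $\alpha$ being met because $\ell \in [z_0,\,Kx/z_0]\subset [x^{1/(3+\varepsilon)},\,x^{1-1/(3+\varepsilon)}]$). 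Hypothesis (i) then produces an asymptotic for this term with an error $O(\sum_n \omega_n / (\log x)^{100})$.

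Assembling the two contributions, the resulting local factors combine by standard Mertens-type manipulations into the singular series \eqref{singular_iwaniec}: the $(1-2/p)(1-1/p)^{-1}$ factor at primes $p \equiv -1 \PMod 4$ comes from combining the semilinear sieve density $V(z_0)$ with the $\log^{-1}$ correction coming from $L(n)-1=p\ell$ as $p$ ranges in the Buchstab tail, while the other factors come from the sieve requiring $L(n)\in \mathbb{P}$. The $(\log x)^{-3/2}$ density arises as $(\log x)^{-1}$ (from sifting $L(n)$ for primality) times $(\log z_0)^{-1/2} \asymp (\log x)^{-1/2}$ (from the semilinear sieve on $L(n)-1$); positivity of the absolute constant $\delta_0$ follows from $f_{1/2}(s_0)>0$ and an explicit estimate of the Buchstab tail against the semilinear main term, exactly as in Iwaniec's original analysis. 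The principal obstacle is the sieve-theoretic one: ensuring that the Buchstab-expanded tail genuinely fits the bilinear shape prescribed by hypothesis (i), which in turn forces the precise choice of thresholds $x^{1/(3+\varepsilon)}$ and $x^{3/7-\varepsilon}$ that balance the semilinear and linear sieve levels.
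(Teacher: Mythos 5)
Your proposal follows the same underlying route as the paper, but at a very different level of detail: the paper's actual proof is a single sentence invoking \cite[Theorem 6.5]{joni} as a black box, with the parameter choices $\rho_1 = 1/2-\varepsilon$, $\rho_2 = 3/7-\varepsilon$, $\sigma = 3+\varepsilon$, and noting that the hypotheses (i)--(ii) of the present proposition are exactly the hypothesis $H(\rho_1,\rho_2,\sigma)$ of that theorem. You instead outline the underlying sieve machinery (Buchstab split, semilinear lower-bound sieve for the main term, linear upper-bound sieve for the Buchstab tail, thresholds tuned so that both sieve levels and the bilinear support range are compatible), which is essentially the proof of that black-box result. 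Since the paper deliberately does not re-prove it, your approach is not wrong, just considerably heavier; if the goal is to prove this particular proposition, the citation suffices, and you have the citation in hand.

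Two imprecisions worth flagging in your sketch. First, the conclusion \eqref{eq26} sifts out \emph{all} primes $p\equiv -1\PMod 4$ from $L(n)-1$, not merely those dividing to odd multiplicity; the latter is the condition for $L(n)-1$ to be a sum of two squares, but the sifting set here is a strict subset of that, and the deduction of Theorem~\ref{th: Linnik} uses this stronger restriction (cf.\ the definition of $\theta_3$ and the local factors). Your framing of ``the task is reduced to lower-bounding the sifting function $S$'' should therefore start from the $\eqref{eq26}$ condition directly, rather than from the two-squares characterization. Second, your claim that the $O(x^{1/2})$ error arises from separating the single large prime $p > z_1$ by Brun--Titchmarsh is not obviously right as stated: the set of $n\le x$ with $L(n)-1$ divisible by some $p\equiv -1\PMod 4$ with $p> (Kx+b)^{1/2}$ has size $\gg x/\log x$, not $O(x^{1/2})$, so that contribution has to be accounted for inside the Buchstab tail (where $\ell = (L(n)-1)/p$ is small), not discarded outright; the $O(x^{1/2})$ term in the cited result has a different provenance (e.g.\ degenerate cases of small $\ell$ or squarefull values). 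Neither issue is fatal, because the black-box theorem already absorbs both, but if you were to write this out in full these points would need correcting.
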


\begin{proof} This follows from \cite[Theorem 6.5]{joni}, taking $\rho_1=1/2-\varepsilon$, $\rho_2=3/7-\varepsilon$ and $\sigma=3+\varepsilon$ there and using the fact that hypothesis $H(\rho_1,\rho_2,\sigma)$ there holds with these parameters (the $n$ summation in \cite[Theorem 6.5]{joni} is over a dyadic interval, but this clearly makes no difference).
\end{proof}

\section{Bombieri--Vinogradov and Type I/II estimates for nilsequences}
\label{sec:BV}

In this section, we collect Bombieri--Vinogradov type estimates for nilsequences from~\cite{shao2020bombierivinogradov}  that we shall need. Theorems~\ref{equidist-1/3} and~\ref{equidist-1/2} below are slight generalizations of~\cite[Theorem 4.3]{shao2020bombierivinogradov} and~\cite[Theorem 4.4]{shao2020bombierivinogradov}, respectively.

\begin{theorem}\label{equidist-1/3}
Let an integer $s\geq 1$, a large real number $\Delta\geq 2$, and a small real number $\varepsilon\in (0,1/3)$ be given. Let $L(n) = an+b$ for some $1 \leq a \leq x^{\ee/2}$ and $|b| \leq x$ with $(a,b)=1$. There exists a constant $\kappa = \kappa(s,\Delta,\ee) > 0$, such that for any $x \geq 2$, $\eta>0$ and any nilsequence $\xi \in \Xi_s^0(\Delta, \eta^{-\kappa};\eta,x)$  we have
\[
\sum_{d\leq x^{1/3-\varepsilon}}\max_{(L(c),d)=1} \Big|\sum_{\substack{n \leq x \\ n \equiv c\PMod{d}}} \Lambda(L(n)) \xi(n)\Big|\ll\eta^{\kappa} a x (\log x)^2.
\]
\end{theorem}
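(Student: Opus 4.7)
Theorem~\ref{equidist-1/3} is a slight generalization of \cite[Theorem 4.3]{shao2020bombierivinogradov} (the case $L(n) = n$), and the plan is to adapt the proof of that theorem by applying a standard combinatorial identity (e.g.\ Heath--Brown's or Vaughan's) to $\Lambda(L(n))$.

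Such an identity decomposes $\Lambda(L(n))$ into a bounded linear combination of type I and type II sums of the form
\[
T(\alpha,\beta,d,c) = \sum_{\substack{n \leq x \\ n \equiv c \PMod d}} \Big(\sum_{\substack{k\ell = L(n)\\ k \asymp K_0,\, \ell \asymp L_0}} \alpha(k)\beta(\ell)\Big) \xi(n),
\]
with $|\alpha|, |\beta| \leq 1$, $K_0 L_0 \asymp ax$, and with either $K_0 \leq x^{\delta}$ (type I, $\alpha$ smooth) or $K_0, L_0 \in [x^{\delta}, x^{1-\delta}]$ (type II). The crucial observation is that the linearity of $L$ makes the divisibility $k \mid L(n)$ equivalent to a congruence on $n$, so that fixing $k$ in $T$ reduces the inner $n$-sum to a sum of $\xi(n)$ over an arithmetic progression of modulus $O(dk)$ inside $[x]$. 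For type I sums, this already yields the required bound via the equidistribution of $\xi$ on short progressions (a standard consequence of the total $\eta$-equidistribution of the underlying polynomial sequence $g$, as in \cite[Proposition 2.5 or similar]{shao2020bombierivinogradov}). For type II sums, one applies Cauchy--Schwarz in $k$, producing a correlation sum which, for each pair $(\ell, \ell')$, reduces to the equidistribution of the product polynomial sequence $n \mapsto (g(n_{k,\ell}), g(n_{k,\ell'}))$ on $G/\Gamma \times G/\Gamma$; this is controlled by the quantitative equidistribution theorem \cite[Theorem 2.9]{GT-nilsequence} applied to the product sequence.

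The $x^{1/3-\varepsilon}$ threshold on the modulus $d$ arises from balancing the Cauchy--Schwarz so that the $k$-sum retains length $\gg (ad)^{1+\delta'}$; this is the classical threshold for the simplest type II argument that does not invoke the large sieve (which would be needed for the range $x^{1/2-\varepsilon}$, cf.\ Theorem~\ref{equidist-1/2}). The main obstacle will be carefully tracking the dependence on $a$ through each step: one must verify that the loss on the right-hand side is exactly the factor $a$ (and not a larger power), which follows from the observation that $n \mapsto L(n)$ is injective and the outer summation is over $n \leq x$ rather than over $m = L(n) \leq ax+b$. Once this bookkeeping is in place, the remaining estimates parallel those of \cite[Theorem 4.3]{shao2020bombierivinogradov} essentially verbatim.
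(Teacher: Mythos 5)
Your proposal is correct in broad outline, but it takes a genuinely different and considerably longer route than the paper. You propose to re-run the entire machinery from the proof of \cite[Theorem 4.3]{shao2020bombierivinogradov} — Heath--Brown/Vaughan decomposition of $\Lambda(L(n))$, type I sums handled by equidistribution on short progressions, type II sums handled by Cauchy--Schwarz and equidistribution of the product sequence on $(G/\Gamma)^2$ — adapting each step to keep track of the modulus $a$ of $L$. This is workable in principle, since (as you note) $k\mid L(n)$ is again a congruence condition on $n$, but it amounts to re-proving the base case rather than using it.

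The paper instead treats \cite[Theorem 4.3]{shao2020bombierivinogradov} as a black box and performs a change of variables at the level of the nilsequence. Writing $\xi(n) = F(g(n)\Gamma)$, one finds (via Taylor coefficients in Mal'cev coordinates, \cite[Lemma 6.7]{GT-nilsequence}) a polynomial sequence $g'$ with $g'(L(n)) = g(n)$, and then shows by the quantitative Kronecker theorem \cite[Theorem 2.9]{GT-nilsequence} and a standard manipulation of smoothness norms that $\{g'(n)\}_{n\leq ax}$ remains totally $\eta^c$-equidistributed for some $c=c(s,\Delta)>0$ (the key point being that passing between the polynomials $\chi\circ g'(n) = \sum \alpha_i n^i$ and $\chi\circ g(n) = \sum \alpha_i(an+b)^i$ changes the $C^\infty$ bounds only by admissible factors). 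With $\xi'(n) := F(g'(n)\Gamma)$ one then has
\[
\sum_{\substack{n\leq x \\ n \equiv c\PMod{d}}} \Lambda(L(n))\xi(n) = \sum_{\substack{L(0) < m \leq L(x) \\ m \equiv L(c)\PMod{ad}}} \Lambda(m) \xi'(m),
\]
so the left-hand side of the theorem is dominated by $\sum_{d'\leq ax^{1/3-\varepsilon}}\max_{(c',d')=1}|\cdots|$ over the interval $(L(0),L(x)]$ of length $\asymp ax$, and the base theorem applies (using $a\leq x^{\varepsilon/2}$ to keep the modulus range $ax^{1/3-\varepsilon}$ below $(ax)^{1/3-\varepsilon'}$). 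This buys a one-paragraph reduction with no re-examination of the combinatorial decomposition or the dispersion/Cauchy--Schwarz steps. Your version would also need a careful argument, which you do not supply, that the losses in the type II Cauchy--Schwarz step are only $a^{O(1)}$; the paper's reduction sidesteps this bookkeeping entirely, absorbing the $a$-dependence into the single change of variables.

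One further caveat about your sketch: the type II step as written (``Cauchy--Schwarz in $k$, reduces to equidistribution of $n\mapsto(g(n_{k,\ell}),g(n_{k,\ell'}))$'') is imprecise — after Cauchy--Schwarz in one variable one obtains correlations of $\xi\circ L^{-1}$ at two different integer arguments, and the correct statement involves equidistribution of a polynomial sequence on $G\times G$ with a diagonal shift — and you would need to check that this equidistribution is inherited from that of $g$ up to an $\eta^{O(1)}$ loss, which is exactly the kind of verification the paper avoids.
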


\begin{theorem}\label{equidist-1/2}
Let integers $s\geq 1$, $c \neq 0$, a large real number $\Delta\geq 2$, and a small real number $\varepsilon\in (0,1/2)$ be given. Let $L(n) = an+b$ for some $1 \leq a \leq x^{\ee/2}$ and $|b| \leq x$ with $(a,b)=1$. There exists a constant $\kappa = \kappa(s,\Delta,\varepsilon) > 0$, such that for any well-factorable sequence $(\lambda_d)$ of level $x^{1/2-\varepsilon}$ with $x \geq 2$ and any nilsequence $\xi \in \Xi_s^0(\Delta, \eta^{-\kappa};\eta,x)$ with $\eta > 0$, we have 
\[
\Big|\sum_{\substack{d\leq x^{1/2-\varepsilon} \\ (d,c)=1}} \lambda_d \sum_{\substack{n \leq x \\ L(n) \equiv c\PMod{d}}} \Lambda(L(n)) \xi(n) \Big| \ll \eta^{\kappa} ax (\log x)^2.
\]
\end{theorem}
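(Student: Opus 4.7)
The approach is to extend the proof of~\cite[Theorem 4.4]{shao2020bombierivinogradov}, which treats the case $L(n) = n$, to the general linear function $L(n) = an+b$ with $1 \leq a \leq x^{\varepsilon/2}$ and $(a,b)=1$. The original argument decomposes $\Lambda(n)$ via Heath--Brown's identity into Type I and Type II sums, bounds the resulting bilinear forms (twisted by the nilsequence $\xi$ and restricted to arithmetic progressions $n \equiv c \PMod d$) using quantitative equidistribution on nilmanifolds, and then exploits the well-factorable structure of $(\lambda_d)$ in a standard Bombieri--Vinogradov fashion.

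To generalize, apply the same Heath--Brown decomposition to $\Lambda(L(n))$. For moduli $d$ with $(a,d)=1$---the generic case---the condition $L(n) \equiv c \PMod d$ is equivalent to the single congruence $n \equiv c' \PMod d$ with $c' \equiv (c - b)\overline{a} \PMod d$, so the bilinear sums to be estimated have exactly the same shape as in the original argument. The Type I and Type II estimates for nilsequences on APs used in~\cite[Theorem 4.4]{shao2020bombierivinogradov} (which in turn rely on the quantitative Leibman-type equidistribution theory of~\cite{GT-nilsequence}) then carry over without structural change. The only modification is that $\Lambda(L(n))$ for $n \leq x$ takes arguments in $[1, ax+|b|]$, accounting for the extra factor $a$ in the final bound $\eta^{\kappa} a x (\log x)^2$. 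For the remaining moduli with $(a,d) > 1$, factor $d = d_1 d_2$ with $d_1 \mid a^{\infty}$ and $(d_2, a) = 1$: the congruence $L(n) \equiv c \PMod{d_1}$ is either unsolvable (if $c \not\equiv b \PMod{\gcd(a,d)}$, giving a vanishing contribution) or reduces to a single condition modulo $d_2$, treated as above. Since $a \leq x^{\varepsilon/2}$, the number of such $d_1 \leq x^{1/2-\varepsilon}$ is at most $x^{O(\varepsilon)}$, a loss absorbed into $\kappa$.

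The main obstacle I anticipate is that several constants in the proof of~\cite[Theorem 4.4]{shao2020bombierivinogradov}---notably within the quantitative equidistribution inputs and the van der Corput shifts in the Type II estimate---depend polynomially on the coefficients of the AP and of the linear function. In the original work these coefficients are $O(1)$, whereas in our setting they can be as large as $x^{\varepsilon/2}$ (for $a$) or $x$ (for $b$). Verifying that these polynomial losses can be uniformly absorbed into the final exponent $\kappa(s,\Delta,\varepsilon)$, and in particular tracking that the van der Corput shift does not amplify the dilation by $a$ into a destructive loss, requires a careful but routine bookkeeping exercise, with no new analytic ideas beyond those already present in~\cite{shao2020bombierivinogradov}.
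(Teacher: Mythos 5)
Your proposal takes a genuinely different route from the paper. Rather than re-entering the proof of \cite[Theorem 4.4]{shao2020bombierivinogradov}, the paper performs a black-box reduction: it constructs a polynomial sequence $g'$ with $g'(L(n)) = g(n)$ (so that $\xi'(m):=F(g'(m)\Gamma)$ satisfies $\xi'(L(n))=\xi(n)$), changes variables to write the inner sum as $\sum_{L(0)<m\le L(x),\, m\equiv b\PMod a,\, m\equiv c\PMod d}\Lambda(m)\xi'(m)$, and then invokes the cited theorem with $L(n)=n$ applied to the nilsequence $\xi'$ on the dilated range with moduli $ad \le ax^{1/2-\varepsilon}$. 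The entire new work is a transfer-of-equidistribution lemma: showing that $(g'(n))_{n\le ax}$ is totally $\eta^c$-equidistributed for some $c=c(s,\Delta)>0$, proved via the quantitative Kronecker theorem and a coefficient-by-coefficient comparison of the $C^\infty$-norms of $\chi\circ g'$ and $\chi\circ g=(\chi\circ g')\circ L$. This avoids re-tracking the Heath--Brown decomposition, the van der Corput shifts, and the well-factorable sieve bookkeeping entirely.

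There is a concrete gap in your proposal corresponding precisely to this transfer step. You claim that, after converting $L(n)\equiv c\PMod d$ into $n\equiv c'\PMod d$, the bilinear sums "have exactly the same shape as in the original argument." That is not so: the Heath--Brown decomposition of $\Lambda(L(n))$ runs over divisors of $m=L(n)$, so the resulting bilinear forms are naturally in the $m$-variable and carry the twist $\xi((m-b)/a)=\xi'(m)$, a \emph{different} nilsequence on the dilated range $(L(0),L(x)]$, together with an extra congruence $m\equiv b\PMod a$. The Type I/II estimates from \cite{shao2020bombierivinogradov} require this reparametrized $\xi'$ to be quantitatively equidistributed, which does not follow automatically from the equidistribution of $g$ on $[x]$; one must prove that the substitution $n\mapsto an+b$ degrades the equidistribution parameter only polynomially, using $a\le x^{\varepsilon/2}$ and $|b|\le x$. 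Your closing paragraph hints at this issue but misdiagnoses it as "routine bookkeeping" of constants inside the existing estimates, whereas what is actually needed (and is the substantive content of the paper's short proof) is a self-contained lemma that the equidistribution of $g$ on $[x]$ implies a comparable equidistribution of $g'$ on $[ax]$.
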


\begin{proof}[Proofs of Theorem~\ref{equidist-1/3} and~\ref{equidist-1/2}]
We deduce Theorem~\ref{equidist-1/3} from~\cite[Theorem 4.3]{shao2020bombierivinogradov}; the deduction of Theorem~\ref{equidist-1/2} from~\cite[Theorem 4.4]{shao2020bombierivinogradov} is completely similar.

Write $\xi(n) = F(g(n)\Gamma)$. One can find a polynomial sequence $g'$ such that $g'(L(n)) = g(n)$, for example, by examining the Taylor coefficients of $g$ in coordinates (see~\cite[Lemma 6.7]{GT-nilsequence}). 

We claim that $\{g'(n)\}_{n \leq ax}$ is totally $\eta^c$-equidistributed for some small constant $c = c(s,\Delta) > 0$. Suppose that this is not the case. Then by the quantitative Kronecker theorem for nilsequences (see~\cite[Theorem 2.9]{GT-nilsequence}),  there is a  nontrivial horizontal character $\chi$ with $\|\chi\| \ll \eta^{-O_{s,\Delta}(c)}$ such that
\[ \|\chi \circ g'\|_{C^{\infty}(ax)} \ll \eta^{-O_{s,\Delta}(c)}. \]
Since $g'$ is a polynomial sequence, we can write
\[ \chi \circ g'(n) = \alpha_0  + \alpha_1 n + \cdots + \alpha_s n^s. \]
Then there is a positive integer $q \ll_s 1$ such that the coefficients satisfy
\[ \|q\alpha_i\| \ll_s (ax)^{-i} \eta^{-O_{s,\Delta}(c)} \]
for each $1 \leq i \leq s$.
Now 
\[ \chi \circ g(n) = \chi \circ g'(an+b) = \sum_{i=0}^s \alpha_i (an+b)^i. \] If we write $\beta_j$ for the coefficient of $n^j$ in $\eta \circ g$, then one can establish that
\[ \|q \beta_j\| \ll_s x^{-j} \eta^{-O_{s,\Delta}(c)}  \]
for each $1 \leq j \leq s$. Hence
\[ \|q\chi \circ g\|_{C^{\infty}(x)} \ll_s \eta^{-O_{s,\Delta}(c)}. \]
It now follows (from~\cite[Lemma 3.6]{shao2020bombierivinogradov}) that $\{g(n)\}_{n \leq x}$ is not totally $\eta^{-O_{s,\Delta}(c)}$-equidistributed, which is a contradiction if $c$ is chosen small enough.

 Let $\xi'(n) = F(g'(n)\Gamma)$. Then $\xi' \in \Xi_s^0(\Delta, \eta^{-\kappa}; \eta^c, ax)$.
After a change of variables, we can write
\[ \sum_{\substack{n\leq x \\ n \equiv c\PMod{d}}} \Lambda(L(n))\xi(n) = \sum_{\substack{L(0) < n \leq L(x) \\ n \equiv L(c)\PMod{ad}}} \Lambda(n) \xi'(n).  \]
It follows that
\[ \sum_{d\leq x^{1/3-\varepsilon}}\max_{(L(c),d)=1} \Big|\sum_{\substack{n \leq x \\ n \equiv c\PMod{d}}} \Lambda(L(n)) \xi(n)\Big| \leq \sum_{d' \leq ax^{1/3-\ee}} \max_{(c',d')=1} \Big| \sum_{\substack{L(0) < n \leq L(x) \\ n \equiv c'\PMod{d'}}} \Lambda(n) \xi'(n) \Big|.
\]
By~\cite[Theorem 4.3]{shao2020bombierivinogradov}, the right-hand side above is $\ll \eta^{c\kappa} ax (\log x)^2$ for some constant $\kappa = \kappa(s,\Delta,\ee) > 0$. The conclusion follows.
\end{proof}

We will also need a few type I and type II estimates appearing in~\cite{shao2020bombierivinogradov}.

\begin{lemma}[Type I Bombieri--Vinogradov estimate]\label{typeI-BV}
Let $x \geq 2$ and $\varepsilon>0$. Let $1\leq M \leq x^{1/2}$ and $1\leq D\leq x^{1/2-\varepsilon}$.  Let $s \geq 1$, $\Delta \geq 2$, and $0 < \delta < 1/2$. Let $L(n) = an+b$ for some $1 \leq a \leq x^{\ee/2}$ and $|b| \leq x$ with $(a,b)=1$.
Let $\xi \in \Xi_s^0(\Delta, \delta^{-1}; \delta^C, x)$ for some sufficiently large constant  $C = C(s,\Delta,\varepsilon)$. Then
\[ \sum_{D\leq d\leq 2D} \max_{c\PMod d}  \sum_{\substack{M \leq m \leq 2M \\ (m,ad)=1}} \Big| \sum_{\substack{mn\leq L(x) \\mn \equiv c\PMod{d}\\mn\equiv b\PMod a}} \xi(L^{-1}(mn)) \Big| \ll \delta x. \]
\end{lemma}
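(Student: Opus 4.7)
The plan is to reparametrize the inner sum as a polynomial nilsequence along an arithmetic progression, split the pairs $(d,m)$ by the size of $Q:=dm$, and then use the quantitative Leibman theorem together with a Weyl-type count to bound the contribution of the pairs where equidistribution of $\xi$ along the relevant progression fails.

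First I would reparametrize the inner sum. Fix $d\in[D,2D]$, $m\in[M,2M]$ with $(m,ad)=1$, and $c\in[d]$. Since $(m,ad)=1$, the two congruences $mn\equiv c\PMod d$ and $mn\equiv b\PMod a$ determine $n$ uniquely modulo $ad$, say $n\equiv n_0\PMod{ad}$ with $n_0=n_0(d,m,c)\in[0,ad)$. Writing $n=n_0+adk$ and $\tilde c:=(mn_0-b)/a\in\Z$, a direct calculation yields $L^{-1}(mn)=dmk+\tilde c$, so that the inner sum equals
\[
T_{d,m,c}:=\sum_{k\in I_{d,m,c}}F(g(dmk+\tilde c)\Gamma),
\]
where $\xi(n)=F(g(n)\Gamma)$ and $I_{d,m,c}$ is an interval of length $K\asymp x/(dm)$.

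Next I would split according to the size of $Q=dm$ relative to a threshold $Q_0:=\delta^{-C_1}$, where $C_1=C_1(s,\Delta,\varepsilon)$ is chosen small enough in terms of $C$. When $Q\le Q_0$, the set $\{dmk+\tilde c:k\in I_{d,m,c}\}\subset[10x]$ is an arithmetic progression of common difference $Q\le Q_0$ and length $K\ge x/Q_0\ge 10\delta^C x$; the total $\delta^C$-equidistribution of $g$ applies directly and yields $|T_{d,m,c}|\ll\delta^C K$, so that the contribution from this regime is at most $\delta^C x(\log x)^2\ll\delta x$. When $Q>Q_0$, the progression is too short for total equidistribution of $g$ itself, and I would instead apply the quantitative Leibman theorem (\cite[Theorem 2.9]{GT-nilsequence}) to the sampled sequence $\tilde g_{d,m,\tilde c}(k):=g(dmk+\tilde c)$ on $[K]$. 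With $\eta':=\delta(\log x)^{-200}$, either $|T_{d,m,c}|\le\eta' K$ (a ``good'' pair) or there is a nontrivial horizontal character $\chi$ with $\|\chi\|\ll\eta'^{-O_{s,\Delta}(1)}$ and $\|\chi\circ\tilde g_{d,m,\tilde c}\|_{C^{\infty}[K]}\ll\eta'^{-O_{s,\Delta}(1)}$ (a ``bad'' pair); the good contribution is $\ll\eta' x(\log x)^2\ll\delta x$ by trivial estimation.

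The main obstacle is the control of the bad pairs, and this is where the hypothesis $\xi\in\Xi_s^0(\Delta,\delta^{-1};\delta^C,x)$ with $C$ large in terms of $s,\Delta,\varepsilon$ is essential. Writing $\chi\circ g(n)=\sum_{j=0}^s\alpha_j n^j$ and letting $j^*$ be the top index with $\alpha_{j^*}\neq 0$, reading off the coefficient of $k^{j^*}$ in $\chi\circ\tilde g_{d,m,\tilde c}$ forces in particular $\|\alpha_{j^*}Q^{j^*}\|_{\R/\Z}\ll\eta'^{-O(1)}(Q/x)^{j^*}$. On the other hand, the contrapositive of quantitative Leibman applied to $g$ itself on $[10x]$ (using total $\delta^C$-equidistribution) forces $\max_{j\ge 1}x^j\|\alpha_j\|_{\R/\Z}\ge\delta^{-c_2 C}$ for an absolute $c_2>0$, so some $\alpha_{j_0}$ with $j_0\ge 1$ is bounded below at scale $\delta^{-c_2 C}x^{-j_0}$. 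A standard Weyl-type count then estimates the number of $Q\in[Q_0,4DM]$ satisfying $\|\alpha_{j^*}Q^{j^*}\|_{\R/\Z}\le\eta'^{-O(1)}(Q/x)^{j^*}$ by roughly $\eta'^{-O(1)}x^{-\varepsilon j^*}\cdot DM$, which is $\ll\delta^{10}DM$ for $x$ large enough and $C$ large enough, using $DM\le x^{1-\varepsilon}$. Summing over the $\delta^{-O(1)}$ admissible characters $\chi$ and using the trivial bound $|T_{d,m,c}|\le K\asymp x/(DM)$ on each bad pair gives a total bad contribution $\ll\delta^{-O(1)}\cdot\delta^{10}DM\cdot x/(DM)\ll\delta x$, completing the proof upon choosing $C$ sufficiently large.
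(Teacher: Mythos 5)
Your proposal takes a genuinely different route from the paper. The paper proves this lemma by \emph{reduction}: it constructs a polynomial sequence $g'$ with $g'(L(n))=g(n)$ and verifies its equidistribution, expands the congruence $mn\equiv b\PMod a$ into Dirichlet characters, combines congruences with CRT (switching to the modulus $[d,a]$), and then cites \cite[Proposition 5.5]{shao2020bombierivinogradov} for the special case $L(n)=n$. You instead try to prove the estimate from scratch, via the quantitative Leibman theorem and a Weyl-type count over the moduli $dm$ — essentially re-deriving that cited proposition. This is a much heavier task than what the paper actually does, and as written the key step is not justified.

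Concretely, there are several gaps. (1) If $(a,d)>1$ the two congruences fix $n$ only modulo $\mathrm{lcm}(a,d)$, not $ad$, so the common difference is $md/(a,d)$, not $dm$ — a small but real error (the paper's CRT step uses $[d,a]$ precisely for this reason). (2) Setting $\eta'=\delta(\log x)^{-200}$ makes the character norm bound $\|\chi\|\ll\eta'^{-O(1)}$ carry unbounded $(\log x)^{O(1)}$ factors; these cannot be dominated uniformly in $\delta\in(0,1/2)$ by the hypothesis that $g$ is $\delta^{C}$-equidistributed for a \emph{fixed} $C=C(s,\Delta,\varepsilon)$, which is what the converse Leibman direction requires. (Taking $\eta'\asymp\delta$ would cure this — the good contribution is really $\ll\eta' x$ with no $\log$ losses — but then the rest of the analysis has to be rechecked.) (3) Most seriously, the equidistribution lower bound $\max_{j\geq 1}x^{j}\|\alpha_{j}\|\gg\delta^{-cC}$ is attained at some index $j_{0}$, which need not equal the top index $j^{*}$ whose coefficient $\alpha_{j^{*}}Q^{j^{*}}$ you extract from $\|\chi\circ\tilde g\|_{C^{\infty}[K]}$; without Diophantine control of $\alpha_{j^{*}}$ itself, the claimed Weyl count $\ll\eta'^{-O(1)}x^{-\varepsilon j^{*}}DM$ is unjustified — if $\alpha_{j^{*}}$ is near a rational with small denominator, essentially every $Q$ can pass the top-coefficient test. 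Handling this requires descending through all coefficients (as in Vinogradov-type lemmas for polynomial sequences), which is exactly the technical content of \cite[Proposition 5.5]{shao2020bombierivinogradov}. (4) Finally, your count is of bad values $Q=dm$, but several $(d,m)\in[D,2D]\times[M,2M]$ can share the same $Q$, up to a divisor-function multiplicity; passing from "bad $Q$" to "bad pairs" loses a factor that is not addressed.
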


\begin{proof}
The case $L(n)=n$ is \cite[Proposition 5.5]{shao2020bombierivinogradov}. We shall quickly reduce the general case to this case. 

From the arguments in the proof of Theorems~\ref{equidist-1/3} and~\ref{equidist-1/2}, there exists a nilsequence $\xi' \in \Xi_s^0(\Delta, \delta^{-1}; \delta^{C'}, ax)$ for some large constant $C' = C'(s,\Delta,\ee)$, such that $\xi'(n) = \xi(L^{-1}(n))$ if $n \equiv b\PMod{a}$. Then use the identity
\[ 1_{mn \equiv b\PMod{a}} = \frac{1}{\varphi(a)} \sum_{\chi\PMod{a}} \chi(m)\chi(n) \overline{\chi(b)} \]
to reduce matters to 
\[ \sum_{D\leq d\leq 2D}\max_{c\PMod d}\sum_{\substack{M\leq m\leq 2M\\(m,ad)=1}}\Big|   \sum_{\substack{mn\leq L(x) \\ mn \equiv c\PMod{d}}} \chi(mn)\xi'(mn)\Big|\ll \delta x  \]
for characters $\chi\pmod a$. Splitting $mn$ into residue classes $\PMod a$, it suffices to show for all $u$ coprime to $a$ that
\[ \sum_{D\leq d\leq 2D}\max_{c\PMod d}\sum_{\substack{M\leq m\leq 2M\\(m,ad)=1}}\Big|   \sum_{\substack{mn\leq L(x) \\ mn \equiv c\PMod{d}\\mn\equiv u\PMod a}} \xi'(mn)\Big|\ll \delta x  \]
Now, applying the Chinese remainder theorem to combine the congruences on $mn$, and making the change of variables $d'=[d,a]\leq 2aD$, the conclusion then follows from the case $L(n) = n$ that was already established.
\end{proof}

\begin{lemma}[Well-factorable type II Bombieri--Vinogradov estimate]\label{typeII-wellfactorable}
Let $\ee > 0$ be a small constant.
Let $x \geq 2$ and $M \in [x^{1/4}, x^{3/4}]$ be large and let $c \neq 0,k$ be fixed integers. Suppose that either
\begin{enumerate}[label=\upshape(\roman*)]
    \item $\lambda$ is well-factorable of level $x^{1/2-\varepsilon}$, or
    \item $\lambda= 1_{p \in [P,P')}*\lambda'$, where $\lambda'$ is well-factorable of level $x^{1/2-\varepsilon}/P$ and $2P\geq P'\geq P\in [x^{1/10},x^{1/3-\varepsilon}]$.
\end{enumerate} Let $s \geq 1$, $\Delta \geq 2$, $0 < \delta < 1/2$. Let $L(n) = an+b$ for some $1 \leq a \leq x^{\ee/2}$ and $|b| \leq x$ with $(a,b)=1$. Let $\xi \in \Xi_s^0(\Delta,\delta^{-1};\delta^C,x)$ for some sufficiently large constant $C = C(s,\Delta,\ee)$. Then
\begin{equation*}
\Big|\sum_{\substack{d \leq x^{1/2-\varepsilon} \\ (d,ac)=1}}  \lambda_d \sum_{\substack{L(x)\leq mn\leq L(2x) \\ M\leq m\leq 2M \\ mn \equiv c\PMod{d}\\ mn \equiv b\PMod{a}}} \alpha(m)\beta(n)\xi(L^{-1}(mn))\Big|  \ll \delta ax (\log x)^{O_k(1)},
\end{equation*}
uniformly for sequences $\{\alpha(n)\}$ and $\{\beta(n)\}$ satisfying $|\alpha(n)|, |\beta(n)| \leq d_k(n)$.
\end{lemma}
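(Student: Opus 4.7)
The plan is to reduce the claim to the analogue in which $L$ is replaced by the identity, a result already established in \cite{shao2020bombierivinogradov}, by the same change-of-variables and character-expansion device used to prove Lemma~\ref{typeI-BV}.

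First I would construct an auxiliary nilsequence on the interval $[1,ax]$. Writing $\xi(n) = F(g(n)\Gamma)$, I would produce a polynomial sequence $g' : \Z \to G$ with $g'(L(n)) = g(n)$ (cf.\ \cite[Lemma~6.7]{GT-nilsequence}), and then invoke the same Kronecker-type argument used in the proofs of Theorems~\ref{equidist-1/3} and~\ref{equidist-1/2} to show that $\xi'(N) := F(g'(N)\Gamma)$ belongs to $\Xi_s^0(\Delta,\delta^{-1};\delta^{C'},ax)$ for some $C' = C'(s,\Delta,\varepsilon)$ that can be made as large as desired by enlarging the $C$ in the hypothesis. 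By construction $\xi'(mn) = \xi(L^{-1}(mn))$ whenever $mn \equiv b \PMod{a}$, which is automatic along the summation.

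Next I would use Dirichlet characters modulo $a$ to remove the congruence on $mn$: since $(a,b)=1$,
\[
1_{mn \equiv b \PMod a} \;=\; \frac{1}{\varphi(a)} \sum_{\chi \PMod a} \chi(mn)\,\overline{\chi(b)},
\]
the identity being valid when $(mn,a)=1$, which is enforced by the other conditions. Inserting this decomposes the target sum into $\varphi(a)$ pieces of the shape
\[
\sum_{\substack{d \leq x^{1/2-\varepsilon}\\(d,c)=1}} \lambda_d \sum_{\substack{L(x) \leq mn \leq L(2x)\\ M \leq m \leq 2M\\ mn \equiv c\PMod d}} (\alpha\chi)(m)\,(\beta\chi)(n)\,\xi'(mn),
\]
where the twisted sequences $\alpha\chi$ and $\beta\chi$ still obey the divisor bound $|(\alpha\chi)(n)|,|(\beta\chi)(n)| \leq d_k(n)$. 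The coprimality $(d,a)=1$ built into $(d,ac)=1$ can be dropped at negligible cost, since the larger coprimality $(d,c)=1$ is already enforced.

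Finally, since $a \leq x^{\varepsilon/2}$, the variable $N := mn$ ranges in an interval of length $\asymp ax =: X$, and the level obeys $x^{1/2-\varepsilon} \leq X^{1/2-\varepsilon/2}$; accordingly, the well-factorability hypothesis of case (i) (resp.\ the convolution structure $\lambda = 1_{p \in [P,P')} * \lambda'$ of case (ii)) is preserved relative to the new variable size $X$. Each individual character sum therefore falls under the $L(n)=n$ version of the lemma established in \cite{shao2020bombierivinogradov}, applied with the equidistributed nilsequence $\xi'$ on $[1,X]$ and with twisted sequences of bounded divisor size; this yields a bound of $\ll \delta \cdot ax (\log x)^{O_k(1)}$ per character. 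Summing over the $\varphi(a)$ characters and dividing by $\varphi(a)$ produces the claim. The only genuine technical work is propagating total equidistribution from $g$ to $g'$, which is exactly the Kronecker-type computation already recorded in the proofs of Theorems~\ref{equidist-1/3}--\ref{equidist-1/2}; the main difficulty, namely the $L = \mathrm{id}$ case, is taken as an input from the cited paper.
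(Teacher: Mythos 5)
Your proposal follows essentially the same route as the paper: produce an auxiliary polynomial sequence $g'$ with $g'(L(n))=g(n)$, propagate total equidistribution from $g$ to $g'$ by the Kronecker-type argument, expand the congruence $mn\equiv b\PMod a$ via Dirichlet characters, and reduce to the $L(n)=n$ case cited from \cite{shao2020bombierivinogradov}. There is, however, one step you gloss over. What is actually available from \cite[Proposition 6.6]{shao2020bombierivinogradov} is not a ``well-factorable type II estimate for $L(n)=n$'' in the form of Lemma~\ref{typeII-wellfactorable}; it is the range-localized version in which the modulus $d$ is forced to be $r_1r_2$ with $r_1\in[R_1,2R_1]$, $r_2\in[R_2,2R_2]$, subject to the constraints $R_1\leq x^{1-\varepsilon}/M$, $R_1R_2\leq x^{1/2-\varepsilon}$, $R_1R_2^2\leq Mx^{-\varepsilon}$. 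This is precisely what the paper isolates as Lemma~\ref{typeII-partially-wellfactorable} and then uses to deduce Lemma~\ref{typeII-wellfactorable}. The passage from ``$\lambda$ well-factorable (or of the form $1_{p\in[P,P')}*\lambda'$)'' to the localized $(r_1,r_2)$-decomposition needs to be carried out: one factors $\lambda=\gamma*\theta$ with $\gamma$ supported on $[1,2R_1]$, $\theta$ on $[1,2R_2]$ for the optimal $R_1=x^{1-\varepsilon}/M$, $R_2=Mx^{-1/2}$, and dyadically decomposes; in case (ii) one must additionally observe that $P\leq R_1$ so that the prime indicator can be absorbed into the $\gamma$ factor. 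This step is routine but not free, and your proof as written treats the cited result as stronger than it is. You also omit the paper's preliminary normalization that, after possibly switching $m$ and $n$, one may assume $M\in[x^{1/2},x^{3/4}]$, which is needed to put the type II ranges in the form expected by the cited proposition. None of this affects the overall correctness of the strategy, but the deduction of the well-factorable statement from the range-localized one should be spelled out.
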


This is a consequence of the following somewhat more general statement.

\begin{lemma}\label{typeII-partially-wellfactorable}
Let $\ee > 0$ be a small constant.
Let $x \geq 2$ and $M \in [x^{1/4}, x^{3/4}]$ be large and let $c \neq 0,k$ be fixed integers. Let $R_1,R_2\geq 1$ be such that $R_1\leq x^{1-\varepsilon}/M$, $R_1R_2\leq x^{1/2-\varepsilon}$ and $R_1R_2^2\leq Mx^{-\varepsilon}$. Let $s \geq 1$, $\Delta \geq 2$, $0 < \delta < 1/2$. Let $L(n) = an+b$ for some $1 \leq a \leq x^{\ee/2}$ and $|b| \leq x$ with $(a,b)=1$. Let $\xi \in \Xi_s^0(\Delta,\delta^{-1};\delta^C,x)$ for some sufficiently large constant $C = C(s,\Delta,\ee)$. Then
\begin{equation*}
\sum_{\substack{R_1\leq r_1\leq 2R_1\\R_2\leq r_2\leq 2R_2 \\ (r_1r_2,ac)=1}}  \Big| \sum_{\substack{L(x)\leq mn\leq L(2x) \\ M\leq m\leq 2M \\ mn \equiv c\PMod{r_1r_2}\\ mn \equiv b\PMod{a}}} \alpha(m)\beta(n)\xi(L^{-1}(mn))\Big|  \ll \delta ax (\log x)^{O_k(1)},
\end{equation*}
uniformly for sequences $\{\alpha(n)\}$ and $\{\beta(n)\}$ satisfying $|\alpha(n)|, |\beta(n)| \leq d_k(n)$.
\end{lemma}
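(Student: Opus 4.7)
The plan is to adapt the standard Type II Bombieri--Vinogradov argument to the nilsequence setting, using as black boxes the equidistribution-in-APs inputs from \cite{shao2020bombierivinogradov} that underlie Theorems~\ref{equidist-1/3}--\ref{equidist-1/2}. First, as in the proof of Lemma~\ref{typeI-BV}, I would reduce to the case $L(n)=n$: the condition $mn\equiv b\PMod a$ is opened via orthogonality of Dirichlet characters $\chi\PMod a$, followed by a splitting of $mn$ into residue classes modulo $a$ and the substitution $n\mapsto L^{-1}(n)$, which produces a shifted nilsequence $\xi'$ on the dilated range $[1, L(2x)]$ that is still totally $\delta^{\Theta(C)}$-equidistributed. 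After these standard preliminaries, at the cost of $(\log x)^{O_k(1)}$ factors, the task reduces to bounding
\begin{equation*}
\sum_{\substack{R_1\leq r_1\leq 2R_1\\ R_2\leq r_2\leq 2R_2\\ (r_1 r_2,c)=1}} \Big|\sum_{\substack{L(x)\leq mn\leq L(2x)\\ M\leq m\leq 2M\\ mn\equiv c\PMod{r_1 r_2}}} \alpha(m)\beta(n)\xi'(mn)\Big|.
\end{equation*}

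Next I would insert phases $\eta(r_1,r_2)\in\mathbb{C}$ of modulus one to remove the absolute value, and absorb $r_1$ into $m$ via $\tilde m := m r_1$; the hypothesis $R_1\leq x^{1-\varepsilon}/M$ ensures $\tilde m$ lies in a range of length $O(x^{1-\varepsilon})$, so the single remaining modulus is $r_2\in[R_2,2R_2]$. I would then apply the Cauchy--Schwarz inequality in the variable $\tilde m$, using the divisor bound to control $\sum_{\tilde m}|\alpha(\tilde m/r_1)|^2\ll MR_1(\log x)^{O_k(1)}$. Expanding the square and interchanging summation reduces matters to estimating, for each quadruple $(n,n',r_2,r_2')$, an inner sum of the shape
\begin{equation*}
\sum_{\substack{\tilde m \\ \tilde m(n-n')\equiv 0\PMod{[r_2,r_2']}\\ \tilde m n\equiv c\PMod{r_2},\ \tilde m n'\equiv c\PMod{r_2'}}} \xi'(\tilde m n)\overline{\xi'(\tilde m n')}.
\end{equation*}
The diagonal contribution $n=n'$ would be handled trivially, and is acceptable precisely because of the balancing assumption $R_1 R_2^2 \leq M x^{-\varepsilon}$, which ensures $R_1 R_2\cdot R_2/M\leq x^{-\varepsilon}$.

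The crux will be the off-diagonal terms $n\neq n'$, where cancellation must be extracted from the inner sum. The product $\tilde m\mapsto \xi'(\tilde m n)\overline{\xi'(\tilde m n')}$ is a nilsequence on the product nilmanifold $(G\times G)/(\Gamma\times\Gamma)$, attached to the polynomial orbit $\tilde m\mapsto(g(\tilde m n),g(\tilde m n'))$. A standard horizontal character analysis, as carried out in \cite[Section 5]{shao2020bombierivinogradov}, shows that failure of total equidistribution of this orbit along the progression modulo $[r_2,r_2']$ forces a nontrivial Diophantine relation on $(n,n')$ with $(\log x)^{O(1)}$-bounded coefficients, which holds only for a negligible fraction of pairs. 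Summing the resulting saving over the allowed $(n,n',r_2,r_2')$, invoking the constraint $R_1 R_2\leq x^{1/2-\varepsilon}$ to keep the moduli in the Bombieri--Vinogradov range, and then taking square roots to undo the Cauchy--Schwarz, yields the desired estimate. The main obstacle will be tracking the equidistribution parameters $(\Delta, K, \eta)$ of this product nilsequence uniformly in $(n,n',r_2,r_2')$ and verifying that the resulting savings dominate the $(\log x)^{O_k(1)}$ losses; most of this work is already done inside~\cite[Proof of Theorem 4.4]{shao2020bombierivinogradov}, and the present statement should follow by repeating that argument with $r=r_1 r_2$ in place of a general modulus.
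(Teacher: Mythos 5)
Your overall plan matches the paper's: reduce the general $L$ to the case $L(n)=n$ by the same mechanism used for Lemma~\ref{typeI-BV} (opening the congruence $mn\equiv b\PMod{a}$ via Dirichlet characters, applying CRT, and constructing a shifted nilsequence $\xi'(n)=\xi(L^{-1}(n))$ on $[1,L(2x)]$ that remains suitably equidistributed), and then rely on the Type II machinery of \cite{shao2020bombierivinogradov}. The key difference is that for the case $L(n)=n$ the paper does \emph{not} re-derive anything: it first observes that by symmetry in $m$ and $n$ one may assume $M\in[x^{1/2},x^{3/4}]$, and then cites \cite[Proposition 6.6]{shao2020bombierivinogradov}, which is precisely this lemma with $L(n)=n$. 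You instead attempt to re-prove that proposition from scratch, which is both unnecessary and, as sketched, imprecise.

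Two concrete issues with your re-derivation. First, you omit the normalization $M\in[x^{1/2},x^{3/4}]$ via swapping $m$ and $n$; the hypotheses on $R_1,R_2$ are not symmetric in $M\leftrightarrow x/M$, so the direction in which one applies Cauchy--Schwarz matters, and the symmetry step lets one land in the convenient regime before invoking the reference. Second, the step ``absorb $r_1$ into $m$ via $\tilde m:=mr_1$, so the single remaining modulus is $r_2$'' does not work as stated: both the congruence $mn\equiv c\PMod{r_1 r_2}$ and the argument $mn$ of $\xi'$ depend on the \emph{factorization} of $\tilde m$ and not merely on $\tilde m$ itself, so after Cauchy--Schwarz in $\tilde m$ you cannot pass to the inner bilinear form $\sum_{\tilde m}\xi'(\tilde m n)\overline{\xi'(\tilde m n')}$ that you write down (the actual argument is $\xi'((\tilde m/r_1)n)$, and $r_1\mid\tilde m$ is an extra constraint). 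The genuine dispersion argument behind \cite[Proposition 6.6]{shao2020bombierivinogradov} splits the congruence by CRT modulo $r_1$ and $r_2$ separately and handles $r_1$ more carefully; this is exactly why the paper quotes it as a black box rather than recapitulating it. In short: keep your reduction step, add the $M\leftrightarrow x/M$ symmetry normalization, and for the $L(n)=n$ core simply invoke \cite[Proposition 6.6]{shao2020bombierivinogradov}.
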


To see that Lemma \ref{typeII-wellfactorable} follows from Lemma \ref{typeII-partially-wellfactorable},  it suffices to show that the well-factorable sequence $\lambda$ can be decomposed into convolutions of the form $\gamma * \theta$, where the sequences $\gamma$ and $\theta$ are $1$-bounded sequences supported on $[1, 2R_1]$ and $[1, 2R_2]$, respectively, with $R_1 = x^{1-\ee}/M$ and $R_2 = Mx^{-1/2}$.
This is evidently true in case (i) of Lemma \ref{typeII-wellfactorable} since $\lambda$ is well-factorable.  In case (ii), since $P \leq R_1$, we can write $\lambda' = \lambda_1*\lambda_2$ for some sequences $\lambda_1,\lambda_2$ supported on $[1,R_1/P]$ and $[1,R_2]$, respectively. Then we can take $\gamma = 1_{p \in [P,P')}*\lambda_1$ and $\theta = \lambda_2$.

\begin{proof}[Proof of Lemma \ref{typeII-partially-wellfactorable}]
By switching the roles of $m$ and $n$ if necessary, we may assume that $M \in [x^{1/2}, x^{3/4}]$ The case $L(n) = n$ follows from \cite[Proposition 6.6]{shao2020bombierivinogradov}. The reduction of the general case to this case is very similar to the corresponding reduction in the proof of Lemma \ref{typeI-BV}.
\end{proof}



In the special case when $\lambda_d = 1_{d=1}$, Lemma~\ref{typeII-wellfactorable} implies that
\begin{equation}\label{eq:typeII-special}
\Big|\sum_{\substack{mn\leq L(x)\\M\leq m\leq 2M\\ mn \equiv b\PMod{a}}}\alpha(m)\beta(n)\xi(L^{-1}(mn))\Big|\ll_{k} \delta ax(\log x)^{O_{k}(1)}. 
\end{equation}
In the case $L(n) = n$, this is also the type II information required in Green and Tao's proof \cite[Section 3]{GT-mobius-nil} that the M\"obius function is orthogonal to nilsequences.


\section{Dealing with the equidistributed case}\label{sec:eq}


The goal of this section is to prove Propositions~\ref{prop:twin-prime-eq-nil} and~\ref{prop:chen-prime-eq-nil}. We shall apply the sieve lemmas in Section~\ref{sec:sieve} to reduce matters to certain Bombieri--Vinogradov type equidistribution results about primes weighted by nilsequences in arithmetic progressions, which follow from results in Section~\ref{sec:BV}.

\subsection{Proof of Proposition~\ref{prop:twin-prime-eq-nil}}

We may assume that $\varepsilon>0$ is fixed, since $x$ is large enough in terms of $\varepsilon$. In what follows, let $B$ be a large enough constant depending on $m,d,\Delta$. We may assume that $C$ is large enough in terms of $B$.
Recall Definition \ref{def:PsiDeltaKeta} and the notation from that definition, thus $\xi(n)=F(g(n)\Gamma)$, where $G/\Gamma$ is a nilmanifold equipped with a filtration of degree at most $d$, etc.
 Let $\mu=\int_{G/\Gamma}F$, so that the $\eta$-equidistribution of $\{g(n)\}_{n\leq x}$ implies
\begin{align}\label{equ3}
\Big|\sum_{n\leq x} (\xi(n)-\mu)\Big|\ll x/(\log x)^{B}. 
\end{align}
We may assume that $\mu\geq \varepsilon/2$, since otherwise \eqref{equ7} is trivial.

We will apply Maynard's sieve method in the form of Proposition \ref{prop: maynard}. We need to verify hypotheses (i)--(iv) there for the sequence $\omega_n=\xi(n)$ (with $\delta=1/2$ and $\theta=1/10$, say), and then the claim follows. 

Hypothesis (i) (with $\delta=1/2$ in its statement) asserts that
\begin{align*}
 \frac{\varphi(a_i)}{a_i} \sum_{\substack{n\leq x\\ L_i(n)\in \mathbb{P}}}\xi(n)\geq \frac{1}{2(\log x)}\sum_{n\leq x}\xi(n).   
\end{align*}
 Note that $\xi':= \xi-\mu$ is an equidistributed nilsequence lying in $\Xi_d^0(\Delta,K; \eta, x)$. By partial summation, we have
\begin{align}\label{equ2}
\Big|\sum_{\substack{n\leq x\\ L_i(n)\in \mathbb{P}}}\xi'(n)\Big|\ll\sup_{2\leq y\leq x}\frac{1}{\log L_i(y)}\Big|\sum_{n \leq y}\Lambda(L_i(n))\xi'(n)\Big|+O(x^{1/2}).    
\end{align}
we may apply (the $d=1$ case of) Theorem~\ref{equidist-1/3} to  bound the right-hand side of \eqref{equ2} by $\ll \eta^{\kappa}  x (\log x)^{A+10}$ for some constant $\kappa = \kappa(d,\Delta) > 0$, which can be made $\ll x(\log x)^{-B}$ by our assumption on $\eta$.  
Hypothesis (i) now follows from the prime number theorem and \eqref{equ3}.

We turn to hypothesis (ii), which (taking $\theta=1/10$ there) states that 
\begin{align}\label{equ4}
\sum_{r\leq x^{1/10}}\max_{c\PMod{r}}\Big|\sum_{\substack{n\leq x\\n\equiv c\PMod r}}\xi(n)-\frac{1}{r}\sum_{n\leq x}\xi(n)\Big|\ll x/(\log x)^{B}.    
\end{align}
We may clearly replace $\xi(n)$ by $\xi'(n)=\xi(n)-\mu$ here; the new nilsequence $\xi'$ lies in $\Xi_d^0(\Delta, K; \eta, x)$. Recalling \eqref{equ3}, our task is to show that
\begin{align*}
\sum_{r\leq x^{1/10}}\max_{c\PMod r}\Big|\sum_{\substack{n\leq x\\n\equiv c\PMod r}}\xi'(n)\Big|\ll x/(\log x)^{B}.    
\end{align*}
But this follows from Lemma \ref{typeI-BV} with $M = 1$ and $L(n)=n$.


Next we consider hypothesis (iii), which (with $\theta=1/10$) states that
\begin{align}\label{equ6}
\sum_{r\leq x^{1/10}}\max_{(L_i(c),r)=1}\Big|\sum_{\substack{n\leq x\\n\equiv c\PMod r\\ L_i(n) \in \mathbb{P}}}\xi(n)-\frac{\varphi(a_i)}{\varphi(a_ir)}\sum_{\substack{n\leq x\\ L_i(n)\in \mathbb{P}}}\xi(n)\Big|\ll x/(\log x)^{B}.
\end{align}
Applying the Bombieri--Vinogradov theorem, we may replace $\xi(n)$ by $\xi'(n)=\xi(n)-\mu$ on the left-hand side of \eqref{equ6}. By the argument we used to verify hypothesis (i), we have \begin{align}\label{equ16}
\Big|\sum_{\substack{n\leq x\\L_i(n)\in \mathbb{P}}}\xi'(n)\Big|\ll x/(\log x)^{B+1}.
\end{align}
Hence, by partial summation, \eqref{equ6} reduces to 
\begin{align}\label{equ24}
 \sum_{r\leq x^{1/10}}\max_{(L_i(c),r)=1}\Big|\sum_{\substack{n\leq y\\n\equiv c\PMod r}}\Lambda(L_i(n))\xi'(n)\Big|\ll x/(\log x)^{B}
\end{align}
for $y\in [x(\log x)^{-10B},x]$. This last claim follows from Theorem~\ref{equidist-1/3}.

Finally, hypothesis (iv) states that
\begin{align*}
\max_{c\PMod r}\sum_{\substack{n\leq x\\n\equiv c\PMod r}}\xi(n)\ll \frac{1}{r}\sum_{n\leq x}\xi(n).    
\end{align*}
However, this is trivial, since the left-hand side is $O(x/r)$ and the right-hand side is $\geq \varepsilon x/r$ by the consideration at the beginning of the proof and the fact that $\varepsilon>0$ is fixed.

This concludes the proof of Proposition \ref{prop:twin-prime-eq-nil}.

\subsection{Proof of Proposition~\ref{prop:chen-prime-eq-nil}}

We now turn to Chen primes.
 Let $\mu=\int_{G/\Gamma}F$. Similarly as in the proof of Proposition \ref{prop:twin-prime-eq-nil}, we may assume that $\mu\geq \varepsilon/2$, and we have \eqref{equ3}. 

We apply a weighted version of Chen's sieve from  Proposition \ref{prop: chen}. We see from it that the claim follows once we verify hypotheses (i)--(iii) there. 

Hypothesis (i) states that
\begin{align}\label{equ9}
\Big|\sum_{\substack{r\leq x^{1/2-\varepsilon'}\\(r,2a)=1}}\lambda(r)\Big(\sum_{\substack{n\leq x\\ L_2(n)\equiv 0\PMod r\\ L_1(n)\in \mathbb{P}}}\xi(n)-\frac{a}{\varphi(ar)}\sum_{\substack{n\leq x}}\frac{\xi(n)}{\log L_1(n)}\Big)\Big|\ll x/(\log x)^{10}    
\end{align}
for some small enough constant $\varepsilon'>0$, with $\lambda(r)$ either well-factorable of level $x^{1/2-\varepsilon'}$ or a convolution of the shape $1_{p\in [P,P')}*\lambda'$, with $\lambda'$ a well-factorable function of level $x^{1/2-\varepsilon'}/P $ and $2P \geq P'\geq P\in [x^{1/10},x^{1/3-\varepsilon'}]$. Note first that by the Bombieri--Vinogradov theorem we may replace $\xi$ with $\xi'=\xi-\mu$ on the left-hand side of \eqref{equ9} up to negligible error. Note also that 
$$\Big|\sum_{n\leq x} \frac{\xi'(n)}{\log L_1(n)} \Big|\ll x/(\log x)^B$$ 
by \eqref{equ3} and partial summation. Applying partial summation to replace $1_{\mathbb{P}}(L_1(n))$ with the von Mangoldt function, we are left with showing
\begin{align*}
\Big|\sum_{\substack{r\leq x^{1/2-\varepsilon'}\\(r,2a)=1}}\lambda(r)\sum_{\substack{n\leq y\\ L_2(n)\equiv 0\PMod r}}\Lambda(L_1(n))\xi'(n)\Big|\ll x/(\log x)^{100}    
\end{align*}
for all $y\in [x/(\log x)^{101},x]$. Since $L_2(n) = L_1(n) + 2$, the condition $L_2(n) \equiv 0\PMod{r}$ is equivalent to $L_1(n) \equiv -2\PMod{r}$. So this follows from Theorem~\ref{equidist-1/2}.


The statement of hypothesis (ii) is that, for $j\in\{1,2\}$, we have 
\begin{align}\label{equ10}
\Big|\sum_{\substack{r\leq x^{1/2-\varepsilon'}\\ (r,2a) = 1}}\lambda(r)\Big(\sum_{\substack{n\leq x\\ L_1(n)\equiv 0\PMod r\\ L_2(n)\in B_j}}\xi(n)-\frac{\varphi(a)}{\varphi(ar)}\sum_{\substack{n\leq x\\ L_2(n)\in B_j}}\xi(n)\Big)\Big|\ll x/(\log x)^{10}, 
\end{align}
where $\lambda(r)$ is as in hypothesis (i) and 
\begin{align*}
B_1&=\{p_1p_2p_3:\,\, x^{1/10} \leq  p_1\leq x^{1/3-\varepsilon'},\,\, x^{1/3-\varepsilon'}\leq p_2\leq (2x/p_1)^{1/2},\,\, p_3 \geq x^{1/10}\},\\
B_2&=\{p_1p_2p_3:\,\, x^{1/3-\varepsilon'} \leq p_1 \leq p_2\leq (2x/p_1)^{1/2},\,\, p_3\geq x^{1/10}\}.
\end{align*}
First note that $1_{n\in B_j}$ splits into a sum of $(\log x)^{10}$ type II convolutions $\alpha*\beta(n)$, where $|\alpha(n)|, |\beta(n)|\leq 1$ and $\alpha$ is supported on an interval $[M,2M]\subset [x^{1/3-\varepsilon'},x^{1/2}]$. Now, we decompose $\xi(n)=\xi'(n)+\mu$ and note that the contribution of the $\mu$ term to \eqref{equ10} is $\ll x/(\log x)^{B}$ by a type II Bombieri--Vinogradov estimate \cite[Theorem 17.4]{IwKo04} and the previous observation about $1_{n\in B_j}$ being a sum of type II convolutions. Now we shall prove that 
\begin{align}\label{equ12}
\Big|\sum_{\substack{n\leq x\\ L_2(n)\in B_j}}\xi'(n)\Big|\ll x/(\log x)^{B}.    
\end{align}
Again by the fact that $1_{L_2(n)\in B_j}$ is of type II, it suffices to prove after a change of variables that
\begin{align*}
\Big|\sum_{\substack{mn\leq L_2(x) \\ mn \equiv L_2(0)\PMod{a}}} \alpha(m)\beta(n)\xi'(L_2^{-1}(mn))\Big|\ll x/(\log x)^{2B}
\end{align*}
for any $|\alpha(n)|, |\beta(n)|\leq 1$, where $\alpha(n)$ is supported on an interval $[M,2M]\subset [x^{1/3-\varepsilon'},x^{1/2}]$. But this estimate follows from~\eqref{eq:typeII-special} as a special case of Lemma~\ref{typeII-wellfactorable}. Now we have reduced \eqref{equ10} to proving 
\begin{align}\label{equ11}
\Big|\sum_{\substack{r\leq x^{1/2-\varepsilon'}\\ (r,2a) = 1}}\lambda(r)\sum_{\substack{n\leq x\\ L_1(n)\equiv 0\PMod r\\ L_2(n)\in B_j}}\xi'(n)\Big|\ll x/(\log x)^{10}.
\end{align}
The condition $L_1(n) \equiv 0 \PMod{r}$ above is equivalent to $L_2(n) \equiv 2\PMod{r}$.
Once again recalling the type II nature of $1_{L_2(n)\in B_j}$ and using the well-factorable type II estimate of Lemma \ref{typeII-wellfactorable}, we obtain \eqref{equ11}.

We are left with hypothesis (iii), which states that
\begin{align}\label{equ8}
\sum_{\substack{n\leq x\\ L_2(n)\in B_j}}\xi(n)\leq (1+\varepsilon')  \frac{|B_j\cap [1,L_2(x)]|}{\varphi(a) x}\sum_{n\leq x}\xi(n)  
\end{align}
for $j\in \{1,2\}$ and for $\varepsilon'>0$ a small enough constant. 
This claim follows simply by decomposing $\xi(n)=\xi'(n)+\mu$ and using \eqref{equ3},~\eqref{equ12}, and the prime number theorem in arithmetic progressions.

All the hypotheses have now been verified, so the proposition follows.

\section{Proof of the main theorem}\label{sec: mainthm}

We now present the proof of our main theorem by combining the work in the previous sections.

\begin{proof}[Proof of Theorem \ref{th:linEqTwinPrimes}] 
We seek to apply Proposition \ref{prop: wtrick} to the functions $\theta=\theta_1$ (in which case 
$\mathcal{H}=\mathcal{H}_1=\{0,2\},r=2$) and $\theta=\theta_2$ (in which case $\mathcal{H}=\mathcal{H}_2$ is the tuple fixed when we defined $\theta_2$ and $r=m$).
To apply this theorem, we need to establish \eqref{eq:Wtricked}. 

Thus let $\Psi : \Z^d\rightarrow\Z^t$ be a system of finite complexity
whose linear coefficients are bounded in modulus by some constant $L$. 
Recall that by an easy linear algebraic argument, we may assume that $\Psi$ is in $s$-normal form
for some $s$.
Also suppose that $\mathcal{H}\subset [0,L]$.
Let $x\geq 1$ and $K\subset [-x,x]^d$ be a convex body such that  $\Psi(K)\subset [1,x]^t$ and $\Vol(K)\geq \eta x^d$ for some constant $\eta >0$.
Let $w\geq 1$ be chosen later (sufficiently large in terms of $d,t,L$) and let $W=\prod_{p\leq w}p$.
Let $(b_1,\ldots,b_t)\in B_\mathcal{H}^t$.
It suffices to prove that
\begin{equation}
\label{eq:hypoOfWtricked}
\sum_{\mathbf{n}\in\Z^d\cap K}\prod_{i\in [t]}\theta_{j,W,b_i}(\psi_i(\mathbf{n}))\gg_{d,t,L,\eta}
\Vol(K)
\end{equation}
for $j\in \{1,2\}$, where, recalling \eqref{eq: theta}, $\theta_{j,W,b}$ is defined as $\theta'_{W,b}$ for $\theta'=\theta_j$.
We will prove \eqref{eq:hypoOfWtricked} by appealing to Theorem \ref{th:transference}.
Let $M=M(d,t,L)$ be the constant produced by this theorem, and suppose that $x$ is large enough and 
$\alpha$ small enough as in this theorem.
Without loss of generality (upon using Bertrand's postulate, dilating $x$ by a factor of at most 8 and shrinking $\eta$ by a factor at most $8^d$),
we may assume that $x$ is prime and $K\subset [-x/4,x/4]^d$. 
By Proposition \ref{prop: pseudo}, there exists $c \in (0,1)$ and $C>0$ depending only on $d,t,L,M$
(and therefore ultimately on $(d,t,L)$ only) 
and an $(M,\alpha)$-pseudorandom measure $\nu_\mathbf{b} :\Z/x\Z\rightarrow\R_{\geq 0}$
such that $\theta_{j,W,b_i}(n)\leq C\nu(n)$ whenever $i\in [t]$ and $n\in [x^c,x]$.
Define then $\lambda_i : \Z/x\Z\rightarrow \R_{\geq 0}$ by $\lambda_i=\theta_{j,W,b_i}1_{[x^c,x]}/C$ where
as usual we identify $[x]$ and $\Z/x\Z$ in the natural way.
Therefore we have $\lambda_i\leq \nu$ on $\Z/x\Z$ by construction, so hypothesis (i) of Theorem \ref{th:transference} is satisfied.

We now turn to hypothesis (ii).
Let $\delta_1=\delta_0/(3C)$ where $\delta_0$ is the absolute constant of Proposition \ref{prop:chen-prime-nil} and $\delta_2$ be  the implied constant of Proposition \ref{prop:twin-prime-nil},
for our choice of $m$, divided by $3C$. 
Therefore $\delta_j$ depends at most on $d,t,L$ for each $j\in[t]$.
Let $Y_j,\varepsilon_j$ be the corresponding constants given by Theorem \ref{th:transference}, which are functions of $d,t,L,\delta_j$ for $j\in \{1,2\}$.
Fix $i\in [t]$.
For $j\in \{1,2\}$, denote by $f_j$ the function $\lambda_i$ constructed above from the function
$\theta=\theta_j$.
We intend to show that
\begin{equation}
\label{eq:sumnx}
\sum_{n\leq x}f_j(n)\xi(n)\geq \delta_j \sum_{n\leq x}\xi(n)
\end{equation} 
whenever $\xi : \Z\rightarrow [0,1]$ is a nilsequence of complexity at most $Y_j$ satisfying $\sum_{n\leq x}\xi(n)\geq \varepsilon_j x$.
Since $\sum_{n\leq x^c}\theta_{W,b}(n)\ll x^{c+o(1)}$, it suffices to show that
$\sum_{n\leq x}\theta_{W,b}(n)\xi(n)\geq 2C\delta_j\sum_{n\leq x}\xi(n)$.
But this follows from 
Propositions \ref{prop:chen-prime-nil} and \ref{prop:twin-prime-nil} assuming $x$ is large enough, and the diameter of $\mathcal{H}_2$ is smaller than $w$.

Therefore, the hypotheses of Theorem \ref{th:transference} are met, and applying this theorem yields equation \eqref{eq:hypoOfWtricked} and we are done.

%
%
%
Thus we obtain Theorem \ref{th:linEqTwinPrimes} with $C_i(\Psi)=\prod_p\beta_p(\Psi_{\mathcal{H}_i})$.
\end{proof}

\subsection{The case of primes of the form \texorpdfstring{$x^2+y^2+1$}{x2+y2+1}}

We now turn to the proof of Theorem~\ref{th: Linnik}. We shall be brief with the arguments at places, since for the most part they closely resemble those used to prove Theorem \ref{th:linEqTwinPrimes}.

Let $\theta_3(n)$ be the weighted indicator of primes of the form $x^2+y^2+1$ given by \eqref{equ21}. Also denote the set of sums of two squares by
\begin{align*}
S:=\{n\geq 1:\,\, n=x^2+y^2\,\, \textnormal{for some}\,\, x,y\in \mathbb{Z}\}.    
\end{align*}

We follow the proof strategy of Theorem \ref{th:linEqTwinPrimes}. Let $W:=6^3\prod_{3\leq p\leq w}p$. Define $\theta_{3,W,b}(n):=\left(W/\varphi(W)\right)^{3/2}\theta(Wn+b)$.

We first claim that Theorem~\ref{th: Linnik} follows if
\begin{align}\label{eq: theta3}
\sum_{\substack{\mathbf{n}\in 
\Z^d\\W\mathbf{n}+\mathbf{a}\in K}}\prod_{i=1}^t\theta_{3,W,\psi_i(\mathbf{a})}(\dot{\psi_i}(\mathbf{n}))\gg W^{-d}\Vol(K),   
\end{align}
for any convex body $K$ satisfying $\Psi(K)\subset [1,x]^d$, $\Vol(K)\gg x^d$ and for each $\mathbf{a}\in \mathcal{A}$, where $\mathcal{A}:=\{\mathbf{a}\in (\mathbb{Z}/W\mathbb{Z})^d:\, \forall\, i\in [t]\,\, Wn+\psi_i(\mathbf{a})\,\, \textnormal{amenable}\}$. The proof of this implication is essentially the same as for Proposition \ref{prop: wtrick}, i.e., we choose $K=K_{\mathbf{a}}$ as there and sum \eqref{eq: theta3} over all $\mathbf{a}\in \mathcal{A}$ and note that $|\mathcal{A}|\gg \prod_{p}\beta_p'W^d$ by the Chinese remainder theorem, where $$\beta_p':=\mathbb{E}_{\mathbf{a}\in (\mathbb{Z}/p\mathbb{Z})^d}\prod_{i\in [t]}(1-|A_p|/p)^{-1}1_{\psi_i(\mathbf{a})\not \in A_p\PMod p}$$ 
and $A_p=\{0,1\}$ for $p\equiv -1\PMod 4$ and $A_p=\{0\}$ otherwise. 

Thus, by applying Theorem \ref{th:transference}, it suffices to prove that the following hold for all fixed $w$ and  $1\leq b_i\leq W$ such that $Wn+b_i$ is amenable. 
\begin{enumerate}
    \item For any $M\geq 1,\alpha>0$, $t\geq 1$, there exist $0<c<1$ and an $(M,\alpha)$-pseudorandom measure $\nu_{\mathbf{b}}:\mathbb{Z}/x\mathbb{Z}\to \mathbb{R}_{\geq 0}$ such that $\theta_{W,b_i}(n)\ll_{M,t} \nu(n)$ whenever $i\in [t]$ and $n\in [x^{c},x]$.
    
    \item There exists an absolute constant $\delta_0>0$ such that,  for any $Y\geq 1$, $\varepsilon>0$ and $x\geq x_0(Y,\varepsilon)$ large enough, we have
\begin{equation*}
\sum_{n\leq x}\theta_{3,W,b_i}(n)\xi(n)\geq \delta_0 \sum_{n\leq x}\xi(n)
\end{equation*} 
whenever $\xi : \Z\rightarrow [0,1]$ is a nilsequence of complexity at most $Y$ satisfying $\sum_{n\leq x}\xi(n)\geq \varepsilon x$.
\end{enumerate}

\textbf{Proof of (1).} This follows from the work of 
Sun and Pan \cite{kAP-twoSquaresPlusOne} (Section 2 and in particular Proposition 2.1 there).

\textbf{Proof of (2).}
Let $Y$ and $\varepsilon$ be fixed in the statement of (2). For the proof of (2), it suffices to prove the following result, which is a direct analogue of Proposition \ref{prop:twin-prime-nil} in the case of bounded gap integers or of Proposition \ref{prop:chen-prime-nil}  in the case of Chen primes.

\begin{proposition}[Primes of the form $x^2+y^2+1$ with nilsequences]\label{prop:iwaniec-prime-nil}
Fix positive integers $d, \Delta$ and some  $\varepsilon>0$, $K \geq 2$. Also let $w\geq 1$ be sufficiently large in terms of $d,\Delta,\ee, K$ and $W=6^3\prod_{3\leq p\leq w}p$. The following statement holds for sufficiently large $x \geq x_0(d,\Delta,\varepsilon,K,w)$.

Let $\xi \in \Xi_d(\Delta,K)$ be a nilsequence taking values in $[0,1]$.
Then for some absolute constant $\delta_0>0$ and any $1\leq b\leq W$ such that $Wn+b$ is amenable, we have
\[ \sum_{\substack{n\leq x \\ Wn+b\in  \Prime\\ Wn+b-1\in S}} \xi(n) \geq \left(\frac{W}{\varphi(W)}\right)^{3/2} \frac{\delta_0}{(\log x)^{3/2}} \left( \sum_{n\leq x} \xi(n) - \varepsilon x \right). \]
\end{proposition}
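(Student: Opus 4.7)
\textbf{Proof plan for Proposition~\ref{prop:iwaniec-prime-nil}.}
The strategy parallels the proof of Proposition~\ref{prop:chen-prime-nil}. First, I would reduce to the case of an equidistributed nilsequence using the Green--Tao factorization theorem \cite[Theorem 1.19]{GT-nilsequence} in exactly the manner carried out for Propositions~\ref{prop:twin-prime-nil} and~\ref{prop:chen-prime-nil}: writing $g = \epsilon g' \gamma$ with $\epsilon$ smooth, $\gamma$ periodic of some period $q\leq M$, and $g'$ totally $M^{-B}$-equidistributed in a rational subgroup, then partitioning $[x]$ into arithmetic progressions of common difference $q$ on which $\gamma$ is constant, and conjugating to arrive at a polynomial sequence which is totally equidistributed. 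The local periodic contribution is absorbed into the singular series exactly as done for $\theta_1,\theta_2$ via an Iwaniec-style analogue of Lemma~\ref{lem:twim-prime-periodic}; using the fact that the form $L(n)=Wn+b$ is amenable, the resulting local factor matches the singular series~\eqref{singular_iwaniec}.

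Thus the task reduces to an equidistributed version: for $\xi\in\Xi_d^0(\Delta,K;\eta,x)$ with $\eta\leq K^{-C}(\log x)^{-C}$, and an amenable $L(n)=an+b$ with $1\leq a\leq (\log x)^{O(1)}$,
\[
\sum_{\substack{n\leq x\\ L(n)\in\Prime\\ p\mid L(n)-1\,\Rightarrow\, p\not\equiv -1\PMod 4}} \xi(n) \geq \delta_0\frac{\FS(L)}{(\log x)^{3/2}}\Bigl(\sum_{n\leq x}\xi(n)-\varepsilon x\Bigr).
\]
I would prove this by applying the weighted Iwaniec sieve (Proposition~\ref{prop: sumsoftwosquares}) with $\omega_n = \xi(n)$. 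Letting $\mu=\int_{G/\Gamma}F$ we may assume $\mu\geq\varepsilon/2$, and we split $\xi=\mu+\xi'$, where $\xi'\in\Xi_d^0(\Delta,K;\eta,x)$ has mean $O(x/(\log x)^B)$ by total equidistribution.

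The $\mu$-contribution to hypotheses (i) and (ii) of Proposition~\ref{prop: sumsoftwosquares} follows from the classical Bombieri--Vinogradov theorem (for hypothesis (ii), at level $x^{3/7-\varepsilon}$) and, for hypothesis (i), the Bombieri--Vinogradov theorem combined with the type II structure imposed by $g=\alpha*\beta$ (alternatively, \cite[Theorem 17.4]{IwKo04}). It remains to show that the $\xi'$-contribution is $\ll x/(\log x)^{100}$. For hypothesis (i), the weight $\lambda_r^{+,\mathrm{LIN}}$ is well-factorable of level $x^{1/2-\varepsilon}$ and the inner sum over $(\ell,p)$ with $L(n)=\ell p+1$ and $L(n)\equiv 0\PMod r$ is a type II sum in two variables whose size ranges fit the hypotheses of Lemma~\ref{typeII-wellfactorable}; substituting the convolution $g=\alpha*\beta$ and interchanging the order of summation converts the problem into a well-factorable type II estimate, which Lemma~\ref{typeII-wellfactorable} (applied with $L$ replaced by a shifted linear form) yields directly. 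The $\mu$-main term in (i) matches, after partial summation, the arithmetic main term on the right-hand side of~\eqref{eq30}.

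The principal obstacle is hypothesis (ii), where the weights $\lambda_r^{-,\mathrm{SEM}}$ are semilinear sieve coefficients and are not themselves well-factorable. Here I would apply Heath--Brown's combinatorial identity to decompose $\Lambda(L(n))$, at level $x^{3/7-\varepsilon}$, into a bounded number of type I sums of length $\leq x^{1/2-\varepsilon}$ and type II sums with $M\in[x^{1/3-\varepsilon},x^{1/2+\varepsilon}]$. The type I pieces are controlled by Lemma~\ref{typeI-BV} applied with $D=x^{3/7-\varepsilon}$ (so $DM\leq x^{1-\varepsilon}$). For the type II pieces, since $|\lambda_r^{-,\mathrm{SEM}}|\leq 1$ and $\lambda_r^{-,\mathrm{SEM}}$ is supported on integers of the Iwaniec--Rosser form, one may express the sieve weights (by a standard Iwaniec-type decomposition) as a bounded sum of products of the shape (well-factorable of level $\leq x^{3/7-\varepsilon}$)$\times$(indicator of a prime in a dyadic range), whereupon Lemma~\ref{typeII-partially-wellfactorable} applies at the relevant ranges of $R_1,R_2,M$. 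Combining the type I and type II bounds gives the required saving $\ll x/(\log x)^{100}$ for the $\xi'$-contribution to hypothesis (ii). With both hypotheses verified, Proposition~\ref{prop: sumsoftwosquares} yields the equidistributed statement, and the reduction from Step~1 then completes the proof.
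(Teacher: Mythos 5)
The overall scheme is the paper's: reduce to the equidistributed case via the Green--Tao factorization theorem and an Iwaniec-analogue of Lemma~\ref{lem:twim-prime-periodic}, then apply Proposition~\ref{prop: sumsoftwosquares} with $\omega_n = \xi(n)$, splitting $\xi = \mu + \xi'$ and handling the $\mu$-contributions by classical Bombieri--Vinogradov. Your treatment of hypothesis~(i) matches the paper (replace $\lambda_r^{+,\mathrm{LIN}}$ by a bounded combination of well-factorable sequences via \cite[Corollary 12.17]{Cribro}, then apply Lemma~\ref{typeII-wellfactorable}).

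There is, however, a genuine gap in your handling of hypothesis~(ii). You assert that $\lambda_r^{-,\mathrm{SEM}}$ can be written as a bounded sum of products of the shape \emph{(well-factorable of level $\leq x^{3/7-\varepsilon}$)}$\times$\emph{(indicator of a prime in a dyadic range)}. No such decomposition exists for the semilinear sieve: well-factorability and the Friedlander--Iwaniec decomposition into bounded combinations of well-factorable weights are specific to the linear (dimension-one) Rosser--Iwaniec sieve and fail in the half-dimensional case. What the paper actually uses is merely a \emph{partial factorization in upper-bound form} from \cite[Lemma 9.2, formulas (10.3), (10.4)]{joni}: one has
\[
|\lambda_r^{-,\mathrm{SEM}}| \ll (\log x)^2 \max_{R_1,R_2} \sum_{\substack{r = r_1 r_2 \\ r_i \in [R_i, 2R_i]}} 1
\]
subject to the constraints $R_1 \leq x^{1-\varepsilon}/M$, $R_1 R_2^2 \leq M x^{-\varepsilon}$, $R_1 R_2 \leq x^{3/7-\varepsilon/2}$, after which Lemma~\ref{typeII-partially-wellfactorable} is applied to the resulting bilinear sum over $r_1, r_2$ — not to a well-factorable weight. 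The paper also first disposes of the easy subrange $M \leq x^{4/7}$ by taking $R_1 = x^{3/7-\varepsilon}$, $R_2 = 1$ with no factorization at all, so the partial factorization is only invoked for $x^{4/7} \leq M \ll x^{2/3}$. Your proof needs to be repaired by replacing the claimed factorization with this partial-factorization bound.

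A minor, non-critical difference: you decompose $\Lambda$ by Heath--Brown's identity, whereas the paper uses Vaughan's identity (type~I: $M \ll x^{1/3}$; type~II: $x^{1/2} \ll M \ll x^{2/3}$). Either works, but the type~II ranges you quote still include $M$ (after the standard $m\leftrightarrow n$ switch in Lemma~\ref{typeII-partially-wellfactorable}) beyond $x^{4/7}$, so the partial factorization of the semilinear sieve cannot be avoided in your variant either.
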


By arguments similar to those in Section \ref{sec:twin-prime-bohr} (with Lemma \ref{lem:twim-prime-periodic} slightly adjusted to handle the local problem in our setting), we can reduce this to the equidistributed case.

\begin{proposition}[Primes of the form $x^2+y^2+1$ weighted by equidistributed nilsequences]\label{prop:iwaniec-eq-nil}
Fix positive integers $d, \Delta$ and some  $\varepsilon>0$, $A \geq 2$. There exists $C = C(d,\Delta) > 0$, such that the following statement holds for sufficiently large $x \geq x_0(d,\Delta,\varepsilon,A)$.

Let $K \geq 2$ and $\eta \in (0,1/2)$ be parameters satisfying the conditions
\[ \eta \leq K^{-C} (\log x)^{-CA}, \ \ K \leq (\log x)^C. \]
Let $\xi \in \Xi_d(\Delta, K; \eta, x)$ be a nilsequence taking values in $[0,1]$.  Let $L(n) = an+b$ be an amenable linear function, where $1 \leq a\leq \log x$, $|b|\leq x$. Then for some absolute constant $\delta_0>0$ we have
\begin{align}\label{eq: iwanieclower} \sum_{\substack{n\leq x \\ L(n)\in  \Prime\\ L(n)-1\in S}} \xi(n) \geq  \delta_0 \frac{\FS(L)}{(\log x)^{3/2}} \left( \sum_{n\leq x} \xi(n) - \varepsilon x \right), \end{align}
where the singular series is given by \eqref{singular_iwaniec}.
\end{proposition}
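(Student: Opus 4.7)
The plan is to mimic the proof of Proposition~\ref{prop:chen-prime-eq-nil}, replacing Chen's sieve with Iwaniec's sieve (Proposition~\ref{prop: sumsoftwosquares}). Write $\xi(n) = F(g(n)\Gamma)$ and set $\mu = \int_{G/\Gamma} F$. Total $\eta$-equidistribution of $g$ gives
\[ \Big|\sum_{n \leq x}(\xi(n) - \mu)\Big| \ll x/(\log x)^B \]
for any fixed $B$, provided $C = C(d, \Delta, B)$ is chosen large enough. We may assume $\mu \geq \varepsilon/2$, as otherwise \eqref{eq: iwanieclower} is trivial. The plan is then to apply Proposition~\ref{prop: sumsoftwosquares} with weights $\omega_n = \xi(n)$: its conclusion \eqref{eq26}, combined with the elementary fact that any positive integer whose prime factors are all $\not\equiv -1 \PMod 4$ lies in $S$, yields \eqref{eq: iwanieclower} (with the $\varepsilon x$ slack absorbing the errors $x/(\log x)^B$ and $O(x^{1/2})$) once hypotheses (i) and (ii) of Proposition~\ref{prop: sumsoftwosquares} are verified.

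Hypothesis (ii) is the easier one. Writing $\xi = \xi' + \mu$ with $\xi' = \xi - \mu$, the $\mu$-contribution is the main term by the classical Bombieri--Vinogradov theorem and the prime number theorem in arithmetic progressions. For the $\xi'$-contribution, after partial summation to replace $1_{\mathbb{P}}(L(n))$ by $\Lambda(L(n))$ weighted by $1/\log(Kx)$, matters reduce to
\[ \Big|\sum_{\substack{r \leq x^{3/7 - \varepsilon} \\ (r, K) = 1}} \lambda_r^{-, \mathrm{SEM}} \sum_{\substack{n \leq x \\ L(n) \equiv 1 \PMod r}} \Lambda(L(n)) \xi'(n) \Big| \ll x/(\log x)^{100}. \]
Since $\lambda_r^{-, \mathrm{SEM}}$ is well-factorable of level $x^{3/7 - \varepsilon} \leq x^{1/2 - \varepsilon}$, this is a direct application of Theorem~\ref{equidist-1/2}.

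Hypothesis (i) is the substantive one. Again decompose $\xi = \xi' + \mu$; the $\mu$-contribution yields the main term by Iwaniec's classical estimates in the form established in \cite{joni}. For the $\xi'$-contribution, after partial summation we must bound
\[ \Big|\sum_{\substack{r \leq x^{1/2 - \varepsilon} \\ (r, K) = 1}} \lambda_r^{+, \mathrm{LIN}} \sum_{\substack{\ell \leq x^{0.9} \\ (\ell, K) = \delta,\, (\ell, r) = 1}} g(\ell) \sum_{\substack{n \leq x,\, p\,\mathrm{prime} \\ L(n) = \ell p + 1 \\ L(n) \equiv 0 \PMod r}} \Lambda(p) \xi'(n) \Big| \ll x/(\log x)^{100}. \]
Setting $L'(n) := L(n) - 1$, the constraint $L(n) = \ell p + 1$ reads $L'(n) = \ell p$. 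Decomposing $g = \alpha * \beta$ as in the hypothesis, with $\alpha$ supported in $[x^{1/(3 + \varepsilon)}, x^{1 - 1/(3 + \varepsilon)}] \subset [x^{1/4}, x^{3/4}]$, we obtain a two-variable multiplicative factorization $L'(n) = \ell_1 \cdot (\ell_2 p)$ in which $\ell_1$ lies in the type II range and the coefficients of the second factor are bounded by $d_2$. After extracting the factor $\delta = (K, b - 1)$ from $L'$ to reduce to a primitive linear form (possible because amenability and $(r, K) = 1$ guarantee $(r, \delta) = 1$) and dyadically decomposing $\ell_1$, we apply Lemma~\ref{typeII-wellfactorable} (with the outer well-factorable weight $\lambda_r^{+, \mathrm{LIN}}$ of level $x^{1/2 - \varepsilon}$) to obtain the desired bound.

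The main obstacle is hypothesis (i): one must recognize the bilinear structure in the multiplicative factorization of $L(n) - 1$ rather than of $L(n)$, and reconcile it with the well-factorable type II framework of Lemma~\ref{typeII-wellfactorable}, which requires some bookkeeping around the coprimality of coefficients and the extraction of $\delta$. The remaining ingredients are relatively standard applications of the results in Sections~\ref{sec:sieve} and~\ref{sec:BV}.
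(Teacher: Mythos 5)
Your overall strategy (apply Proposition~\ref{prop: sumsoftwosquares} to $\omega_n=\xi(n)$, split $\xi=\xi'+\mu$, and verify hypotheses (i)--(ii)) is the same as the paper's, and your treatment of hypothesis (i) --- bilinear decomposition via $g=\alpha*\beta$, merging the variables to expose a type II structure for $L(n)-1$, extracting $\delta=(b-1,K)$, and applying Lemma~\ref{typeII-wellfactorable} with the well-factorable linear sieve weights --- is essentially the paper's argument (the paper does not bother introducing $\Lambda(p)$ via partial summation, but that is inessential).

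However, your treatment of hypothesis (ii) contains a genuine gap. You assert that $\lambda_r^{-,\mathrm{SEM}}$ is well-factorable of level $x^{3/7-\varepsilon}$, so that Theorem~\ref{equidist-1/2} applies directly. That is not true: the well-factorable refinement of sieve weights (Iwaniec, \cite[Corollary 12.17]{Cribro}) is a theorem about the \emph{linear} sieve, and no such unconditional well-factorability is known for the semilinear (half-dimensional) sieve coefficients. This is precisely what makes hypothesis (ii) nontrivial. The paper circumvents it by applying Vaughan's identity to $\Lambda$, producing type I sums (handled via the trivial bound $|\lambda_r^{-,\mathrm{SEM}}|\leq 1$, Cauchy--Schwarz, and Lemma~\ref{typeI-BV}) and type II sums with a bilinear variable $m\in[M,2M]$, $x^{1/2}\ll M\ll x^{2/3}$. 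For $M\leq x^{4/7}$ the low level $x^{3/7-\varepsilon}$ lets one apply Lemma~\ref{typeII-partially-wellfactorable} with $R_1=x^{3/7-\varepsilon}$, $R_2=1$; for $x^{4/7}\leq M\ll x^{2/3}$ one needs the \emph{partial} factorization of the semilinear sieve weights from \cite[Lemma 9.2, formulas (10.3), (10.4)]{joni}, which decomposes $\lambda_r^{-,\mathrm{SEM}}$ into bidyadic pieces $r=r_1r_2$ with $(R_1,R_2)$ satisfying the constraints of Lemma~\ref{typeII-partially-wellfactorable}. Without this step your proof of hypothesis (ii) does not go through.
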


The remaining task is then to prove this proposition.

\begin{proof}[Proof of Proposition \ref{prop:iwaniec-eq-nil}]
We may assume that $\varepsilon>0$ is fixed, since $x$ is large enough in terms of $\varepsilon$. 

We apply Proposition~\ref{prop: sumsoftwosquares}. Thus, in order to obtain \eqref{eq: iwanieclower}, it suffices to verify  hypotheses (i)--(ii) of Proposition~\ref{prop: sumsoftwosquares} for $\omega_n=\xi(n)$ in order to obtain the claim. Since $\sum_{n\leq x}\omega_n\geq \varepsilon x\gg x$, it in fact suffices to verify versions of hypotheses (i)--(ii) where $(\sum_{n\leq x}\omega_n)/(\log x)^{100}$ is replaced with $x/(\log x)^{100}$ on the right-hand side of the inequalities \eqref{eq30}, \eqref{eq31}. 

Write $\xi(n)=\xi'(n)+\mu$, where $\mu=\int_{G/\Gamma}F$. Observing that hypotheses (i)--(ii) hold for constant sequences (in the case of (i) by a bilinear Bombieri--Vinogradov type estimate \cite[Theorem 17.4]{IwKo04} and in the case of (ii) by the classical Bombieri--Vinogradov theorem), it suffices to verify hypothesis (i)--(ii) (with $x/(\log x)^{100}$ on the right-hand side of \eqref{eq30}, \eqref{eq31}) for $\xi'(n)$, which belongs to $\Xi^0_d(\Delta, K; \eta, x)$ with $\eta\leq (\log x)^{-CA}$ for a large constant $C$.

\textbf{Verifying hypothesis (i).} Let $\omega_n=\xi'(n)$. First note that by partial summation and the fact that  $\xi'\in \Xi^0_d(\Delta, K; \eta, x)$, we have
\begin{align}\label{eq103}
\Big|\sum_{n\leq x}\frac{\xi'(n)}{\log(yn)}\Big|\ll \frac{x}{(\log x)^{300}},
\end{align}
say, uniformly for $x^{-0.99}\leq y\leq x$. Hence, recalling the definition of $g(\ell)$ in hypothesis (i), and denoting $u=(b-1,a)$, our task is to show that
\begin{align*}
\Big|\sum_{\substack{r\leq x^{1/2-\varepsilon}\\(r,a)=1}}\lambda_r^{+,\textnormal{LIN}}\sum_{\substack{x^{1/(3+\varepsilon)}\leq \ell_1\leq x^{1/(3+\varepsilon)}\\ \ell_2\leq x^{0.9-1/(3+\varepsilon)}\\(\ell_1\ell_2,a)=u\\(\ell_1\ell_2,r)=1}}\alpha(\ell_1)\beta(\ell_2)\Big(\sum_{\substack{n\leq x\\p\leq x\\L(n)=\ell_1\ell_2 p+1\\L(n)\equiv 0 \PMod r}}\xi'(n)\Big)\Big|&\ll \frac{x}{(\log x)^{100}},
\end{align*}
uniformly for $|\alpha(n)|,|\beta(n)|\leq 1$. Merging the variables $\ell_2$ and $p$ as $m=\ell_2 p$, it suffices to show that
\begin{align*}
&\Big|\sum_{\substack{r\leq x^{1/2-\varepsilon}\\(r,a)=1}}\lambda_r^{+,\textnormal{LIN}}\sum_{\ell_1\in I}\alpha(\ell_1)1_{(\ell_1,r)=1,(\ell_1,a)=u_1}b(m)1_{(m,r)=1,(m,a)=u_2}\Big(\sum_{\substack{n\leq x\\L(n)=\ell_1m+1\\L(n)\equiv 0 \PMod r}}\xi'(n)\Big)\Big|\\
&\ll \frac{x}{(\log x)^{100}},
\end{align*}
uniformly for $1\leq u_1,u_2\leq u$ and $|\alpha(n)|,|b(n)|\leq d_2(n)$, where for brevity we have denoted $I:=[x^{1/(3+\varepsilon)},x^{1-1/(3+\varepsilon)}]$. We make a linear change of variables in the inner sum over $n$ to reduce to
\begin{align*}
&\sum_{\substack{r\leq x^{1/2-\varepsilon}\\(r,a)=1}}|\lambda_r^{+,\textnormal{LIN}}|\Big|\sum_{\ell_1\in I}\alpha(\ell_1)1_{(\ell_1,r)=1,(\ell,a)=u_1}b(m)1_{(m,r)=1,(m,a)=u_2}\Big(\sum_{\substack{\ell_1m\leq L(x)\\\ell_1m\equiv 1 \PMod r\\\ell_1 m\equiv b-1\PMod a}}\xi'(\tfrac{\ell_1 m+1-b}{a})\Big)\Big|\\
&\ll \frac{x}{(\log x)^{101}}.
\end{align*}

We can replace the sequence $\lambda_{r}^{+, \textnormal{LIN}}$ above with a well-factorable sequence using \cite[Corollary 12.17]{Cribro}, which splits the linear sieve coefficients into a linear combination of boundedly many well-factorable sequences. Now the claimed estimate follows directly from the well-factorable type II estimate given by Lemma \ref{typeII-wellfactorable}.

\textbf{Verifying hypothesis (ii).} Let $\omega_n=\xi'(n)$. Again applying \eqref{eq103}, we reduce to
\begin{align*}
\Big|\sum_{\substack{r\leq x^{3/7-\varepsilon}\\ (r,a)=1}}\lambda_r^{-,\textnormal{SEM}}\Big(\sum_{\substack{n\leq x\\L(n)\in \mathbb{P}\\L(n)\equiv 1 \PMod r}}\xi'(n)\Big)\Big|&\ll \frac{x}{(\log x)^{100}}.    
\end{align*}
Making the change of variables $n'=L(n)$, this is equivalent to
\begin{align*}
\Big|\sum_{\substack{r\leq x^{3/7-\varepsilon}\\(r,a)=1}}\lambda_r^{-,\textnormal{SEM}}\Big(\sum_{\substack{n\leq L(x)\\n\in \mathbb{P}\\n\equiv 1\PMod{r}\\n\equiv b\PMod a}}\xi'(L^{-1}(n))\Big)\Big|\ll \frac{x}{(\log x)^{100}}.    
\end{align*} 
Applying partial summation, it suffices to show 
\begin{align*}
\Big|\sum_{\substack{r\leq x^{3/7-\varepsilon}\\(r,a)=1}}\lambda_r^{-,\textnormal{SEM}}\Big(\sum_{\substack{n\leq y\\n\equiv 1\PMod{r}\\n\equiv b\PMod a}}\Lambda(n)\xi'(L^{-1}(n))\Big)\Big|\ll \frac{x}{(\log x)^{100}},    
\end{align*} 
uniformly for $1\leq y\leq L(x)$. By Vaughan's identity, we can write the von Mangoldt function as a sum of $\ll (\log x)^{10}$ convolutions $a*b(n)$, where $|a(n)|,|b(n)|\leq (\log n)d_2(n)$ and $\textnormal{supp}(a)\subset [M,2M]$ and one of the following holds:
\begin{enumerate}
    \item $M\ll x^{1/3}$ and $b(n)\equiv 1$ or $b(n)\equiv \log n$ (type I case);
    \item $x^{1/2}\ll M\ll x^{2/3}$ (type II case).
\end{enumerate}
Thus, we reduce to proving that
\begin{align*}
\Big|\sum_{\substack{r\leq x^{3/7-\varepsilon}\\(r,a)=1}}\lambda_r^{-,\textnormal{SEM}}\Big(\sum_{\substack{mn\leq L(x)\\mn\equiv 1\PMod{r}\\mn\equiv b\PMod a}}a(m)b(n)\xi'(L^{-1}(mn))\Big)\Big|\ll \frac{x}{(\log x)^{110}}.    
\end{align*} 
In the type I case, we can in fact assume that $b(n)\equiv 1$ by applying partial summation. Now, to handle the type I sums,
we can apply the bound $|\lambda_{r}^{-,\textnormal{SEM}}|\leq 1$, followed by Cauchy--Schwarz to dispose of the $a(m)$ coefficients (this loses a factor of $(\log x)^{10}$, say, so for the resulting sum we need a bound of $x/(\log x)^{120}$). To the resulting sum we can then apply Lemma~\ref{typeI-BV} to obtain the desired conclusion. 

We then turn to the type II sums. If $M\leq x^{4/7}$, we can directly apply Lemma \ref{typeII-partially-wellfactorable} with $R_1=x^{3/7-\varepsilon}$, $R_2=1$, since then $R_1\leq x^{1-\varepsilon}/M$, $R_1R_2^2\leq Mx^{-\varepsilon}$. Hence, we may assume for now on that $x^{4/7}\leq M\ll x^{2/3}$.

We now apply a partial factorization of the lower bound semilinear sieve weights from
\cite[Lemma 9.2, formulas (10.3), (10.4)]{joni} (taking $\theta=\varepsilon/2$ and replacing $\varepsilon$ with $\varepsilon/10$ in those formulas). Since $x^{1/3-\varepsilon}\ll x^{1-\varepsilon}/M\ll x^{3/7-\varepsilon}$, we conclude that 
\begin{align}\label{eq104}
|\lambda_{r}^{-,\textnormal{SEM}}|\ll (\log x)^2 \max_{R_1,R_2}\sum_{\substack{r=r_1r_2\\r_1\in [R_1,2R_1]\\r_2\in [R_2,2R_2]}}1,    
\end{align}
where the maximum is over those $(R_1,R_2)\in \mathbb{R}_{\geq 1}^2$ satisfying
\begin{align}\label{eq105}
R_1\leq x^{1-\varepsilon}/M,\quad R_1R_2^2\leq Mx^{-\varepsilon},\quad R_1R_2\leq x^{3/7-\varepsilon/2}.    
\end{align}
Hence, applying \eqref{eq104}, the remaining task is to show that
\begin{align*}
\sum_{\substack{R_1\leq r_1\leq 2R_1\\R_2\leq r_2\leq 2R_2}}\Big|\sum_{\substack{mn\leq L(x)\\mn\equiv 1\PMod{r_1r_2}\\mn\equiv b\PMod a}}a(m)b(n)\xi'(L^{-1}(mn))\Big|\ll \frac{x}{(\log x)^{200}}  
\end{align*}
under the constraints \eqref{eq105}. Since the constraints on $R_1,R_2$ are precisely as in Lemma \ref{typeII-partially-wellfactorable}, we may appeal to that lemma to conclude. This completes the verification of hypothesis (i)--(ii), and hence the proof of Theorem \ref{th: Linnik}. 
\end{proof}

\end{document}